\newtheorem{theo}{Theorem}[section]
\newtheorem{lem}[theo]{Lemma}
\newtheorem{cor}[theo]{Corollary}
\theoremstyle{definition}
\newtheorem{defin}[theo]{Definition}
\newtheorem{exa}{Example}[section]
\theoremstyle{remark}
\newtheorem{rem}[theo]{Remark}
\title{Examples of stable-like random walks on groups of polynomial growth}
\author{Laurent Saloff-Coste and Ruoqi Zhang \thanks{both authors' research was partially supported by NSF grants DMS-2054593 and DMS-234386.}}
\begin{document}

\maketitle

\begin{abstract}
We consider several  families of long jump random walks on groups of polynomial volume growth which are naturally expected to have a stable-like behavior. We then prove optimal pseudo-Poincar\'e inequalities for these walks. These pseudo-Poincar\'e inequalities allow us to show that the random walks in questions indeed have a stable-like behavior and to obtain detailed estimates.
\end{abstract}

\section{Introduction}
This work is concerned with the exploration of stable-like random walks on certain non-commutative groups, in particular, nilpotent groups and groups of polynomial volume growth. The word ``exploration'' is used here because there is no standard definition of what ``stable-like'' means and, consequently, we will focus in providing classes of  natural examples in search of a more formal theory. Taking a  wider viewpoint, we focus on the question of understanding what features of a particular driving probability measure  determine the basic behavior of the associated random walk. Borrowing from classical probability theory, we start with the following question: what features of  a driving probability measure produce a stable-like behavior? In the remaining part of this introduction, we explain why this question is a natural intermediate step. For a general introduction to random walks on groups, see \cite{Lalley,SCnotices,Woess} among other references.

\subsection{The classical case in a nutshell} Even in the context of $\mathbb Z^d$ and $\mathbb R^d$, there is no such things as ``stable'' discrete random walks although specialists would certainly be able to suggest some possible definitions. See, e.g.,  \cite{MS1} for background information,

A self-similar stochastic process (on $\mathbb R$) is a continuous time stochastic process $(X_t)_{t\ge 0}$ having the property  that $$(X_{st})_{t\ge 0} \mbox{ is equal in distribution to }(s^{1/\alpha} X_t)_{t\ge 0},$$ for some $\alpha>0$. 

In the context of Levy processes (processes with independent time homogeneous increments),  a process $(X_t)_{t> 0}$ (started at $0$) is stable if the characteristic function $\hat{\mu}_1$ of $X_1$ has the property that
for all $a>0$ there are $b>0$ and $c\in \mathbb R$ such that
\begin{equation}\forall\, y,\;\;(\hat{\mu}(y))^a=\hat{\mu}(by)e^{icy}.\label{stable}\end{equation}
When this equality holds, it must be that $b(a)=a^{1/\alpha}$ with $\alpha\in (0,2]$. The parameter $\alpha$ is called the index of stability and the process is called $\alpha$-stable. One also says that $X_1$ is an $\alpha$-stable random variable or that its law, $\mu_1$,  is $\alpha$-stable. 

Self-similar Levy processes with $X_0=0$  are exactly the stable processes. More precisely, they are Brownian motion with drift if $\alpha=2$, and 
stable jump processes if $\alpha\in (0,2)$. In the present work, we are mostly interested in symmetric processes (on $\mathbb R$ or $\mathbb R^d$, this means that $X_t$ is equal in law to $-X_t$) and, in this case, the law $\mu_t$ of $X_t$ has characteristic function $\hat{\mu}_t(y)=e^{\kappa |y|^\alpha}$. Unfortunately, in general, the density of $\mu_t$ does not have a simple explicit form, except in the cases when $\alpha=2$
(the Gaussian case) where, assuming as we may that $\kappa=1$, $$\frac{d\mu_t}{dx}(x)=(4\pi t)^{-1/2}e^{-|x|^2/4 t},$$ 
and also in the case $\alpha=1$ (Cauchy distribution) where  $$\frac{d\mu_t}{dx}(x)=\frac{1}{\pi }\frac{ t}{t^2+x^2}.$$
Even though there is no explicit formula  when $\alpha$ is in $ (0,2)\setminus \{1\}$, one can show that 
$$  \frac{c_\alpha t }{( t+|x|^\alpha)^{1+1/\alpha} }\le \frac{d\mu_t}{dx}(x) \le   \frac{ C_\alpha t }{( t+|x|^\alpha)^{1+1/\alpha}} .$$ 

Note that self-similarity appears naturally when considering the possible limits $Z$ of 
sums of independent equidistributed (centered) random variables $(X_i)_1^\infty$ after rescaling, 
$$n^{-1/\alpha}\left(\sum_1^nX_i\right) \Longrightarrow Z,$$
because these eventual limits must be fixed points for the rescaling method used to obtain them. It follows that, in the case of symmetric random variables, the possible non-zero limits are exactly the symmetric stable laws (the rescaling index $\alpha$ must be in $(0,2]$ and is also the stability index of the limit). The cases $\alpha=2$ and $\alpha\in (0,2)$ correspond to drastically different behaviors of the corresponding processes:  path continuity in the case $\alpha=2$ and jumps in the case of $\alpha\in (0,2)$. For clarity, let us explicitly agree to separate the case $\alpha=2$ (the Brownian case) from the case $\alpha\in (0,2)$. From now on, in this work, ``stable'' and ``stable-like'' both will refer to the case $\alpha\in (0,2)$.

Now, consider a symmetric probability measure $\nu$ on the integer group  $\mathbb Z$ with generating support.  What features of $\nu$ would make it ``stable-like''?  Classical probability theory (the central limit theorem), tells us that if $\nu$ is  finitely supported then it is ``Brownian-like,'' and that remains true as long as $\nu$ has a finite second moment
$\sum_{z\in \mathbb Z}|z|^2\nu(z)<+\infty$.   For $m\ge 1$, set
\begin{equation}\label{defKG}
\mathcal K(m)=m^{-2}\sum_{|z|\le m}|z|^2\nu(z),\;\; \mathcal G(m)=\sum_{|z|\ge m}\nu(z).\end{equation}
In 1984, Griffin, Jain and Pruitt \cite{GJP} proved that  if  $\nu$ is symmetric aperiodic and 
\begin{equation}\label{GJP}\liminf_{m\to \infty} \mathcal K(m)/\mathcal G(m)>0\end{equation}
then
\begin{equation}\nu^{(n)}(0)\asymp 1/a_n \label{est}\end{equation}
where $a_n$ is defined by $\mathcal Q(a_n)=1/n$ with $\mathcal Q=\mathcal K+\mathcal G$.  Here $f\asymp g$ means that there are constant $c_1,c_2$ such that $c_1f\le g\le c_2f$ on the adequate domain of definition of $f$ and $g$.  We use $\succcurlyeq$ and $\preccurlyeq$ for the associated order relations.

This result suggests that it may be reasonable to call a symmetric probability measure $\nu$ on $\mathbb Z$  ``$\alpha$-stable-like'' when (\ref{GJP}) holds and $a_n\asymp n^{1/\alpha}$ for some $\alpha\in (0,2)$.  A typical example is
$$\nu(z)=\frac{c_\alpha}{(1+|z|)^{1+\alpha}}, \;\;z\in \mathbb Z.$$
Note that, although the proofs in \cite{GJP} use Fourier transform/characteristic function techniques, the statement above (i.e., (\ref{GJP}-(\ref{est}))does not refer to characteristic functions. This is essential for our purpose.  Griffin \cite{Griffin1986} gives a remarkable extension of these ideas to $\mathbb R^d$-valued random variables which shows that the situation is  more complicated in this case, as one is lead to realize that much richer rescaling methods must be considered in higher dimension.  Namely, the multiplication by $b>0$
in (\ref{stable}) should now be replaced by the action of an element $B$ of $GL_n$ (an invertible linear map) and $cy$ should be replaced by scalar product $c\cdot y$ of a vector $c$ with the vector $y\in \mathbb R^d$.  Then (\ref{stable}) is equivalent to say that the probability measure $\mu$ on $\mathbb R^d$ is embedded in a convolution semigroup of probability measure $\mu^t$ (i.e,  $\mu=\mu^1$) and $\mu_t= B_t(\mu)*\delta_{c_t}$ for all $t>0$. Moreover, $B_t=t^E=\exp(E\log t)$ for some endomorphism $E$. For instance, $E$ could be given in a linear basis $(e_1,\dots, e_d)$ by $Ee_i=(1/\alpha_i)e_i$, $1\le i\le d$. In this case, the $\alpha_i$ must belong to $(0,2]$ and they can be interpreted as directional stability indexes for the measure $\mu$. Several books have been written on the subject of operator-stable distributions but Griffin's article \cite{Griffin1986} is a rare instance of a work that consider the following problem: Given a probability measure $\mu$ which is suspected to be operator-stable (or operator-stable-like), how does one go about finding the matrix $E$? (i.e., loosely speaking, the indices of stability in different directions; in reality, in full generality, operator-stability may also involves rotations). What features of $\mu$ should we explore in order to guess at $E$?
An interesting recent work in this direction is \cite{MM}. In \cite{Griffin1986} and in \cite{MM}, the problem is presented in a less naive form as finding the correct matrix normalization for partial sums of iid random variables, or for Levy processes.

\subsection{Random walks on groups}
  Let us switch gears and jump from the background considerations above to the problem we want to consider here, random walks with stable-like behavior on finitely generated groups. Let $G$ be a finitely generated group with identity element $e$.  For any probability measure $\mu$ on $G$, the random walk driven by $\mu$ is the $G$-valued left-invariant Markov process $(X_i)_{i\ge 0}$ obtained by considering a starting point $X_0=x\in G$, a sequence of independent $G$-valued random variables $(\xi_i)_1^\infty$ with common distribution $\mu$, and setting
$$X_0=x, \;\;X_j=x\xi_1\dots \xi_j, j\ge 1.$$
In other words, $X_0=x$ and $X_{i+1}=X_i\xi_{i+1}$, $i\ge 0$.  
This is, in an  obvious way, a generalization of partial sums of iid sequences of vector-valued random variables. In this work we focus mostly on the case when $\mu$ as the symmetry property $\mu(x)=\mu(x^{-1})$. 

What should we ask first if we want to understand the behavior of such a process?  One reasonable answer to this question is to focus on ``the probability of return.'' In technical terms, this is the function $\Phi_\mu: \{0,1,2,3,\dots\}\to [0,1]$ given by
$$\Phi_\mu(n)=\mathbf P_e(X_{2n}=e).$$
The restriction to even times is justified by the fact that, under the symmetry hypothesis on $\mu$, $n\mapsto \Phi_\mu$
is a non-increasing function.  Assuming that the support of $\mu$ generates an infinite subgroup of $G$, $\lim_{n\to +\infty}\Phi_\mu(n)=0$.

Now, something remarkable happens \cite{PSCstab}.  Equip the finitely generated group $G$ with a symmetric finite generating set $\Sigma$
and the associated word-length $|g|$, the minimal length $k$ of a sequence $\sigma_i\in \Sigma$, $1\le i\le k$, such that $g=\sigma_1\sigma_2\dots \sigma_k$. We say that a probability measure $\mu$ has finite second moment if $\sum_{g\in G}|g|^2\mu(g)<+\infty$.
Let us introduce the equivalence relation $\simeq$ between positive monotone functions. Write  $f\simeq g$ when there exist  positive constants $c_i$, $1\le i\le 4$, for which $f(t)\le c_1 g(c_2t)$ and $g(t)\le c_3f(c_4t)$
on the (common) domain of definition of $f,g$. We use $\gtrsim$ and $\lesssim$ for the associated order relations. Please, note the differences between the equivalence relations $\asymp$ (constants outside the functions) and $\simeq$ (constants both inside and outside the functions).
\begin{theo} \label{PSC}There exists a function $\Phi_G:  \{0,1,2,3,\dots\}\to [0,1]$ such that, for any symmetric probability measure $\mu$ with generating support and finite second moment,  $$\Phi_\mu\simeq \Phi_G.$$
That is, 
there exists  positive constants $c_i$, $1\le i\le 4$, depending on $\mu$, for which$$\forall\, n,\;\;   \Phi_\mu(n)\le c_1\Phi_G(c_2n) \mbox{ and } \Phi_G(n)\le c_3\Phi_\mu(c_4n).$$
\end{theo}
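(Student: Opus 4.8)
The plan is to compare every admissible $\mu$ against a single fixed reference measure. Fix the symmetric generating set $\Sigma$ and take $\mu_0=\tfrac12\delta_e+\tfrac1{2|\Sigma|}\sum_{s\in\Sigma}\delta_s$; this $\mu_0$ is symmetric, finitely supported with generating support, has finite second moment, and its convolution operator $P_{\mu_0}$ is non-negative. I would \emph{define} $\Phi_G:=\Phi_{\mu_0}$, so that the theorem reduces to proving $\Phi_\mu\simeq\Phi_{\mu_0}$ for every symmetric $\mu$ with generating support and finite second moment. As a first routine step, I would reduce to the case where the convolution operator $P_\mu$ (convolution by $\mu$ on $\ell^2(G)$) is non-negative — e.g.\ by passing from $\mu$ to $\mu*\mu$, using $\Phi_{\mu*\mu}(n)=\Phi_\mu(2n)\simeq\Phi_\mu(n)$; this is standard, though it may replace $G$ by a subgroup of index at most two (see \cite{PSCstab}). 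Once $P_\mu\ge 0$, spectral calculus shows $\Phi_\mu(n)$ is comparable, up to $\simeq$, to the on-diagonal heat kernel $h^\mu_t(e,e)$ of the continuous-time semigroup $e^{-t(I-P_\mu)}$.

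The core of the proof is a two-sided comparison of Dirichlet forms. For symmetric $\nu$, write $\mathcal E_\nu(f,f)=\langle(I-P_\nu)f,f\rangle=\tfrac12\sum_{x\in G}\sum_{y\in G}|f(xy)-f(x)|^2\nu(y)$. I claim there are constants $0<c\le C<\infty$, depending only on $\mu$ and $\Sigma$, with $c\,\mathcal E_{\mu_0}\le\mathcal E_\mu\le C\,\mathcal E_{\mu_0}$. For the upper bound, fix $y\in\mathrm{supp}(\mu)$ and a minimal-length word $y=s_1\cdots s_{|y|}$, $s_i\in\Sigma$; telescoping $f(xy)-f(x)=\sum_{i=1}^{|y|}\big(f(xs_1\cdots s_i)-f(xs_1\cdots s_{i-1})\big)$, applying Cauchy--Schwarz over the $|y|$ summands, and summing over $x\in G$ (re-indexing each inner shift, which is a bijection of $G$), one gets $\sum_x|f(xy)-f(x)|^2\le C_\Sigma\,|y|^2\,\mathcal E_{\mu_0}(f,f)$; multiplying by $\mu(y)$ and summing over $y$ gives $\mathcal E_\mu(f,f)\le C_\Sigma\big(\sum_y|y|^2\mu(y)\big)\mathcal E_{\mu_0}(f,f)$, and the bracket is finite \emph{precisely} because $\mu$ has finite second moment. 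For the lower bound I would argue symmetrically: since $\mathrm{supp}(\mu)$ generates $G$, each of the finitely many elements $s\in\{e\}\cup\Sigma$ is a word in $\mathrm{supp}(\mu)$, and the same telescoping estimates each $\Sigma$-direction of $\mathcal E_{\mu_0}$ by $\mathcal E_\mu$, with a finite overall constant since only finitely many $s$ occur.

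Having established $\mathcal E_\mu\asymp\mathcal E_{\mu_0}$ on $\ell^2(G)$ (with the counting measure in both cases), the Faber--Krahn functions $\Lambda_\nu(v):=\inf\{\mathcal E_\nu(f,f)/\|f\|_2^2:\ f\neq 0,\ |\mathrm{supp}(f)|\le v\}$ satisfy $\Lambda_\mu\asymp\Lambda_{\mu_0}$. I would then invoke the now-classical equivalence — for symmetric submarkovian operators, up to the equivalence $\simeq$, and valid in all decay regimes (polynomial, stretched-exponential, exponential) — between the Faber--Krahn function (equivalently, the optimal Nash-type inequality) of a Dirichlet form and the on-diagonal decay of its heat kernel: the upper half goes back to Nash and was developed by Varopoulos, Carlen--Kusuoka--Stroock, Coulhon and Grigor'yan, while the matching on-diagonal lower bound is due to Coulhon--Grigor'yan. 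Hence $h^\mu_t(e,e)$ and $h^{\mu_0}_t(e,e)$ are each $\simeq$-equivalent to one and the same function of $t$ determined by $\Lambda_\mu\asymp\Lambda_{\mu_0}$, and combining with the first paragraph gives $\Phi_\mu\simeq h^\mu_t(e,e)\simeq h^{\mu_0}_t(e,e)\simeq\Phi_{\mu_0}=\Phi_G$.

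The step I expect to be the main obstacle is the passage from the comparison of Dirichlet forms to the comparison of return probabilities. The Dirichlet-form comparison itself is elementary, the one genuinely informative point being that the finite-second-moment hypothesis is exactly the integrability needed to make the telescoping estimate converge (and that it cannot be dropped). By contrast, the functional-analytic input — that a symmetric Dirichlet form determines the on-diagonal heat decay up to $\simeq$ — is substantial, its delicate half being the on-diagonal lower bound; moreover one must keep enough uniformity in these equivalences to handle groups (e.g.\ of intermediate growth) where $\Phi_G$ decays like a stretched exponential rather than polynomially. A lesser but nonzero technical point is the reduction to the case $P_\mu\ge 0$, which requires a short argument about the parity/periodicity of $\mu$ and the passage to a finite-index subgroup. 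All of these ingredients are developed in \cite{PSCstab} and the references given there.
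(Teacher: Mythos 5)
The paper does not prove Theorem~\ref{PSC}: it is quoted from \cite{PSCstab} (Pittet--Saloff-Coste) as background, so there is no internal proof to compare against. Your outline is nevertheless essentially the standard one, and the two places you identify as critical are indeed where all the work lies. The Dirichlet-form two-sided comparison with a fixed lazy reference walk $\mu_0$ (telescoping along geodesics one way, writing generators as words in $\mathrm{supp}(\mu)$ the other way, with finite second moment being exactly what makes the first sum converge) is precisely the first half of the argument in \cite{PSCstab}. Where your route genuinely diverges is the passage from $\mathcal E_\mu\asymp\mathcal E_{\mu_0}$ to $\Phi_\mu\simeq\Phi_{\mu_0}$. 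You propose to route through the Faber--Krahn (spectral) profile and then invoke the Nash/Coulhon--Grigor'yan equivalence between that profile and on-diagonal heat-kernel decay, valid for arbitrary decay regimes. The original argument of Pittet and Saloff-Coste is more direct: after reducing to the case $I-P_\mu\ge 0$, the operator inequality $I-P_\mu\le C(I-P_{\mu_0})$ is translated, via the min--max principle for the von Neumann spectral distribution function $\lambda\mapsto\dim_G E^{I-P_\mu}_{[0,\lambda]}$, into a pointwise comparison of spectral densities, from which $\Phi_\mu(n)=\mathrm{tr}_G(P_\mu^{2n})=\int_{[0,1]}(1-\lambda)^{2n}\,dN_\mu(\lambda)$ gives the two-sided comparison of return probabilities directly, without ever writing down a Faber--Krahn or Nash inequality. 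The spectral-density route is leaner and sidesteps the uniformity issues you rightly worry about in the stretched-exponential regime (intermediate growth), whereas your Faber--Krahn route, while more machinery-heavy at that step, has the advantage of being the language in which the rest of the paper (Nash inequalities, spectral profile $\Lambda_{2,\mu}$) is actually phrased, so it connects more naturally with the arguments used later for the sharp pseudo-Poincar\'e inequalities. One small point: the aperiodicity reduction is usually handled by replacing $\mu$ with $\tfrac12(\delta_e+\mu)$, which keeps the support generating all of $G$; your $\mu\mapsto\mu*\mu$ device works too but, as you note, may drop to an index-two subgroup and then requires the separate fact that $\Phi_G$ is a commensurability invariant, which is itself part of the content of \cite{PSCstab}.
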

In the classical setting of $\mathbb Z^d$, this theorem deals with random walks which, after proper time-space scaling, tend to Brownian motion.  If we want to focus on stable-like behavior (with index in $(0,2)$), we need to consider measures $\mu$ which do not have finite second moment.  
A natural weaker condition (weaker when $\alpha\in (0,2]$) to consider is the following: 
\begin{defin}
\label{def:weak}
A measure $\mu$ is said to have finite weak $\alpha$-moment ($\alpha>0$) if 
$$\sup _{t>0}\{t^\alpha \mu\{g\in G: |g|\ge t\})\}<+\infty.$$
\end{defin}
This condition means that 
$\mu(\{g:|g|\ge r\})\le C r^{-\alpha}, \;r\ge 1$. In \cite{BSCsub,BSClow, SCZlow}, lower bounds on $\Phi_\mu$ are given in terms of $\Phi_G$ for such measures. In the classical setting of $\mathbb R^d$ and $\mathbb Z^d$, $\alpha$-stable like measures  have finite weak-$\alpha$-moment and no higher finite weak moments (see the definition of the function $\mathcal G$ at (\ref{defKG})).
Here we are interested in the case when $\alpha\in (0,2)$. The results in \cite{BSCsub,BSClow, SCZlow} provide good lower bounds for $\mu^{(2n)}(e)$ when $\mu$ is symmetric with finite $\alpha$-moment, $\alpha\in (0,2)$, and the question arises  to find conditions on $\mu$ that provide matching upper-bounds.
More generally, given a measure $\mu$ that is spread-out in a particular fashion  on a group $G$ and has infinite second moment so that $\mu$ is  expected to have the property that
$$\lim_{n\to +\infty}\frac{\Phi_\mu(n)}{\Phi_G(n)}=0,$$
how to get upper-bounds on $\Phi_\mu$ that are better than the universal upper-bound $\Phi_\mu(n)\le C_\mu\Phi_G(n/C_\mu)$ (assuming the support of $\mu$ generates $G$)?

In this work, we focus on finitely generated groups with polynomial volume growth, that is, groups for which there is an integer $d=d_G$ such that  (here $|A|$ denotes the number of elements in $A\subset G$) $$V(r)=|\{g\in G: |g|\le r\}|\asymp r^d.$$
On such a group, the function $\Phi_G$ satisfies $$\Phi_G(n)\asymp n^{-d/2}.$$ See \cite[Chapter VII]{VSCC} and the references therein. The results of \cite{BSClow,SCZlow} yield that, 
for any $\alpha\in (0,2)$ and any symmetric measure $\mu$ with finite weak-$\alpha$-moment,  there is a constant $c=c_\mu$
such that
$$ \Phi_\mu(n)\ge c_\mu n^{-d/\alpha}.$$
Also,  \cite{BSCsub,BSClow,CKSCWZ,SCZ-nil,SCZlow} provide examples of measures with finite 
weak-$\alpha$-moment for which a matching upper-bound can be obtained. The present work enlarges significantly the collection of such examples.  When thinking of such examples, it is important to remember that existing well established results imply that if $\mu_0$ is a symmetric probability measure satisfying, for all $n$,  $\Phi_{\mu_0}(n)\le C_0n^{-d/\alpha}$, then any other symmetric probability measure $\mu$ whose Dirichlet form,  
$$\mathcal E_\mu(f,f)=\frac{1}{2}\sum_{x,y\in G}|f(xy)-f(x)|^2\mu(y),$$
is such that
 $\mathcal E_{\mu_0}\le C\mathcal E_\mu$ will also satisfy $\Phi_\mu(n)\le C'n^{-d/\alpha}$.
 
 On a group $G$ with $V(r)\asymp r^d$, the simplest example of a $\mu$ with $\Phi_\mu(n)\asymp n^{-d/\alpha}$ is 
 \begin{equation}\label{defmualpha}
 \mu_\alpha(g)= \frac{c_{G,\alpha}}{(1+|g|)^{d+\alpha}} .
 \end{equation}
Note that this measure depends on the choice of the finite symmetric generating set $S$ that is implicit in the definition of the word-length $g\mapsto |g|$ and the constant $c_{G,\alpha}$ also depends on the choice of $S$.

\subsection{New results} In this subsection, we describe in simple terms two classes of measures 
for which we are able to prove sharp upper bounds on the probability of return at time $n$ which complement existing lower bounds. Later in the paper we put these results in a more general context and explain further consequences of these bounds. The new classes of measures considered here generalize and complement those studied in \cite{CKSCWZ,CKSCWZbook,SCZ-nil}.

Let $G$ be a finitely generated group with identity element $e$, equipped  with a symmetric finite generating set $\Sigma$. Let $|g|$ be the length of the element $g$ over $\Sigma$ (the smallest number of generators needed to write $g$; by convention, $|e|=0$). 

The first class of symmetric probability measures $\mu$ on $G$ we study is the family of all symmetric measures $\mu$ satisfying Condition ($\mathcal{L}_\alpha$), which reads as follows:
\begin{description}
\item [($\mathcal{L}_\alpha$)] for some fixed constants $A > 1$ and $\epsilon > 0$, for any integer $k\in \mathbb{N}$, 
\begin{itemize}
    \item there exists a subset $M_k$ of $G$ with 
    \begin{itemize}
        \item $ \mbox{card}(M_k)
   \succcurlyeq A^{kd}$
        \item  $|g| \asymp A^k$ for all $g \in M_k$, and
    \end{itemize}
    \item   $\mu(g) \ge \epsilon A^{-k(\alpha + d)}$ on $M_k \cup M_k^{-1}$
\end{itemize} 
\end{description}

Here $\mbox{card}(M_k)$ denotes the cardinality of the subset $M_k$, and 
$M_k^{-1} = \{g^{-1}: g\in M_k\}$. Measures satisfying ($\mathcal{L}_\alpha$) are generalizations of those given by Equation \ref{defmualpha} which uniformly assign a mass of roughly $|g|^{-d-\alpha}$ to every $g \in G$. Indeed, Condition ($\mathcal{L}_\alpha$)
only requires a  lower bound 
on a subset with the same volume growth rate as the balls of the group $G$; it leaves  open the possibility that the measure $\mu$ is small or vanishes on a large portion of any annulus at any scale.  

As a concrete example, 
imagine that each $M_k=B_{g_k}(r_k)$, $k\ge 1$, is a ball centered at a point $g_k\in G, |g_k|\le A^k$, and of radius $r_k\asymp A^k$.
We call ($B\mathcal{L}_\alpha$) this more restrictive version of ($\mathcal{L}_\alpha$), namely, 
\begin{description}
\item[($B\mathcal{L}_\alpha$)] for some fixed constants $A > 1$, $\epsilon>0$ and for any $k\in \mathbb{N}$,
\begin{enumerate}
    \item  
    there exists a ball $$M_k :=B_{g_k}(r_k)=\{g_k y:  |y|\le r_k\}$$ of radius $r_k \asymp A^k$ around $g_k \in G$ with $|g_k| \le A^k$, and
    \item $\mu(g) \ge \epsilon A^{-k(\alpha + d)}$ on $M_k \cup M_k^{-1}$
\end{enumerate}
\end{description}
\begin{theo}

\label{thm:1} Let $\alpha\in (0,2)$.    Let $G$ be a finitely generated group with a generating set $\Sigma$ such that $V(r) \asymp (1+r)^d$, i.e., $G$ has polynomial volume growth of degree $d$. Let $\mu$ be a symmetric probability measure satisfying ($\mathcal{L}_\alpha$). Then
    $$\mu^{(n)}(e) \preccurlyeq n^{-\frac{d}{\alpha}}.$$
\end{theo}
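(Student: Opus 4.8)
The plan is to deduce the upper bound $\mu^{(n)}(e)\preccurlyeq n^{-d/\alpha}$ from a comparison of Dirichlet forms, using the remark recalled in the introduction: if a reference measure $\mu_0$ satisfies $\Phi_{\mu_0}(n)\le C_0 n^{-d/\alpha}$ and $\mathcal E_{\mu_0}\le C\,\mathcal E_\mu$, then $\Phi_\mu(n)\le C' n^{-d/\alpha}$. We take $\mu_0=\mu_\alpha$ as in \eqref{defmualpha}, for which the matching two-sided bound $\mu_\alpha^{(n)}(e)\asymp n^{-d/\alpha}$ is already known on a group of polynomial growth of degree $d$ (this is the baseline $\alpha$-stable-like example). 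So the whole problem reduces to establishing the Dirichlet form comparison
\[
\mathcal E_{\mu_\alpha}(f,f)\;\preccurlyeq\;\mathcal E_{\mu}(f,f)\qquad\text{for all }f.
\]
Equivalently, one wants to bound $\sum_{x}\sum_{|y|\asymp A^k}|f(xy)-f(x)|^2 (1+|y|)^{-(d+\alpha)}$, summed over all dyadic-in-$A$ scales $k$, by a constant times $\mathcal E_\mu$.

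The key mechanism is that Condition $(\mathcal L_\alpha)$ gives us, at each scale $A^k$, a set $M_k$ of cardinality $\succcurlyeq A^{kd}$ on which $\mu$ itself is comparable to $A^{-k(\alpha+d)}$; this is exactly the order of magnitude $\mu_\alpha$ assigns to a single element at distance $\asymp A^k$. The step I expect to be the heart of the matter is a \emph{geometric interpolation / chaining} argument: one must show that for a fixed $y$ with $|y|\asymp A^k$, the increment $f(xy)-f(x)$ can be controlled by a bounded-length product of increments along steps drawn from $M_k\cup M_k^{-1}$ (together, perhaps, with a few steps at comparable or smaller scales), so that
\[
|f(xy)-f(x)|^2 \;\le\; C\sum_{j=1}^{N}\,|f(x z_1\cdots z_{j}) - f(x z_1\cdots z_{j-1})|^2,
\]
with $N$ bounded independently of $k$ and each $z_j\in M_k\cup M_k^{-1}$ (or a nearby scale). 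Summing over $x$ and averaging over all valid choices of the intermediate $z_j$'s — here one uses $\mathrm{card}(M_k)\succcurlyeq A^{kd}$ to make the averaging cheap — converts the left-hand side into $\mathcal E_\mu$-type quantities at scale $k$, and the volume growth $V(r)\asymp r^d$ together with the weight $A^{-k(\alpha+d)}$ makes the bound uniform in $k$. Finally one sums over $k$: the factor $A^{-k\alpha}$ coming from comparing the number of $y$'s at scale $A^k$ (namely $\asymp A^{kd}$, with weight $\asymp A^{-k(d+\alpha)}$) against the single-element $M_k$-mass produces a geometric series in $k$ that converges because $\alpha>0$.

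The main obstacle is precisely realizing the chaining in a non-commutative group: one needs that an arbitrary element $y$ of word length $\asymp A^k$ can be written as a product of a \emph{bounded} number of elements taken from the prescribed sets $M_k\cup M_k^{-1}$ (up to using a few neighboring scales $M_{k-1},M_{k-2},\dots$ to fill in the lower-order part of $y$), with enough freedom in the choice of the factorization to run the averaging. In the ball case $(B\mathcal L_\alpha)$ this is transparent — translates of a ball $B_{g_k}(r_k)$ of radius $\asymp A^k$ cover, in boundedly many steps, any element of length $\asymp A^k$ — and I would treat that case first as a model. For the general $(\mathcal L_\alpha)$ one must exploit only that $M_k$ has full volume exponent $d$; the relevant input is a covering/packing statement saying that boundedly many left translates of such an $M_k$ (and of the lower-scale $M_j$'s) suffice to reach distance $\asymp A^k$, which on a polynomial-growth group follows from volume-doubling-type estimates. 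Once this combinatorial-geometric lemma is in hand, the remainder is bookkeeping: collect the per-scale estimates, sum the geometric series in $k$, obtain $\mathcal E_{\mu_\alpha}\preccurlyeq \mathcal E_\mu$, and invoke the Dirichlet-form comparison principle to conclude $\mu^{(n)}(e)\preccurlyeq n^{-d/\alpha}$.
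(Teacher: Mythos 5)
Your proposal is built on a genuinely different plan than the paper's, but it has a real gap at exactly the step you flag as the main obstacle.

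You propose to prove the full Dirichlet-form comparison $\mathcal E_{\mu_\alpha}\preccurlyeq\mathcal E_\mu$, which requires that \emph{every} $y$ with $|y|\asymp A^k$ be writable as a boundedly long product of elements from $M_k\cup M_k^{-1}$ (possibly mixed with lower-scale $M_j$'s), with enough freedom to average. Under $(\mathcal L_\alpha)$ this is not true in general, and volume doubling does not rescue it: the condition $(\mathcal L_\alpha)$ only says $M_k$ sits in an annulus of radius $\asymp A^k$ and has cardinality $\succcurlyeq A^{kd}$, and such a set can be badly placed. For instance on $\mathbb Z^2$ one may take every $M_k$ to lie in the sublattice $\mathbb Z\times 2\mathbb Z$ (which has the full growth exponent $d=2$, so the cardinality requirement is met); then no product of elements of $\bigcup_k(M_k\cup M_k^{-1})$ ever reaches $(0,1)$, and $\mathcal E_{\mu_\alpha}\preccurlyeq\mathcal E_\mu$ simply fails. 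The ball case $(B\mathcal L_\alpha)$ that you identify as ``transparent'' is indeed the one situation where your chaining works, and the paper records this as a strictly stronger conclusion (Theorem \ref{thm:ball_pp}, pseudo-Poincar\'e for all $h\in G$); but Theorem \ref{thm:1} is for the general $(\mathcal L_\alpha)$, where that conclusion is unavailable.

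The paper avoids this difficulty entirely by proving a weaker, but sufficient, input. Lemma \ref{lem: rep} is a pure pigeonhole/counting argument: $M_k^{-1}\times M_k$ has $\mathrm{card}(M_k)^2\succcurlyeq A^{2kd}$ pairs, yet the product map lands in a set of cardinality $\preccurlyeq A^{kd}$, so a set $G_k$ of elements $h$ with $|h|\asymp A^k$ and $\mathrm{card}(G_k)\succcurlyeq A^{kd}$ must have large fibers $R(h)$ of size $\succcurlyeq A^{kd}$. One then proves the pseudo-Poincar\'e inequality \emph{only for $h\in G_k$}, by splitting $h=x\,R_2(h,x)$ with $x\in M_k^{-1}$ and averaging over the many choices $x\in R_1(h)$ (this is exactly where the fiber size $\succcurlyeq A^{kd}$ pays off, producing the factor $|h|^{-\alpha}$ on the left). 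Crucially, the passage to the Nash inequality $\|f\|_2^{2+2\alpha/d}\le C\,\mathcal E_\mu(f,f)\,\|f\|_1^{2\alpha/d}$ only needs a test set $G_k$ of cardinality $\succcurlyeq A^{kd}$ at each scale, not a pseudo-Poincar\'e inequality for every $h$; this is what makes the counting argument sufficient. So, to repair your proposal you would need to replace the covering/chaining claim with the pigeonhole construction of $G_k$ and then argue via Nash rather than via the comparison with $\mu_\alpha$.
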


If the  measure $\mu$ further has finite weak $\alpha$-moment (ref. Definition \ref{def:weak}), 
a complementary lower bound 
$$\mu^{(n)}(e) \succcurlyeq n^{-\frac{d}{\alpha}}$$
follows from results in \cite{BSClow,SCZlow}. 
A sufficient condition for a measure $\mu$ to have  finite weak $\alpha$-moment is to admit the  upper bound $\mu(g)\preccurlyeq (1+|g|)^{-\alpha+d}$.  \\

The second type of symmetric probability measures we study are associated with the choice of a $k$-tuple of elements, $S=(s_1,\dots,s_k)$ and the map $$\pi_S:\mathbb Z^k\to G, \bar{a}=(a_1,\dots,a_k)\mapsto \pi_S(\bar{a})=s_1^{a_1}\dots s_k^{a_k}.$$ Namely, given a probability measure  $\psi$ on $\mathbb Z^k$, we define a symmetric probability measure, $\nu_{\psi,S}$, on $G$ by setting  
\begin{align}
 \nu_{\psi,S}(h)=   \label{nupsi}
    \begin{cases}
     \frac{1}{2}\sum\limits_{ \bar{a}: 
      \pi_S(\bar{a})\in \{h,h^{-1}\} } \psi(\bar{a}) &\text{if }\{h,h^{-1}\}\cap \pi_S(\mathbb Z^k)\neq \emptyset \\
         0 &\text{otherwise }\\
\end{cases}
\end{align}

We refer to this type of example as coordinate-wise stable-like measures even so the map $\pi$ is not, in general, a coordinate system for the group $G$. Indeed, $S$ may not contain enough elements to provide a coordinate system, i.e., no surjectivity is assumed for $\pi_S$, and no injectivity either. However, in some examples we will discuss later the tuple $S$ and the map $\pi$ will be related to the choice of a coordinate system for $G$. Note the we have built symmetry in the definition of $\nu_{\pi,S}$, independently of whether or not $\psi$ itself is symmetric as a measure on $\mathbb Z^k$.

Our main result for such measures is phrased in terms of the following pseudo-Poincar\'e inequality. Let $|h|_S$ be the word-length of an element $h$ of the subgroup of $G$ generated by the $k$-tuple $S$, $\langle S\rangle$, with respect to its symmetric generating finite set $\{s_1^{\pm1},\dots,s_k^{\pm 1}\}$. For $\bar{a}\in \mathbb Z^k$, set $\|\bar{a}\|=\sqrt{\sum_1^k|a_i|^2}$.

\begin{theo}\label{thm:2} Let $\alpha\in (0,2)$. Let $G$ be a finitely generated group. Let $\psi$ be a probability distribution on $\mathbb Z^k$ such that $\psi(\bar{a})\succcurlyeq (1+\|\bar{a}\|)^{-\alpha-k}.$ 
\begin{itemize}
    \item There is a constant $C=C(G,S,\psi)$ such that, for any $s\in S$, $n  \in \mathbb{N}$ and finitely supported function $f$,
\begin{equation}\label{PPh2} \sum_{g\in G}|f(gs^n)-f(g)|^2\le C n^\alpha \mathcal E_{\nu_{\psi,S}}(f,f).
\end{equation}
\item If $G$ is nilpotent, for all $h \in \langle S\rangle$ and finitely supported function $f$,
\begin{equation*} \sum_{g\in G}|f(gh)-f(g)|^2\le C |h|_S^\alpha \mathcal E_{\nu_{\psi,S}}(f,f).
\end{equation*}
\item If $G$ is nilpotent and the $k$-tuple $S$ generates $G$ as a group, 
$$\nu_{\psi,S}^{(n)}(e) \preccurlyeq n^{-\frac{d}{\alpha}}$$
and if moreover $\psi(\bar{a})\asymp (1+\|\bar{a}\|)^{-\alpha-k}$, then
$\nu_{\psi,S}^{(n)}(e) \asymp n^{-\frac{d}{\alpha}}.$ 
\end{itemize}
\end{theo}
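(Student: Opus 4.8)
The plan is to derive the on‑diagonal upper bound $\nu_{\psi,S}^{(n)}(e)\preccurlyeq n^{-d/\alpha}$ from the pseudo‑Poincar\'e inequality of the second item together with the growth hypothesis $V(r)\asymp r^d$, via the classical passage from a Nash inequality to on‑diagonal decay, and to obtain the matching lower bound from \cite{BSClow,SCZlow} after checking that $\nu_{\psi,S}$ has finite weak $\alpha$‑moment.

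For the upper bound: since $S$ generates $G$, the word metrics $|\cdot|_S$ and $|\cdot|_\Sigma$ are comparable, so the second item gives $\sum_g|f(gh)-f(g)|^2\le C|h|_\Sigma^\alpha\,\mathcal E_{\nu_{\psi,S}}(f,f)$ for every $h\in G$ and every finitely supported $f$. Writing $\tau_h$ for right translation by $h$ and $A_r=V(r)^{-1}\sum_{|h|_\Sigma\le r}\tau_h$, for $f\ge 0$ with $\|f\|_1=1$ and $r\ge 1$ one has the identity
\[
\|f\|_2^2-\langle f,A_rf\rangle=\frac{1}{2V(r)}\sum_{|h|_\Sigma\le r}\|f-\tau_hf\|_2^2 ,
\]
because each $\tau_h$ is an $\ell^2$‑isometry; by positivity of $f$ the left side is $\ge \|f\|_2^2-\|f\|_1^2/V(r)\ge \|f\|_2^2-c\,r^{-d}$, and by the pseudo‑Poincar\'e inequality the right side is $\le \tfrac12 C' r^\alpha\,\mathcal E_{\nu_{\psi,S}}(f,f)$. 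Optimizing $\|f\|_2^2\le c\,r^{-d}+c'\,r^\alpha\,\mathcal E_{\nu_{\psi,S}}(f,f)$ over $r\ge 1$ (take $r\asymp\mathcal E_{\nu_{\psi,S}}(f,f)^{-1/(\alpha+d)}$) and rescaling in $\|f\|_1$ yields the Nash inequality $\|f\|_2^{\,2+2\alpha/d}\le C''\,\mathcal E_{\nu_{\psi,S}}(f,f)\,\|f\|_1^{\,2\alpha/d}$. Since $\nu_{\psi,S}$ is symmetric, its convolution operator $P$ is self‑adjoint on $\ell^2(G)$ with Dirichlet form $\mathcal E_{\nu_{\psi,S}}$, so the standard Nash argument (e.g.\ \cite{VSCC}) gives $\|P^n\|_{1\to\infty}\le C''' n^{-d/\alpha}$, whence $\nu_{\psi,S}^{(n)}(e)=(P^n\delta_e)(e)\le C''' n^{-d/\alpha}$.

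For the lower bound, now under $\psi(\bar a)\asymp(1+\|\bar a\|)^{-\alpha-k}$: the measure $\nu_{\psi,S}$ is symmetric by construction and charges each $s_i=\pi_S(e_i)$, so its support generates $G$; and since $|\pi_S(\bar a)|_\Sigma\le\big(\max_i|s_i|_\Sigma\big)\sum_i|a_i|\lesssim\|\bar a\|$, we get $\nu_{\psi,S}(\{g:|g|_\Sigma\ge t\})\lesssim\sum_{\|\bar a\|\gtrsim t}(1+\|\bar a\|)^{-\alpha-k}\asymp t^{-\alpha}$, i.e.\ $\nu_{\psi,S}$ has finite weak $\alpha$‑moment (Definition \ref{def:weak}). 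The lower bounds of \cite{BSClow,SCZlow}, which on a group with $V(r)\asymp r^d$ apply to any symmetric measure with generating support and finite weak $\alpha$‑moment, then give $\nu_{\psi,S}^{(n)}(e)\succcurlyeq n^{-d/\alpha}$, and combining with the upper bound yields $\nu_{\psi,S}^{(n)}(e)\asymp n^{-d/\alpha}$.

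The only delicate point in this last item is that the exponent $d/\alpha$ is exactly the one forced by the \emph{sharp} exponent $\alpha$ in the pseudo‑Poincar\'e inequality together with the precise growth $V(r)\asymp r^d$; the substantial work of the theorem lies in its first two items, on which everything above rests. There the heart of the matter is the coordinate‑generator inequality $\sum_g|f(gs^n)-f(g)|^2\preccurlyeq n^\alpha\mathcal E_{\nu_{\psi,S}}(f,f)$, which one would obtain by comparing $\mathcal E_{\nu_{\psi,S}}$ with the coordinate‑wise $\alpha$‑stable form $\sum_{j=1}^k\mathcal E_j$, where $\mathcal E_j(f,f)\asymp\sum_{b\in\mathbb Z}(1+|b|)^{-1-\alpha}\sum_g|f(gs_j^b)-f(g)|^2$. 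The bound $\mathcal E_{\nu_{\psi,S}}\preccurlyeq\sum_j\mathcal E_j$ is one line of telescoping of $\pi_S(\bar a)=s_1^{a_1}\cdots s_k^{a_k}$; the reverse bound $\sum_j\mathcal E_j\preccurlyeq\mathcal E_{\nu_{\psi,S}}$ — the step I expect to be the main obstacle — rests on the fact that every coordinate marginal of an $\alpha$‑stable law on $\mathbb Z^k$ is $\alpha$‑stable on $\mathbb Z$, on a routing of each one‑coordinate jump $s_j^b$ through the multi‑coordinate jumps $\pi_S(\bar u+be_j)$ with a carefully weighted average over $\bar u$, and on the one‑dimensional pseudo‑Poincar\'e inequality along the $\langle s_j\rangle$‑cosets. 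Passing from coordinate generators to arbitrary $h\in\langle S\rangle$ in the nilpotent case is then telescoping along a Mal'cev basis combined with the weighted length estimates $|t_j^b|_S\asymp|b|^{1/w_j}$ in nilpotent groups, the commutator basis elements requiring the kind of commutator‑identity bookkeeping carried out in \cite{SCZ-nil}.
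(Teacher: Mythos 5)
Your proposal is correct and follows essentially the same route as the paper: the paper establishes the coordinate-generator pseudo-Poincar\'e inequality (\ref{PPa}) by an induction on the coordinate index using the decomposition $s_j^a=\pi(d)^{-1}\cdot\pi(d)s_j^a\pi(u)\cdot\pi(u)^{-1}\pi(d)^{-1}\cdot\pi(d)$ with a weighted telescoping over the index sets $U_{a,j}\times D_{a,j-1}$, which is exactly the ``routing of $s_j^b$ through $\pi_S(\bar u+be_j)$ with a carefully weighted average'' that you sketch (and your Dirichlet-form comparison $\mathcal E_{\nu_{\psi,S}}\asymp\sum_j\mathcal E_j$ is the same computation, which the paper records explicitly as Theorem~\ref{thm: diri-comp}). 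The remaining ingredients you identify --- the passage from coordinate generators to general $h\in\langle S\rangle$ via the nilpotent weight theory of \cite{SCZ-nil,SCZlow}, the Nash-inequality argument for the on-diagonal upper bound, and finite weak $\alpha$-moment plus generating support yielding the lower bound from \cite{BSClow,SCZlow} --- all match the paper's proof.
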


The upper bound for $\nu_{\psi,S}^{(n)}(e)$ in the last bullet follows from the usual Nash inequality argument once the second bullet is established. (see, e.g., \cite[Appendix]{SCIso}, \cite[Chapters VI and VII]{VSCC} and \cite{CNash}). The lower bound  in the last statement  follows from  \cite{BSClow,SCZlow} and the fact that, in the considered cases, $\nu_{\psi,S}$ has finite weak $\alpha$-moment.

\begin{rem}
\label{rem: polypp}
Suppose $G$ is of polynomial growth, but not necessarily nilpotent. The pseudo-Poincar\'e inequality for a general $h \in \langle S\rangle$ and hence the convolution upper bound $\nu_{\psi,S}^{(n)}(e) \preccurlyeq n^{-\frac{d}{\alpha}}$ in the second and third bullet points in above theorem 
do not hold true as stated here but a modified version providing sharp results will nonetheless be obtained. We defer the discussion of this case to Section \ref{sec: polygrowth}
\end{rem}

\section{Pseudo-Poincar\'e inequalities}
\subsection{Measures satisfying ($\mathcal{L}_\alpha$)}

Let $G$ be a finitely generated group 
with a generating set $\Sigma$ 
such that $V(r) \asymp r^d$. Denote by $|g|$ the word length of $g\in G$ with respect to $\Sigma$.{\if Let $\mu$ be a symmetric probability measure that satisfies   ($\mathcal{L}_\alpha$):
\begin{description}
\item [($\mathcal{L}_\alpha$)] for some fixed constants $A > 1$ and $\epsilon > 0$, for any integer $k \ge 1$, 
\begin{itemize}
    \item there exists a subset $M_k$ of $G$ with 
    \begin{itemize}
        \item $\mbox{card}(M_k)
   \succcurlyeq A^{kd}$
        \item  $|g| \asymp A^k$ for all $g \in M_k$, and
    \end{itemize}
    \item   $\mu(g) \ge \epsilon A^{-k(\alpha + d)}$ on $M_k \cup M_k^{-1}$
\end{itemize} 
\end{description}
\fi}
The goal of this subsection is to prove the following theorem. 
\begin{theo}
\label{thm:point_pp}
Let $G$ be defined as above and let $\mu$ be a symmetric probability measure that satisfies ($\mathcal{L}_\alpha$). 
There exist a constant $C=C(G,\mu)$ and, for every integer $k\ge 1$, 
   a subset $G_k$ of $G$ such that  
   \begin{itemize}
    \item $\mbox{\em card}(G_{k}) \succcurlyeq A^{kd}$, 
    \item for every $h \in G_k$,  $|h| \asymp A^k$, and 
    \item for any $h \in G_k$ and finitely supported function $f$,
    \begin{align}
    \label{eq:point_pp}
    \sum_{x\in G}|f(xh)-f(x)|^2\le C |h|^\alpha \mathcal E_\mu(f,f).
\end{align}

\end{itemize}

\end{theo}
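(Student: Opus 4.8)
The plan is to reduce the pseudo-Poincaré inequality for a displacement $h$ to a telescoping argument along a path of jumps, each jump being an element of $M_j\cup M_j^{-1}$ for a suitable scale $j\le k$, and to control the number of such jumps using the polynomial volume growth of $G$ together with the lower bound $\mbox{card}(M_j)\succcurlyeq A^{jd}$. First I would fix $k$ and consider the set $M_k$ from Condition ($\mathcal{L}_\alpha$). The natural candidate for $G_k$ is (a large subset of) $M_k\cdot M_k^{-1}$, or more simply $M_k$ itself after a harmless adjustment: every $h\in M_k$ has $|h|\asymp A^k$, and $\mbox{card}(M_k)\succcurlyeq A^{kd}$, so the first two bullets are immediate. (If for technical reasons one needs $h$ to be expressible as a bounded-length word in elements on which $\mu$ is large, one replaces $M_k$ by the set of products $g_1 g_2^{-1}$, $g_1,g_2\in M_k$, which still has cardinality $\succcurlyeq A^{kd}$ since the fibers of the product map have size $\preccurlyeq$ a constant times... — actually one argues that the image has size $\succcurlyeq A^{kd}$ because it contains a ball-like set, or simply notes $M_k\subset M_k\cdot M_k^{-1}$ after translating.)

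For the inequality itself, the key step is the standard one-step bound: for any single $y\in G$ with $\mu(y)\ge \epsilon A^{-k(\alpha+d)}$,
\begin{equation*}
\sum_{x\in G}|f(xy)-f(x)|^2 \le \frac{2}{\mu(y)}\,\mathcal E_\mu(f,f) \le \frac{2}{\epsilon}\,A^{k(\alpha+d)}\,\mathcal E_\mu(f,f),
\end{equation*}
using left-invariance of counting measure and the definition of $\mathcal E_\mu$. Naively applied with $y=h$ this gives the bound $A^{k(\alpha+d)}\mathcal E_\mu(f,f)$, which is far too large: it has an extra $A^{kd}$ compared to the target $|h|^\alpha\mathcal E_\mu\asymp A^{k\alpha}\mathcal E_\mu$. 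The resolution is that one must not jump directly; instead one writes $h$ as a product of roughly $A^{kd}$ "small" steps would still be wrong too, so the real mechanism is averaging: one writes $h = g_1 g_2^{-1}$ (or more generally as a product over a path) and averages over the many choices of $g_1,g_2\in M_k$, exploiting that $\sum_{g\in M_k}|f(xg)-f(x)|^2 \le (2/\mu)\mathcal E_\mu \cdot$(number of terms is irrelevant because each term is individually controlled and we then \emph{divide by} $\mbox{card}(M_k)\asymp A^{kd}$). Concretely: for fixed $h$ choose a representation $h=ab^{-1}$ with $a,b\in M_k$ if possible, or average the identity $f(xh)-f(x)=\sum(\text{increments})$ over an ensemble of intermediate points lying in translates of the $M_j$; then Cauchy–Schwarz in the number of steps, sum over $x$, and use the one-step bound on each increment. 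The division by $\mbox{card}(M_k)\succcurlyeq A^{kd}$ cancels exactly the unwanted $A^{kd}$ factor, leaving $A^{k\alpha}\mathcal E_\mu(f,f)\asymp |h|^\alpha\mathcal E_\mu(f,f)$.

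To make the averaging rigorous and to reach \emph{every} $h\in G_k$ (not just those of the form $ab^{-1}$), I would build a dyadic-type path: connect $e$ to $h$ through scales $j=1,\dots,k$, moving at scale $j$ by an element of $M_j\cup M_j^{-1}$, and at each scale average over the $\succcurlyeq A^{jd}$ available choices so that the intermediate points equidistribute over annuli of radius $\asymp A^j$; the polynomial growth $V(r)\asymp r^d$ guarantees that a bounded number of steps per scale suffices to traverse the group from $e$ to any $h$ with $|h|\asymp A^k$. Summing the per-increment bounds $A^{j(\alpha+d)}\mathcal E_\mu$, each divided by $A^{jd}$, gives $\sum_{j\le k} A^{j\alpha}\mathcal E_\mu \asymp A^{k\alpha}\mathcal E_\mu$ since $\alpha>0$. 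The main obstacle is the combinatorial/geometric bookkeeping of this multiscale averaging — showing that from any point one can reach a $\succcurlyeq A^{jd}$-sized, well-spread subset at the next scale using only the sets $M_j$, so that the Cauchy–Schwarz step loses only a bounded factor per scale and the total number of scales is $O(k)$; the analytic content (one-step bound, telescoping, geometric summation in $j$) is routine once that is in place.
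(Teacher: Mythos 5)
Your intuition about the averaging mechanism is correct: the cancellation of the unwanted $A^{kd}$ factor does come from dividing by a count $\succcurlyeq A^{kd}$ of two-step representations $h=gg'$, and the paper indeed uses exactly a one-scale, two-step decomposition through intermediate points of the form $xg$ with $g\in M_k^{-1}$ and $g'=g^{-1}h\in M_k$. But there is a genuine gap in how you propose to build $G_k$, and it is precisely the nontrivial content of the argument.

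Taking $G_k=M_k$ (your first suggestion) does not work: an element of $M_k$ need not have \emph{any} representation as a product of elements of $M_k^{-1}\times M_k$, let alone $\succcurlyeq A^{kd}$ of them, so the third bullet would fail. Taking $G_k = M_k^{-1}M_k$ (or $M_kM_k^{-1}$) also does not work as stated, because controlling the \emph{cardinality of the image} tells you nothing about the \emph{fiber size} $\mbox{card}\{(g,g')\in M_k^{-1}\times M_k : gg'=h\}$ for individual $h$; some elements of the product set could have only one preimage, and for those the averaging over representations gains nothing and you are stuck with the too-large bound $A^{k(\alpha+d)}\mathcal E_\mu$. What the proof needs, and what your plan never isolates, is a set $G_k$ with $\mbox{card}(G_k)\succcurlyeq A^{kd}$ on which the fiber $R(h)$ is \emph{uniformly} large, $\mbox{card}(R(h))\succcurlyeq A^{kd}$. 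The paper achieves this by a double-counting / pigeonhole argument (Lemma~\ref{lem: rep}): since $\mbox{card}(M_k^{-1}\times M_k)=\mbox{card}(M_k)^2$ and the product set $P=M_k^{-1}M_k$ has $\mbox{card}(P)\preccurlyeq c\,\mbox{card}(M_k)$ by polynomial volume growth, and since each fiber has size at most $\mbox{card}(M_k)$, at least half of $P$ (by count) must have fiber size $\ge \frac{1}{2c}\mbox{card}(M_k)$. That pigeonhole step, plus a trim to ensure $|h|\asymp A^k$ (intersecting with an annulus), is what your plan is missing. Once this is in place, the pseudo-Poincaré bound follows from the two-step averaging essentially as you describe, and the multiscale / dyadic path you sketch at the end is not needed at all (and would be substantially harder to make rigorous, since it is not clear how to ``equidistribute intermediate points over annuli'' with only the ($\mathcal L_\alpha$) hypothesis).
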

Before proving this result, we discuss how it leads to Theorem \ref{thm:1}.

\begin{proof}[Proof of Theorem \ref{thm:1}]
By a well-known argument, Theorem \ref{thm:1} follows from Theorem \ref{thm:point_pp}. Namely, the pseudo-Poincar\'e inequality in Equation (\ref{eq:point_pp}) and the volume lower bound of $G_k$ 
 imply the Nash inequality (see, e.g., \cite[Appendix]{SCIso})
 $$\|f\|_2^{2+2\alpha/d}\le C\mathcal E_\mu(f,f)\|f\|_1^{2\alpha/d}$$
 In turn, this Nash inequality implies the upper bound $\mu^{(n)}(e)\preccurlyeq n^{-d/\alpha}.$ See, e.g., \cite[Chapters VI and VII]{VSCC} and \cite{CNash}.\end{proof}

To prove Theorem \ref{thm:point_pp} we first find the subsets $G_k$, $k\in \mathbb{N} $, referenced in Theorem \ref{thm:point_pp} and explore their properties, which lead to the pseudo-Poincaré inequality. 
\begin{lem}
\label{lem: rep}
Let $G$ be defined as above and let $\mu$ be a symmetric probability measure that satisfies ($\mathcal{L}_\alpha$). For each $k \in \mathbb{N}$, 
    there exists
    a subset $G_{k}$ of $G$ with the following properties:
\begin{itemize}
    \item $\mbox{\em card}(G_{k}) \succcurlyeq A^{kd}$
    \item for any $h \in G_{k}$, $|h|\asymp A^k$
    \item for any $h \in G_{k}$, the tuple set $$ R(h)  = \{(g,g') \in M_k^{-1} \times M_k: gg' = h\}$$ satisfies 
    $\mbox{\em card}(R(h)) \succcurlyeq  A^{kd}$
\end{itemize}
\end{lem}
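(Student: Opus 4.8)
The plan is to prove Lemma~\ref{lem: rep} by a reverse pigeonhole (double-counting) argument applied to the multiplication map
$$\Pi_k : M_k^{-1} \times M_k \to G, \qquad (g,g') \mapsto gg',$$
for which $R(h) = \Pi_k^{-1}(h)$; only the cardinality and word-length properties of $M_k$ from $(\mathcal{L}_\alpha)$ are used here, the pointwise mass lower bound on $\mu$ entering only afterwards when one passes from Lemma~\ref{lem: rep} to Theorem~\ref{thm:point_pp}. First I would record three elementary size estimates, with constants depending only on $G$, $\Sigma$, and the constants in $(\mathcal{L}_\alpha)$. Since $|g| \asymp A^k$ on $M_k$ and $\Sigma$ is symmetric, $|g| \asymp A^k$ on $M_k^{-1}$ as well, so $|gg'| \le |g| + |g'| \le C_1 A^k$ for every $(g,g') \in M_k^{-1}\times M_k$ and the image of $\Pi_k$ lies in $\{h : |h| \le C_1 A^k\}$, a set of cardinality $V(C_1 A^k) \le C_2 A^{kd}$ by polynomial growth; applying this containment to $M_k$ itself gives $\mathrm{card}(M_k) \le C_2 A^{kd}$, hence $c_0 A^{kd} \le \mathrm{card}(M_k) \le C_2 A^{kd}$ by $(\mathcal{L}_\alpha)$. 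Finally, for fixed $h$ the assignment $(g,g') \mapsto g$ embeds $R(h)$ into $M_k^{-1}$ (as $g' = g^{-1}h$ is forced), so $\mathrm{card}(R(h)) \le \mathrm{card}(M_k) \le C_2 A^{kd}$ uniformly in $h$.

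For the popularity step, set $N = \mathrm{card}(M_k^{-1}\times M_k) = \mathrm{card}(M_k)^2 \ge c_0^2 A^{2kd}$. Because $\sum_h \mathrm{card}(R(h)) = N$ and this sum is supported on a set of cardinality $\le C_2 A^{kd}$, those $h$ with $\mathrm{card}(R(h)) < N/(2C_2 A^{kd})$ contribute at most $N/2$ to it, so
$$\widetilde G_k := \Big\{\, h : \mathrm{card}(R(h)) \ge \tfrac{N}{2 C_2 A^{kd}} \,\Big\}$$
satisfies $\sum_{h \in \widetilde G_k} \mathrm{card}(R(h)) \ge N/2$. Combined with the uniform fiber bound $\mathrm{card}(R(h)) \le C_2 A^{kd}$, this forces $\mathrm{card}(\widetilde G_k) \ge N/(2 C_2 A^{kd}) \ge \tfrac{c_0^2}{2 C_2} A^{kd} \succcurlyeq A^{kd}$, while by construction $\mathrm{card}(R(h)) \ge \tfrac{c_0^2}{2 C_2} A^{kd} \succcurlyeq A^{kd}$ for every $h \in \widetilde G_k$. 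This already establishes, for $\widetilde G_k$, the first and third bullet properties together with the upper half $|h| \preccurlyeq A^k$ of the second.

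It remains only to enforce the matching lower bound $|h| \succcurlyeq A^k$, which I would obtain by excising an inner ball: set $G_k := \widetilde G_k \setminus \{\, h : |h| \le \delta A^k \,\}$ for a small constant $\delta > 0$ depending only on $c_0, C_1, C_2$. Since $\mathrm{card}\big(\widetilde G_k \cap \{|h| \le \delta A^k\}\big) \le V(\delta A^k) \le C_2 \delta^d A^{kd}$ (enlarging $C_2$ if needed) while $\mathrm{card}(\widetilde G_k) \ge \tfrac{c_0^2}{2 C_2} A^{kd}$, choosing $\delta$ with $C_2 \delta^d \le \tfrac{c_0^2}{4 C_2}$ gives $\mathrm{card}(G_k) \ge \tfrac{c_0^2}{4 C_2} A^{kd} \succcurlyeq A^{kd}$; every $h \in G_k$ then has $\delta A^k < |h| \le C_1 A^k$, i.e.\ $|h| \asymp A^k$, and $\mathrm{card}(R(h)) \succcurlyeq A^{kd}$ persists since $G_k \subseteq \widetilde G_k$. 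The only part requiring genuine care is the bookkeeping of the $\asymp$, $\succcurlyeq$, $\preccurlyeq$ constants — making sure $\widetilde G_k$ is of size $\succcurlyeq A^{kd}$ rather than merely nonempty, and that this survives the removal of the inner ball — which is precisely where the two-sided estimates $\mathrm{card}(M_k) \asymp A^{kd}$ and $V(C_1 A^k) \asymp A^{kd}$ are used; I do not anticipate any other obstacle.
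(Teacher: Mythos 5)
Your proof is correct and follows essentially the same double-counting argument as the paper: both bound the image of $\Pi_k$ by $\preccurlyeq A^{kd}$ using polynomial growth, bound each fiber $R(h)$ by $\mathrm{card}(M_k)\asymp A^{kd}$ via the injectivity of $(g,g')\mapsto g$, pigeonhole over $T=M_k^{-1}\times M_k$ to find $\succcurlyeq A^{kd}$ popular images, and then excise a small inner ball (your radius $\delta A^k$ is the paper's $A^{k-j}$) to enforce the lower bound $|h|\succcurlyeq A^k$. The only cosmetic difference is that you phrase the pigeonhole as a lower bound on $\sum_{h\in\widetilde G_k}\mathrm{card}(R(h))$ whereas the paper phrases it as an upper bound on $\mathrm{card}(T)$; the content is identical.
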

\begin{rem} The relationship between the sequence of sets $G_k$ and the measure $\mu $ which satisfies ($\mathcal{L}_\alpha$) is encoded in the third bullet condition of Lemma \ref{lem: rep}.\end{rem}
\begin{proof}
Fix some $k \in \mathbb{N}$. 
Take
\begin{align*}
    T = T_k & := M_k^{-1} \times M_k = \{(g,g') : g\in M_k^{-1}, g'\in M_k\}\\
    P = P_k & := \{gg': (g,g') \in T)\} \subseteq G
\end{align*}
Since elements of $P$ are of word length no greater than $2A^k$, $\mbox{card}(P) \preccurlyeq (2A^k)^d$. Recall by hypothesis $\mbox{card}(M_k) \succcurlyeq A^{kd}$. Hence we can find a constant $c$ with  $\mbox{card}(P) \le c \cdot  \mbox{card}(M_k)$. For each $h \in P$, define 
\begin{align}
\label{eq: preimage}
    R(h)  = \{(g,g') \in M_k^{-1}\times M_k: gg' = h\}
\end{align}
Observe that $\mbox{card}(R(h)) \le \mbox{card}(M_k) $ for any $h \in P$. Indeed, if $\mbox{card}(R(h)) > \mbox{card}(M_k)$, that would imply $h = gg_1 = gg_2$ for some $g \in M_k^{-1}$ and $g_1 \ne g_2 \in M_k$ which is not possible. It follows $P$ can be partition as $\Tilde{G}_k \cup (P\backslash \Tilde{G}_k)$ where
\begin{align*}
      \Tilde{G}_{k}  &= \left\{h \in P :  \frac{1}{2c}\mbox{card}(M_k)  \le \mbox{card}(R(h)) \le \mbox{card}(M_k)\right \}  \\
      P\backslash \Tilde{G}_k & =  \left\{h \in P :  \mbox{card}(R(h)) < \frac{1}{2c}\mbox{card}(M_k)\right \} 
\end{align*}
This gives an upper bound for $\mbox{card}(T)$: 
\begin{align*}
    \mbox{card}(T)  &\le  \mbox{card}(M_k) \mbox{card}(\Tilde{G}_k) + \frac{1}{2c}\mbox{card}(M_k) \mbox{card} (P \backslash \Tilde{G}_k)   \\
    & \le \mbox{card}(M_k)\mbox{card}(\Tilde{G}_k) +  \frac{1}{2c} \mbox{card}(M_k) \mbox{card}(P) \\
     & \le \mbox{card}(M_k)\mbox{card}(\Tilde{G}_k) +  \frac{1}{2} \mbox{card}(M_k)^2
\end{align*}

As $\mbox{card}(T)  =  \mbox{card}(M_k) ^2$, it follows  $\mbox{card}(\Tilde{G}_k) \ge \frac{1}{2}\mbox{card}(M_k) \succcurlyeq \frac{1}{2} A^{kd}$. 
Find $j \ge 1$ large enough such that  
$\mbox{card}(B_e(A^{k-j})) <   \frac{1}{2}  \mbox{card}(\Tilde{G}_k)$.
Let $G_k$ be the intersection 
 of $\Tilde{G}_k$ and the annulus 
$B_e(A^{k})-B_e(A^{k-j})$; it's of size at least $ \frac{1}{2}  \mbox{card}(\Tilde{G}_k)$. 
Trivially, one can verify $G_k$ 
satisfies all the desired properties, completing the proof of the lemma. 
\end{proof}

\begin{proof}[Proof of Theorem \ref{thm:point_pp}]

Fix $k \in \mathbb{N}$ and let $G_k$ be defined as in Lemma \ref{lem: rep}. Take $h \in G_k$ and a finitely supported function $f$, it remains to check that Equation \ref{eq:point_pp} holds. 

 Let $R(h)$ be defined as in Equation \ref{eq: preimage}. Observe that by the definition of $R(h)$, for any $(x_1, x_2)\ne  (x_3, x_4) $ in $R(h)$, it must be that $x_1 \ne x_3$ and $x_2 \ne x_4$. 
Indeed, if, for instance,  $x_1 = x_3$, then $h = x_1x_2 = x_1x_4$  implies $x_2 = x_4$, a contradction. Let $$R_1(h) = \{x \in M_k^{-1} : \exists x' \in M_k \text{ such that }(x,x') \in R(h) \}
 $$ It follows that $\mbox{card}(R_1(h)) = \mbox{card}(R (h))$.  
 For each $x \in R_1(h)$ with $(x,x') \in R(h)$, we set $R_2(h,x) = x'$. Again,  by our earlier observation,  $R_2(h,\cdot)$ is well-defined. 
Let $\nu$ be the (sub-probability) measure that's equal to $\epsilon A^{-k(\alpha + d)}$ on $M_k \cup M_k^{-1}$ and 0 otherwise. Compute
    \begin{align*}
        &\sum_{g \in G} |f(gh) - f(g)|^2 \sum_{
       x\in R_1(h)} \nu(x) \\
         &\le 2 \sum_{
        g \in G, x\in R_1(h) } |f(gh) - f(gx)|^2  \nu(x) +  
         2 \sum_{
        g \in G, x\in R_1(h) }|f(gx) - f(g)|^2  \nu(x) \\
         &= 2 \sum_{
        g \in G, x\in R_1(h) } |f(gx^{-1}h) - f(g)|^2  \nu(x) +  
         2 \sum_{
        g \in G, x\in R_1(h) }|f(gx) - f(g)|^2  \nu(x) \\
        &\le 2 \sum_{
          g \in G, x\in R_1(h) } |f(gR_2(h,x)) - f(g)|^2  \nu(x) +  
         2\sum_{ g \in G, x\in R_1(h)} |f(gx) - f(g)|^2  \nu(x) \\
         & \le 2 \mathcal{E}_\nu(f,f)   + 2 \mathcal{E}_\nu(f,f) \preccurlyeq 4 \mathcal{E}_\mu(f,f) 
    \end{align*}
    In the second last step, we use the fact that $R_2(h,x) \in M_k$ is assigned the same measure  by $\nu$ as $x \in R_1(h)$, and the last steps follows from $\nu(g) \preccurlyeq \mu(g)$ for all $g \in G$. 
    It remains to compute
    $$\sum_{
       x\in R_1(h)} \nu(x) = \epsilon A^{-k(\alpha + d)}\cdot  \mbox{card}(R(h))  \succcurlyeq \epsilon A^{-k\alpha} \asymp  |h|^{-\alpha}$$
Here we used the last two statements regarding $h \in G_k$ in Lemma \ref{lem: rep}, i.e. $|h|\asymp A^k$ and $\mbox{card}(R(h))  \succcurlyeq A^{kd}$. 
\end{proof}

\subsection{Measures satisfying ($B\mathcal{L}_\alpha$)}
 In this subsection, we consider a special case of condition ($\mathcal{L}_\alpha$) which we call ($B\mathcal{L}_\alpha$): 
\begin{description}
\item[($B\mathcal{L}_\alpha$)] for some fixed constants $A > 1$, $\epsilon>0$ and for any $k\in \mathbb{N}$,
\begin{enumerate}
    \item  
    there exists a ball $$M_k :=B_{g_k}(r_k)=\{g_k y:  |y|\le r_k\}$$ of radius $r_k \asymp A^k$ around $g_k \in G$ with $|g_k| \le A^k$, and
    \item $\mu(g) \ge \epsilon A^{-k(\alpha + d)}$ on $M_k \cup M_k^{-1}$
\end{enumerate}
\end{description}

For such measures, the subsets $G_k$ referenced in Lemma \ref{lem: rep} can be described explicitly as follows: 
\begin{align*}
    G_1 &=  B_e(A) \\
    G_k & =  B_e(r_k/2) - B_e(r_k/c_0), \quad k \ge 2 
\end{align*}
for some $c_0 > 2$. They satisfy the first two statement in Lemma \ref{lem: rep} trivially. To verify the last property, fix $k\in \mathbb{N}$ and take $h \in G_k$. Note that 
    \begin{align*}
        R(h) & = \{( x^{-1}g_k^{-1}, g_k y): |x| , |y| \le r^k \text{ and }x^{-1}y = h\} \\
        & \supseteq \{( x^{-1}g_k^{-1}, g_k y ): |x| \le r^k/4 \text{ and }x^{-1}y = h\} 
    \end{align*}
It follows $\mbox{card}(R(h)) \ge B_e(r_k/4) \asymp A^{kd}$ as desired. Since $\bigcup_{k\in \mathbb{N}} G_k = G$, in this case, a stronger version of Theorem \ref{thm:point_pp} holds. 

\begin{theo}
\label{thm:ball_pp}
Let $G$ be defined as above and let $\mu$ be a symmetric probability measure that satisfies ($B\mathcal{L}_\alpha$). 
There exists a constant $C=C(G,\mu)$ 
such that for any $h \in G$ and finitely supported function $f$,
\begin{align*}
    \sum_{x\in G}|f(xh)-f(x)|^2\le C |h|^\alpha \mathcal E_\mu(f,f).
\end{align*}

\end{theo}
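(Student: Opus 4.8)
The plan is to reduce Theorem~\ref{thm:ball_pp} to Theorem~\ref{thm:point_pp} by using the explicit description of the sets $G_k$ that is available under $(B\mathcal{L}_\alpha)$. Under the general hypothesis $(\mathcal{L}_\alpha)$ the sets $G_k$ produced by Lemma~\ref{lem: rep} cover only a fraction of each annulus, which is why Theorem~\ref{thm:point_pp} delivers the pseudo-Poincar\'e inequality only for $h$ in those sets. Under $(B\mathcal{L}_\alpha)$, by contrast, the $G_k$ are the full annuli $G_1=B_e(A)$ and $G_k=B_e(r_k/2)\setminus B_e(r_k/c_0)$, $k\ge 2$, and the whole content of Theorem~\ref{thm:ball_pp} is the combinatorial observation that, for $c_0$ chosen appropriately, these annuli exhaust $G$.

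First I would pin down $c_0$. Writing $a_1A^k\le r_k\le a_2A^k$, consecutive radii satisfy $r_{k+1}/r_k\le (a_2/a_1)A$, so the intervals $[r_k/c_0,\,r_k/2]$ for $k\ge 2$ are contiguous (their union is a half-line ``upward'') as soon as $r_{k+1}/c_0\le r_k/2$, i.e.\ $c_0\ge 2(a_2/a_1)A$; enlarging $c_0$ further if necessary so that $r_2/c_0\le A$ makes the lowest such annulus meet $G_1=B_e(A)$. With this choice, $\bigcup_{k\ge 1}G_k=G$. This step is elementary; the only thing to note is that $c_0$ must be taken large in terms of $A$ and the implicit constants in $r_k\asymp A^k$, not merely ``$c_0>2$''.

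Next I would observe that the proof of Theorem~\ref{thm:point_pp} already yields \eqref{eq:point_pp} for every $h\in G_k$ with a constant $C=C(G,\mu)$ \emph{independent of $k$}. Indeed, inspecting that proof: the auxiliary sub-probability measure $\nu_k$ (equal to $\epsilon A^{-k(\alpha+d)}$ on $M_k\cup M_k^{-1}$) satisfies $\nu_k\le\mu$ pointwise, so $\mathcal E_{\nu_k}\le\mathcal E_\mu$ with constant $1$ for every $k$; and $\sum_{x\in R_1(h)}\nu_k(x)=\epsilon A^{-k(\alpha+d)}\,\mathrm{card}(R(h))\succcurlyeq\epsilon A^{-k\alpha}\asymp |h|^{-\alpha}$, the comparison constants depending only on $\epsilon$, $A$, $d$ and the $\asymp$-constants in $(B\mathcal{L}_\alpha)$ and in $V(r)\asymp r^d$. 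Combining this with the covering $\bigcup_kG_k=G$ from Step~1: any $h\in G$ lies in some $G_k$, and applying \eqref{eq:point_pp} to that $h$ gives the stated inequality for all $h\in G$. (Equivalently, one can bypass the sets $G_k$ altogether: given $h\in G$, pick $k$ minimal with $r_k\ge 2|h|$, so $r_k\asymp |h|$; then $R(h)\supseteq\{(x^{-1}g_k^{-1},\,g_k xh): |x|\le r_k/4\}$ has cardinality $\succcurlyeq V(r_k/4)\asymp |h|^d$ because $|xh|\le |x|+|h|\le r_k$, and the computation in the proof of Theorem~\ref{thm:point_pp} applies verbatim; the finitely many small $h$ with $r_1\ge 2|h|$ are handled by $k=1$ together with $|h|^\alpha\ge 1$ for $h\neq e$.)

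I do not anticipate a genuine obstacle: all the real work is contained in Lemma~\ref{lem: rep} and Theorem~\ref{thm:point_pp}, and what remains is the exhaustion of $G$ by the annuli $G_k$. The two points requiring a moment of care are the choice of $c_0$ large enough to guarantee that covering, and the uniformity in $k$ of the constant $C$, which is immediate from the short proof of Theorem~\ref{thm:point_pp}.
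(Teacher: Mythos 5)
Your proposal is correct and takes essentially the same route as the paper: under $(B\mathcal{L}_\alpha)$ the sets $G_k$ of Lemma~\ref{lem: rep} are full annuli whose union is $G$, and the argument of Theorem~\ref{thm:point_pp} then applies uniformly to every $h\in G$. Your added remarks are worth keeping: the paper's phrase ``for some $c_0>2$'' is indeed too loose --- $c_0$ must be taken large enough relative to $A$ and the implicit constants in $r_k\asymp A^k$ so that consecutive annuli overlap --- and the uniformity in $k$ of the constant $C$ deserves the explicit sentence you give it, since it is what turns the per-scale estimate of Theorem~\ref{thm:point_pp} into a single inequality for all $h\in G$.
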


Another  simple observation is that the support of $\mu$ generates $G$. Namely, we have
\begin{equation}\mu^{(2)}(g)\succcurlyeq A^{-(2\alpha+d)j} \mbox{ for } g\in B_e(r_j/2).
\end{equation}
To see this  simply write
$$\mu^{(2)}(g)\ge \sum_{x^{-1}\in B_{g_j}(r_j/2)}\mu(x^{-1}g)\mu(x)\succcurlyeq A^{jd} A^{-2(\alpha+d)j}=A^{-(2\alpha +d)j}.$$
Certainly, for $j$ large enough, $\Sigma\subset B_e(r_j/2)$ and thus $\mu^{(2)}(g)>0$ on $\{e\}\cup\Sigma$ as desired. This property still holds if we relax the condition (($B\mathcal{L}_\alpha$)) by asking that it holds only for $k$ large enough. By the same token, $\mu^{(3)}(e)>0$ which shows that $\mu$ is aperiodic and its $L^2$-spectrum  is contained in $[-1+\eta,1]$.

\subsection{Coordinate-wise stable-like measures}
Fix a $k$-tuple $S$ of elements in $G$, $S=(s_1,\dots,s_k)$ and define the map $\pi_S:\mathbb Z^k\to G, \bar{a}=(a_1,\dots,a_k)\mapsto \pi_S(\bar{a})=s_1^{a_1}\dots s_k^{a_k}$. Given a probability measure  $\psi$ on $\mathbb Z^k$, we define a symmetric probability measure, $\nu_{\psi,S}$, on $G$ by (\ref{nupsi}). Here we are focusing on the case when 
$\psi(\bar{a})\asymp (1+\|\bar{a}\|)^{-\alpha-k}$.
The collection of all such measures is denoted by $M_{S,\alpha}(G)$.  The aim of this section is to prove Theorem \ref{thm:2}.
In a later section, we discuss finite convex combinations of such measures when both $S$ and $\alpha$ are allowed to vary. This is a significant generalization/variation on results contained in \cite{SCZ-nil,CKSWZ1}. In \cite{SCZ-nil}, the case when $S$ is a singleton, i.e., $S=(s)$, is treated (including convex combinations of such measures). In \cite{CKSWZ1}, finite
convex combination of measures of the type $\nu_{H,\alpha}(h)\asymp (1+|h|)^{-\alpha-d_H}$ where $H$ is a subgroup of $G$ and both $H$ and $\alpha$ are allowed to vary is treated.

Before embarking with the proof of Theorem \ref{thm:2}, observe that the condition $\psi(\bar{a})\preccurlyeq (1+\|\bar{a}\|)^{-\alpha-k}$ implies that the probability measure $\nu_{\psi,S}$ has finite $\alpha$ weak-moment on $\langle S\rangle$. This is because $|\pi(\bar{a})|_S\le \sum_1^k|a_i|\le \sqrt{k}\|\bar{a}\|$ and thus $$t^\alpha \nu_{\psi,S}( |h|>t)\le t^\alpha \sum_{\|\bar{a}\|>t/\sqrt{k}}\psi(\bar{a})\preccurlyeq 1.$$
This means that the lower bound on $\nu_{\psi,S}^{(2n)}(e)$ in Theorem \ref{thm:2} follows from the results of \cite{BSClow,SCZlow}.

\begin{exa}
Take $G$ to be the dimension-4 unipotent matrix group. A natural choice is to take $S$ to be the Mal'cev basis 
$$(M_{12}, M_{23} ,M_{13} , M_{34} , M_{24} ,M_{14} )$$
where $M_{ij}$ is the matrix with all entries set to zero except for a 1 in the $(i,j)$ position and along the diagonal. (See Appendix \ref{app: Malcev}, in particular Example  \ref{exa: matrix_coord}, for more discussion on Mal'cev basis.) Then the map $\pi_S$ gives the matrix coordinate system, i.e., 
\begin{align*}
    \pi_S(a_{14}, a_{24}, a_{34}, a_{13},a_{23},a_{12}) & = 
    M_{14}^{a_{14}}M_{24}^{a_{24}} M_{34}^{a_{34}} M_{13}^{a_{13}}M_{23}^{a_{23}}M_{12}^{a_{12}}\\
    &= \begin{pmatrix}
    1 & a_{12} & a_{13} & a_{14}\\
    0 & 1 & a_{23} & a_{24}\\
    0 & 0 & 1& a_{34} \\
    0 & 0 & 0 & 1
\end{pmatrix} 
\end{align*}
The coordinate-wise $\alpha$-stable measure  associated with the choice 
$$\psi(\bar{a})=\frac{c_\alpha}{(1+\|\bar{a}\|_2^2)^{(6+\alpha)/2}},
\bar{a} \in \mathbb Z^6, \;\|\bar{a}\|_2^2=\sum|a_{ij}|^2,$$ is approximately given by 
\begin{align*}
   \nu_{\psi, S}(\pi_S(a_{14}, a_{24}, a_{34}, a_{13},a_{23},a_{12} ))  
    \asymp   \frac{c_\alpha}{(1+\|\bar{a}\|_2^2)^{(6+\alpha)/2}} + \frac{c_\alpha}{(1+\|\bar{a}'\|_2^2)^{(6+\alpha)/2}}
\end{align*}
where
$$
\bar{a}' = (-a_{12}a_{23}a_{34} + a_{13}a_{34} - a_{14}, a_{23}a_{34} - a_{24}, -a_{34},  a_{12}a_{23} - a_{13},-a_{23},-a_{12})
$$
satisfies that $\pi_S(\bar{a})^{-1} = \pi_S(\bar{a}')$. 
\end{exa}

Below, we present two basic algebraic facts that will be frequently used in the subsequent proofs.
\begin{lem}
\label{lem: decomp}
For any $h \in G$, 
$$\sum_{g\in G} |f(gh) - f(g)|^2  = \sum_{g\in G} |f(gh^{-1}) - f(g)|^2 $$
and furthermore if it can be decomposed as a product $h=\prod_{i=1}^k h_i$, $$\sum_{g\in G} |f(gh) - f(g)|^2 \le  
   k \sum_{i=1}^k \sum_{g\in G}|f(gh_i) - f(g)|^2 $$
\end{lem}
\begin{proof}
    The first equality is a result of the Cayley's theorem, i.e. $gG = G$ for every $g \in G$. The second inequality follows from Cauchy Schwartz inequality, Cayley's theorem and a telescoping sum argument. 
\end{proof}

\begin{proof}[Proof of Theorem \ref{thm:2}]
    In this proof, We may omit the subscripts in $\nu_{\psi_\alpha, S}$ and $\psi_\alpha$. Without loss of generality, we  assume $\psi(\bar{a})\asymp (1+\|a\|)^{-\alpha-k}$ with $\alpha\in (0,2)$. First, we concentrate on proving the desired 
   pseudo-Poincar\'e inequality for element $h\in \langle S\rangle$ of the type $h=s_i^a$ in the form
   \begin{equation}\label{PPa}
       \sum_{g\in G}|f(gs_i^a)-f(g)|^2\le C |a|^\alpha \mathcal E_\nu(,f).
   \end{equation}

    For any $a \in \mathbb{Z}$ and any $\Bar{a}  = (a_1,\ldots, a_k)  \in \mathbb{Z}^k $, $s_1^a = s_1^a\pi(\bar{a}) \cdot \pi(\bar{a})^{-1}$, so by Lemma \ref{lem: decomp}, 
$$\sum_{g} |f(g s_1^{a}) - f(g)|^2 \le 2\sum_{g} |f(gs_1^a\pi(\bar{a})) - f(g)|^2 +2\sum_{g} |f(gs_1^a\pi(\bar{a})) - f(g)|^2 $$

We write $\Bar{a} \ge a$ to signify that $|a_i| \ge a$ for $i=1,\ldots, k$. The above inequality gives
\begin{align*}
    \sum_{g} &|f(g s_1^{a}) - f(g)|^2   \sum_{\Bar{a} \ge 2a} \psi(\Bar{a}) \\
    &  \le  2\sum_{\Bar{a} \ge 2a}  \sum_{g} |f(g ) - f(gs_1^{-a}\pi(\Bar{a}))|^2 \psi(\Bar{a})  +  2\sum_{\Bar{a} \ge 2a} \sum_{g} |f(g) - f(g\pi(\Bar{a}))|^2\psi(\Bar{a}) \\
    & =: 2(I_1 + I_2)
\end{align*}
Obviously we have $\psi(\Bar{a}) \le 2\nu(\pi(\Bar{a}))$ hence
$$I_2 \le 2\sum_{|\Bar{a}| \ge 2a} \sum_{g} |f(g) - f(g\pi(\Bar{a}))|^2 \nu(\pi(\Bar{a})) \preccurlyeq  \mathcal{E}_\nu(f,f).$$
Similarly, because $|\Bar{a}| \ge 2a$, we also have
$\psi(\Bar{a}) \le  2\psi(a_1-a, \ldots, a_n) \le 4\nu(s_1^{-a} \pi(\Bar{a}))$
and 
$$I_1 \le4\sum_{|\Bar{a}| \ge 2a}  \sum_{g} |f(g ) - f(gs_1^{-a}\pi(\Bar{a}))|^2 \nu(s_1^{-a }\pi(\Bar{a})) \preccurlyeq \mathcal{E}_\nu(f,f) $$
The desired inequality then follows from the following estimate
$$\sum_{|\Bar{a}| \ge 2a} \psi(\Bar{a}) \asymp \sum_{|\Bar{a}| \ge 2a} (1 + |\Bar{a}|)^{-\alpha + n} \asymp a^{-\alpha}.$$
We now proceed by induction on $i\in\{1,\dots,k\}$. Assume that 
$$\sum_{g} |f(gs_i^a) - f(g)| \le C |a|^{\alpha} \mathcal{E}_{\nu}(f,f)$$
for $i = 1,\ldots, j-1$. For every $a\in \mathbb{Z}$ and $1 \le i \le k$, define the following two subsets of $\mathbb{Z}^k$: 
\begin{align}
    U_{a,i} &:=\{
    ( \underbrace{0, \ldots,0}_{(i-1)-times}, a_i, \ldots, a_k )
   \;:\;|a_i|,|a_{i+1}|,\ldots,|a_k| \ge 2a\} \label{eq: u}\\
  D_{a,i} &:=\{
 ( a_1, \ldots, a_i, 
 \underbrace{0, \ldots,0}_{(k-i)-times} )\;:\;|a_i|,|a_{i-1}|,\ldots,|a_1| \ge 2a\} \label{eq: d}
\end{align}
For simplicity, we may also use the notation $(\textbf{0}_{i-1}, a_i, \ldots, a_k)$ to denote the element $( \underbrace{0, \ldots,0}_{(i-1)-times}, a_i, \ldots, a_k )$. Furthermore, the sum of elements in $U_{a,i}$ and $D_{a,i}$ is defined component-wise in the usual way. 

For any $a \in \mathbb{Z}$ and $u\in U_{a,j} ,d \in D_{a,j-1}$, 
$$s_j^a = \pi(d)^{-1} \cdot \pi(d) s_j^a \pi(u) \cdot \pi(u)^{-1} \pi(d)^{-1} \cdot \pi(d)$$
so 
\begin{align*}
    \sum_{g} |f(gs_j^{a}) - f(g)|^2   & \preccurlyeq  8  \sum_{g} | f(g) -f (g\pi(d))|^2 +  4\sum_{g} |f(g\pi(d)\pi(u)) - f(g) |^2  \\
       & \quad \quad + 4\sum_{g}| f(g \pi(d)s_j^{a}\pi(u))- f(g )|^2
\end{align*}

By the induction hypothesis, the first term is bounded above by some constant multiple of $|a|^\alpha\mathcal{E}_{\nu}(f,f)$, so   
\begin{align*}
    & \left(\sum_{g} |f(gs_j^{a}) - f(g)|^2\right)  \left(\sum_{
  (u,d)\in U_{a,j}\times D_{a,j-1}}
   \psi(u+d)  \right)  \\
   & \le  8 |a|^\alpha\mathcal{E}_{\nu}(f,f)  \left(\sum_{(u,d)\in U_{a,j}\times D_{a,j-1}    }
   \psi(u+d)  \right) \\
   & \quad + 4 \sum_{(u,d)\in U_{a,j}\times D_{a,j-1}}    \sum_{g}| f(g \pi(d)\pi(u))- f(g )|^2    \psi(u+d) \\
    & \quad + 4  \sum_{ (u,d)\in U_{a,j}\times D_{a,j-1}}    \sum_{g} |f(g\pi(d)s_j^{a}\pi(u)) - f(g) |^2  \psi(u+d)  =: I_1 + I_2 + I_3
   \end{align*}
Note that in $I_3$, the element $\pi(d)s_j^{a}\pi(u)$ is of the form
\begin{align*}
  \pi(a_1, \ldots, a_{j-1}, \textbf{0}_{k-j+1}) s_j^a  \pi(\textbf{0}_{j-1}, a_j,\ldots, a_k)  = s_1^{a_1}\ldots s_{j-1}^{a_{j-1}} s_j^{a+a_j} \ldots s_k^{a_k}
\end{align*}
Clearly, for any $(u,d)\in U_{a,j}\times D_{a,j-1}$, we  have 
$$\psi(u+d) \le 2 \psi(a_1,\ldots, a_{j-1}, a_j+a,a_{j+1},\ldots, a_k) \le 4\nu(\pi(u)^{-1}s_j^{a}\pi(d)) $$
With the same argument, in $I_2$, we can replace $\psi(u+d)$ with $\nu(\pi(d)\pi(u))$. We deduce $I_2 + I_3$ are bounded above by $\mathcal{E}_\nu(f,f)$. The desired inequality again follows from the observation that $\sum_{
    (u,d)\in U_{a,j}\times D_{a,j-1} }
   \psi(u+d) \asymp |a|^{-\alpha}$.
   This proves inequality  (\ref{PPa}). To obtain the pseudo-Poincar\'e inequality (\ref{PPh2} stated in Theorem \ref{thm:2}, we need an additional argument because it is often the case that $|\pi(\bar{a})|_S$ is much smaller than $\|\bar{a}\|$. For instance, assume that 
   $s_3=[s_1,s_2]=s_1s_2s_1^{-1}s_2^{-1}$
   is the commutator of $s_1$ and $s_2$. Then it is well understood that
   $|s_3^a|_S\preccurlyeq \sqrt{|a|}$. Similarly, if $s_3=[s_1,[s_1,s_2]]$ then
   $|s_3^a|_S\preccurlyeq |a|^{1/3}$, etc. See \cite[Proposition 2.17]{SCZlow} for a very general statement. Fortunately, the result we need to go from (\ref{PPa}) to (\ref{PPh2}) is exactly \cite[Theorem 2.10]{SCZlow}. 
\end{proof}

\section{Applications and variations}

\subsection{Further properties}
In this section we described further results concerning the convolution power of symmetric probability measures $\mu$ which have finite weak-$\alpha$-moment condition  and satisfy either ($B\mathcal{L}_\alpha$) or are of the type (\ref{nupsi}) when the $k$-tuple $S=(s_1,\dots,s_k)$ generates $G$  and $\psi(\bar{a}\asymp(1+\|a\|)^{-\alpha-k}$, $\alpha\in (0,2)$.  
What these measures have in common is that they satisfy 
\begin{enumerate}
    \item The $\alpha$ weak-moment condition
    $$\sup_{s>0} \{s^\alpha \mu(\{g:|g|\ge s\})\}<+\infty;$$
    \item The pseudo-Poincar\'e inequality
\begin{equation}\label{PPmu}
\sum_{g\in G}|f(gh)-f(g)|^2\le C_\mu |h|^\alpha   \mathcal E_\mu(f,f), h\in G.\end{equation}
\end{enumerate}
We have already seen that, as a consequence, such measures satisfy $\mu^{(2n)}(e)\asymp n^{-d/\alpha}$.  The results of \cite{SCZlow,CKSWZ1}
 gives the following additional properties. We let $(X_n)_0^{+\infty}$ denote the random walk driven by the measure $\mu$, and by $\mathbf P_x$ the probability distribution of $(X_n)_0^{+\infty}$ started at $X_0=x$.
 \begin{theo} \label{thm:1b} Fix $\alpha\in (0,2)$. Let $G$ be a finitely generated group with a generating set $\Sigma$ such that $V(r) \asymp r^d$ and let $\mu$ be an irreducible symmetric probability measure satisfying the properties 1 and 2 formulated above. For simplicity, assume further that $\mu(e)>0$.
 \begin{itemize}
     \item There exists a constant $C$ such that, for all integers $n,m$ and group elements $x,y\in G$,
     \begin{equation}|\mu^{(n+m)}(xy)-\mu^{(n)}(x)|\le C \left(\frac{m}{n}+ \frac{|y|^{\alpha/2}}{\sqrt{n}}\right)\mu^{(2\lceil n/2\rceil)}(e).\end{equation}
     \item There exists $\eta$ such that for all $n\ge 1$ and $x\in G$ such that $|g|^\alpha\le \eta n$, it holds that
    \begin{equation}
        \mu^{(n)}(g)\asymp n^{-d/\alpha.}
    \end{equation}
    \item For all $\epsilon>0$ there exists $\gamma>0$ such that
    for all $n$,
    \begin{equation}
        \mathbf P_e\left(\sup_{k\le n}\{|X_k|\}\ge \gamma n^{1/\alpha}\right)
\le \epsilon.    \end{equation}
  \item There exists $\epsilon, \gamma_1 >0$ and $\gamma_2 \ge 1
  $ such for 
    for all integer $n, \tau $ with 
    $\frac{1}{2} \tau^\alpha/\gamma_1 \le n \le \tau^\alpha/\gamma_1$
    \begin{equation}
        \inf_{x:|x|\le \tau^\frac{1}{\alpha}}\left\{ \mathbf P_x\left(\sup_{k\le n}\{|X_k|\}\le \gamma_2 \tau ; |X_n|\le \tau\right)\right\}
\ge \epsilon\end{equation}    
 \end{itemize}
 \end{theo}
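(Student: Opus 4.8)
All four estimates are standard consequences of the two structural assumptions -- the $\alpha$ weak-moment bound (property 1) and the pseudo-Poincar\'e inequality (\ref{PPmu}) (property 2) -- combined with the on-diagonal estimate $\mu^{(2n)}(e)\asymp n^{-d/\alpha}$ already in hand and with the hypothesis $\mu(e)>0$, which forces the $\ell^2(G)$-spectrum of the Markov operator $Pf(x)=\sum_y f(xy)\mu(y)$ to lie in $[-1+\eta,1]$ for some $\eta>0$. In the generality required here the whole package is established in \cite{SCZlow} (see also \cite{CKSWZ1}); the plan is to recall how each bullet is extracted from these inputs and to single out the one step that carries the real difficulty.

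For the first bullet I would separate the time increment from the space increment via $|\mu^{(n+m)}(xy)-\mu^{(n)}(x)|\le|\mu^{(n+m)}(xy)-\mu^{(n)}(xy)|+|\mu^{(n)}(xy)-\mu^{(n)}(x)|$. Writing $\mu^{(k)}$ also for the function $g\mapsto P^k\delta_e(g)$ and using the self-adjointness identities $\mu^{(k)}(g_1)-\mu^{(k)}(g_2)=\langle P^{\lfloor k/2\rfloor}\delta_e,\,P^{\lceil k/2\rceil}(\delta_{g_1}-\delta_{g_2})\rangle$ and $\mu^{(n+m)}(g)-\mu^{(n)}(g)=\langle(P^m-I)P^{\lceil n/2\rceil}\delta_e,\,P^{\lfloor n/2\rfloor}\delta_g\rangle$, one reduces the space term to estimating $\mu^{(n)}(e)-\mu^{(n)}(y)$, which is precisely the left-hand side of (\ref{PPmu}) applied to $f=\mu^{(n/2)}$: this gives $\mu^{(n)}(e)-\mu^{(n)}(y)\preccurlyeq(|y|^\alpha/n)\,\mathcal E_\mu(\mu^{(n/2)},\mu^{(n/2)})$, after which the routine spectral bound $\mathcal E_\mu(\mu^{(k)},\mu^{(k)})=\langle(I-P)P^{2k}\delta_e,\delta_e\rangle\preccurlyeq k^{-1}\mu^{(2\lceil k/2\rceil)}(e)$ (using the spectrum $\subset[-1+\eta,1]$) together with $\mu^{(2k)}(e)\asymp k^{-d/\alpha}$ closes the estimate. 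The time term is handled in the same spirit using $\|(P^m-I)P^{k}\|_{2\to2}\preccurlyeq m/k$ on $[-1+\eta,1]$. Reassembling the two contributions produces the displayed inequality.

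The second bullet follows at once: with $y=g$ and $m=0$ in the first bullet, $\mu^{(n)}(g)\ge\mu^{(n)}(e)-|\mu^{(n)}(g)-\mu^{(n)}(e)|\ge(c-C|g|^{\alpha/2}n^{-1/2})\mu^{(n)}(e)$, which is $\succcurlyeq n^{-d/\alpha}$ as soon as $|g|^\alpha\le\eta n$ with $\eta$ small enough -- here one uses $\mu^{(n)}(e)\succcurlyeq n^{-d/\alpha}$ for both parities of $n$, the odd case relying on $\mu(e)>0$ -- while the matching upper bound is simply $\mu^{(n)}(g)\le\mu^{(2\lfloor n/2\rfloor)}(e)\asymp n^{-d/\alpha}$.

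The last two bullets are the trajectory-level statements, and the third is where the genuine work lies. For it I would combine a one-step large-jump estimate -- by property 1, $\mathbf P_e(\exists\,k\le n:|\xi_k|\ge R/2)\le n\,\mu(\{g:|g|\ge R/2\})\preccurlyeq nR^{-\alpha}$, which is $<\epsilon/2$ once $R=\gamma n^{1/\alpha}$ with $\gamma$ large -- with a bound on the probability that the walk exits $B_e(R)$ by time $n$ using only jumps of size $<R/2$; the latter comes from a stopping-time argument at the first exit of $B_e(R)$ (the subadditivity $|gh|\le|g|+|h|$ of the word length controlling the overshoot) together with the sub-diagonal decay of $\mu^{(k)}$ encoded in $\mu^{(k)}(e)\asymp k^{-d/\alpha}$ and (\ref{PPmu}). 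This is exactly the maximal/exit-time machinery of \cite{SCZlow}, which applies verbatim given properties 1, 2 and the second bullet. The fourth bullet then drops out: by the near-diagonal estimate and left-invariance, for $n\asymp\tau^\alpha$ and $x$ in the allowed range the kernel $g\mapsto\mu^{(n)}(x^{-1}g)$ is $\succcurlyeq n^{-d/\alpha}$ on a ball of radius $\asymp n^{1/\alpha}$ contained (for suitable constants) in $B_e(\tau)$, whose volume is $\asymp n^{d/\alpha}$, so $\mathbf P_x(|X_n|\le\tau)\succcurlyeq 1$; intersecting with $\{\sup_{k\le n}|X_k|\le\gamma_2\tau\}$, whose complement has probability $<\epsilon/2$ by the third bullet once $\gamma_2$ is chosen large relative to $\gamma_1$, yields the asserted uniform lower bound. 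The main obstacle throughout is precisely this third step -- turning analytic heat-kernel bounds into an honest maximal inequality on a non-commutative group -- but it is already carried out in \cite{SCZlow}, and nothing beyond properties 1, 2 and the on-diagonal asymptotics is needed to invoke it.
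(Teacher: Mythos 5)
Your proposal is correct and follows the same route as the paper: the authors do not give a self-contained proof of Theorem~\ref{thm:1b} but observe that once properties 1 and 2 and the on-diagonal estimate $\mu^{(2n)}(e)\asymp n^{-d/\alpha}$ are in hand, all four bullets are supplied by the general machinery of \cite{SCZlow} and \cite{CKSWZ1} (in particular \cite[Proposition 1.4]{SCZlow} and \cite[Section 5]{CKSWZ1}). Your sketch of how the regularity estimate, the near-diagonal bound, and the control/strong-control statements are extracted from the pseudo-Poincar\'e inequality, the weak $\alpha$-moment, and the spectral gap $\mu(e)>0$ is an accurate unpacking of exactly those references and matches the paper's intended argument.
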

\begin{rem} If $\mu(e)=0$ but $\mu$ is aperiodic, the second bullet must be adjusted by replacing ``$n\ge 1$'' by ``$n\ge n_0$ for some $n_0$.'' One can take $n_0$ to be the smallest integer $m$ such that $\mu^{(m)}(e)>0$.
    \end{rem}
The first bullet is a sort of regularity estimate in time and space. It leads to the second bullet which is called a ``near-diagonal estimate.'' This is to be compared  with ``diagonal estimates'' (estimates for $\mu^{(n)}(e)$) and off-diagonal estimates (estimates of $\mu^{(n)}(x)$ for all $n$ and $x$ describing the decay to $0$ when $x$ tends to infinity in $G$). The last two bullets answer classical questions regarding exit times of balls for the associated random walk. They are discussed in \cite{SCZlow,CKSWZ1} under the names of ``control'' and ``strong control.'' See \cite[Proposition 1.4]{SCZlow} and \cite[Section 5]{CKSWZ1}.

\subsection{Variations }

Let us briefly discuss two types of interesting variations on measures satisfying finite weak-$\alpha$-moment and either ($B\mathcal{L}_\alpha$) condition or being of  coordinate-wise type (\ref{nupsi}).

\subsubsection*{Regular variation} First, let us observe that if a symmetric probability measure $\mu$ satisfies the finite weak-$\alpha$-moment and the ($B\mathcal{L}_\alpha$) condition 
or is a coordinate-wise measure with generating tuple $S$, and if in these conditions $\alpha>2$, then $\mu$ has finite second moment and satisfy
$$\sum_{g\in G}|f(gh)-f(g)|^2\le C_\mu |h|^2  \mathcal E_\mu(f,f), h\in G.$$ It follows that
$\mu^{(2n)}(e)\asymp n^{-d/2}$ for $n\ge 1$ and all the conclusions stated in Theorem \ref{thm:1b} holds for such measure with $\alpha$ replaced by $2$. 

Now,  let $\alpha\in (0,2)$ and $\ell: (0,+\infty)\to (0,+\infty)$ be a slowly varying function. Define Condition ($\mathcal{U}_{\alpha,\ell}$) by replacing the power function $t\mapsto t^\alpha$ in the finite weak-$\alpha$-moment condition by the regularly varying function $t\mapsto t^\alpha \ell(t)$. Define ($B\mathcal{L}_{\alpha,\ell}$) by replacing $$A^{-k(\alpha +d)} \mbox{ by }A^{-k(\alpha+d)}\ell(A^{-k})$$ in  ($B\mathcal{L}_{\alpha}$). For coordinate-wise measures, replace the condition
$$\psi(\bar{a})\asymp (1+\|\bar{a}\|)^{-\alpha-k} \mbox{ by
}\psi(\bar{a})\asymp \frac{1}{(1+\|\bar{a}\|)^{\alpha+k}\ell(1+\|\bar{a}\|)}.$$

The same techniques described above lead to sharp two-sided estimates for $\mu^{(2n)}(e)$ under such hypotheses. See \cite{BSClow,SCZ-nil,CKSWZ1} for the treatment of similar but different examples involving slowly varying functions. Precise statements involve dealing with various classical computation regarding the function $\ell$ and computing the inverse of $t\mapsto t^\alpha \ell(t)$.

\begin{rem} The case $\alpha=2$ is omitted above because it requires different arguments. The technique used here to obtain the key sharp pseudo-Poincar\'e does not provide the relevant sharp pseudo-Poincar\'e inequality in the case $\alpha=2$ and related regular variation cases.  
\end{rem}

\subsubsection*{Lack of symmetry}A second variation of interest concerns the non-symmetric case and we discuss it briefly. 
Let us replace  
($B\mathcal{L}_\alpha$)  by its non-symmetric version:
\begin{description}
    \item[] There exists a ball $M_k :=B_{g_k}(r_k)$ of radius $r_k \asymp A^k$ around $g_k \in G$ with $|g_k| \le A^k$ such that 
    $\mu(g) \ge \epsilon A^{-k(\alpha + d)}$ on $M_k$
\end{description}
A well-known approach to study upper-bounds on $\mu^{(n)}(e)$ when $\mu$ may not be symmetric is to consider the  multiplicative symmetrization $\check{\mu}*\mu$ where $\check{\mu}(x)=\mu(x^{-1})$. Indeed, a Nash inequality of the type
\begin{equation}\label{Ncheck} \|f\|_2^{2+2\alpha/d}\le C\mathcal E_{\check{\mu}*\mu}(f,f)\|f\|_1^{2\alpha/d}\end{equation} implies that \begin{equation}
\label{UBcheck} \sup_{g\in G}\{\mu^{(n)}(g)\}\le C(C,\alpha,d)n^{-d/\alpha}.\end{equation}

Because the measure $\check{\mu}*\mu$ is symmetric, in order to prove that it satisfies ($B\mathcal{L}_\alpha$) it suffice to show that
$$\check{\mu}*\mu(g)\ge \epsilon^2 A^{-k(\alpha+d)} \mbox{ on } B_{g_k}(r_k-r_0-1).$$
This is easy to see because, for $g\in B_{g_k}(r_k/2)$,
\begin{eqnarray*} \check{\mu}*\mu(g)&=& \sum_{x\in G}\check{\mu}(x^{-1}g)\mu(x)=  \sum_{x\in G}\check{\mu}(x)\mu(gx^{-1})\\
&=&\sum_{x\in B_0^{-1}} \mu(x^{-1})\mu(gx^{-1})\\
&\ge & \epsilon^{2} A^{-k(\alpha+d)}.
\end{eqnarray*}
(The measure $\check{\mu}*\mu$ also satisfies $\check{\mu}*\mu(e)>0$).  Applying Theorem \ref{thm:ball_pp} to $\check{\mu}*\mu$ gives a pseudo-Poincar\'e inequality which, in turns, together with the volume growth estimate, gives the Nash inequality (\ref{Ncheck}) and the upper-bound (\ref{UBcheck}).

The same argument applies to coordinate-wise measures where now we define the non-symmetric version $\mu_\psi$  of $\nu_\psi$ by
$$\mu_\psi(h)=\sum_{\bar{a}\in \mathbb Z^d: \pi_S(\bar{a})=h}\psi (\bar{a})$$
and we assume $\psi(\bar{a})\succcurlyeq (1+\|\bar{a}\|)^{\alpha-k}$ and $S$ is generating.  The measure $\mu_\psi$ satisfies $$\sup_{g\in G}\{\mu_\psi^{(n)}(g)\}\preccurlyeq n^{-d/\alpha}.$$

Obtaining a good lower bound for $\sup_{g\in G}\{\mu^{(n)}(g)\}$ in any of those non-symmetric cases is a subtle open question. In many cases, one expects that the upper-bounds described above  will not be sharp when $\mu$ is not symmetric.

\section{Multi-strength stable-like measures}

In this section, we will investigate a large family of convex combinations of stable-like probability measures and, more specifically, the properties that enable us to obtain estimates of the convolution powers of such measures. The arguments in this section make heavy use of the results in \cite{SCZ-nil,CKSCWZ}. The main result provides a useful variation and complement to \cite{CKSCWZ}. 

Recall that the word-length distance associated with a finite subset $\Sigma$
of $G$ is defined as 
$$|g|_{\Sigma}  = \inf \{m: \exists \omega \in (\Sigma \cup \Sigma^{-1})^{m}, g =\omega \text{ in }G\}$$
and if $g \not\in \langle \Sigma \rangle$, we set $|g|_\Sigma = \infty$.

In this section, we'll focus on symmetric measures that are convex combination of measures satisfying the properties ($\mathcal{U}_{\Sigma, \alpha}$) and ($\mathcal{PP}_{\Sigma, \alpha}$) defined below, where $\Sigma$ is a finite subset of $G$ and  $\alpha \in (0,2)$. The two parameters $\Sigma$ and $\alpha$ are allowed to vary in the given convex combination. These two properties are defined as follows:
     \begin{description}
    \item [($\mathcal{U}_{\Sigma, \alpha}$)] The measure $\nu$ satisfies $supp(\nu) \subseteq \langle \Sigma\rangle$ and the tail and truncated second moment have order no greater than $\alpha$, i.e. 
    $$ \sum_{|h|_\Sigma \ge t} \nu(h) \preccurlyeq t^{-\alpha} \quad \mbox{ and }\quad t^{-2}\sum_{|h|_\Sigma \le t} |h|_\Sigma^2 \nu(h) \preccurlyeq t^{-\alpha}.$$
   
    \item [($\mathcal{PP}_{\Sigma, \alpha}$)] 
    For all $h \in \langle \Sigma \rangle$ and finitely supported $f$, 
    $$\sum_{g \in G} |f(gh) - f(g)|^2 \le C|h|_\Sigma^{\alpha} \mathcal{E}_\mu(f,f)$$
\end{description}
 
\begin{theo}
\label{thm: gamma}
    Let $G$ be a finitely generated group of polynomial growth and suppose $\mu = \sum_{i=1}^k \nu_i$
    is a probability measure that is a finite combination of symmetric probability measures.
    Fix a collection of finite subsets $\Sigma_i\subset G$ and $\alpha_i\in (0,2)$, $i\in \{1,\dots,,k\}$. There exists 
    $$\gamma := \gamma(G, \Sigma_1,\ldots,\Sigma_k,\alpha_1,\ldots, \alpha_k) \in \mathbb{R}_+$$ such that:
\begin{enumerate}
    \item 
    if $\nu_i$ satisfies ($\mathcal{U}_{\Sigma_i, \alpha_i}$), $1\le i\le k$, then  $$\mu^{(2n)}(e) \succcurlyeq n^{-\gamma}$$
    \item if $\nu_i$ satisfies ($\mathcal{P}\mathcal{P}_{\Sigma_i, \alpha_i}$),$1\le i\le k$, and the union  $\Sigma_1\cup \ldots \Sigma_k$ generates $G$, then
$$\mu^{(2n)}(e) \preccurlyeq n^{-\gamma}$$

\end{enumerate}
    
\end{theo}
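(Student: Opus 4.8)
The plan is to reduce both halves of Theorem~\ref{thm: gamma} to a single combinatorial quantity through the standard Nash-inequality / Faber--Krahn mechanism. The exponent $\gamma$ will be the growth degree of the family of anisotropic balls
\[
 Q_t \;:=\; \bigl\{\, h_1h_2\cdots h_k \;:\; h_i\in\langle\Sigma_i\rangle,\ |h_i|_{\Sigma_i}\le t^{1/\alpha_i}\ \text{for }1\le i\le k \,\bigr\},\qquad t\ge 1.
\]
The first step is to record that, since $G$ has polynomial volume growth (hence is virtually nilpotent), $\mathrm{card}(Q_t)\asymp t^{\gamma}$ for a well-defined $\gamma=\gamma(G,\Sigma_1,\dots,\Sigma_k,\alpha_1,\dots,\alpha_k)>0$; this weighted/anisotropic volume computation is of the type carried out in \cite{SCZ-nil,CKSCWZ} (in a nilpotent Lie group it reads off the dilation structure of the graded Lie algebra attached to the weights $1/\alpha_i$ on the elements of $\Sigma_i$, and one descends to $G$ by the usual finite-index and quotient reductions). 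The hypothesis that $\Sigma_1\cup\cdots\cup\Sigma_k$ generates $G$ enters only in part (2): it guarantees $\bigcup_t Q_t=G$, whereas in general $Q_t$ exhausts only $\langle\Sigma_1\cup\cdots\cup\Sigma_k\rangle$.

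For the upper bound in part (2), I would turn the pseudo-Poincar\'e inequalities $(\mathcal{PP}_{\Sigma_i,\alpha_i})$ and the volume estimate $\mathrm{card}(Q_t)\asymp t^{\gamma}$ into a Nash inequality. Fix $t\ge 1$ and let $q_t$ be the normalized uniform measure on $Q_t$, so that $\|q_t\|_\infty\asymp t^{-\gamma}$. For $h=h_1\cdots h_k\in Q_t$, the telescoping estimate of Lemma~\ref{lem: decomp}, followed by $(\mathcal{PP}_{\Sigma_i,\alpha_i})$ applied to each factor and by $\mathcal{E}_{\nu_i}\preccurlyeq\mathcal{E}_\mu$, gives
\[
 \sum_{g\in G}|f(gh)-f(g)|^2 \;\preccurlyeq\; \Bigl(\sum_{i=1}^k |h_i|_{\Sigma_i}^{\alpha_i}\Bigr)\mathcal{E}_\mu(f,f) \;\le\; k\,t\,\mathcal{E}_\mu(f,f).
\]
Averaging in $h$ against $q_t$ yields $\|f-f*q_t\|_2^2\preccurlyeq t\,\mathcal{E}_\mu(f,f)$, while $\|f*q_t\|_2^2\le\|f\|_1^2\|q_t\|_\infty\preccurlyeq t^{-\gamma}\|f\|_1^2$; hence $\|f\|_2^2\preccurlyeq t\,\mathcal{E}_\mu(f,f)+t^{-\gamma}\|f\|_1^2$ for all $t\ge 1$. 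Optimizing over $t$ produces the Nash inequality $\|f\|_2^{2+2/\gamma}\preccurlyeq\mathcal{E}_\mu(f,f)\|f\|_1^{2/\gamma}$, which is equivalent to $\mu^{(2n)}(e)\preccurlyeq n^{-\gamma}$ (see \cite[Appendix]{SCIso}, \cite[Chapters VI and VII]{VSCC}, \cite{CNash}).

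For the lower bound in part (1), I would feed the tail and truncated-second-moment conditions $(\mathcal{U}_{\Sigma_i,\alpha_i})$ into the lower-bound machinery of \cite{SCZ-nil,CKSCWZ}, itself built on \cite{BSClow,SCZlow}. The conceptual core is that these conditions say $\mu$ is ``no more spread out'' than the model anisotropic measure, so one can proceed either by comparison of Dirichlet forms --- $\mathcal{E}_\mu\preccurlyeq\mathcal{E}_{\mu_0}$ with $\mu_0=\sum_i\rho_i$ and $\rho_i(h)\asymp(1+|h|_{\Sigma_i})^{-\alpha_i-d_i}\mathbf{1}_{\langle\Sigma_i\rangle}(h)$ the model $\alpha_i$-stable measure on $\langle\Sigma_i\rangle$, reducing matters to $\mu_0^{(2n)}(e)\asymp n^{-\gamma}$ --- or by a direct Faber--Krahn estimate: a tent function $f_n$ supported on $Q_{c_1n}$ and equal to $1$ on $Q_{c_2n}$ has $\mathcal{E}_\mu(f_n,f_n)\preccurlyeq n^{-1}\mathrm{card}(Q_{c_1n})$ (the near jumps controlled by the truncated second moment and the Lipschitz-type behaviour of $f_n$ in each $\Sigma_i$-direction, the far jumps by $\mathrm{card}(Q_{c_1n})$ times the tail $\preccurlyeq n^{-1}$) while $\|f_n\|_2^2\succcurlyeq\mathrm{card}(Q_{c_2n})$, so $\lambda_1(Q_{c_1n})\preccurlyeq 1/n$ and hence $\mu^{(2n)}(e)\succcurlyeq\mathrm{card}(Q_{c_1n})^{-1}\asymp n^{-\gamma}$. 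Either way the exponent produced is the $\gamma$ of the first step.

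The main obstacle is the bookkeeping in that first step: establishing $\mathrm{card}(Q_t)\asymp t^{\gamma}$ with a well-defined exponent and, crucially, checking that the same $\gamma$ controls the comparison-to-model step and the tent/boundary estimates on the lower-bound side. In a virtually nilpotent group this rests on the explicit anisotropic dilations on the associated graded Lie algebra (a Guivarc'h-type volume formula), but keeping the tent function genuinely controlled in each non-commuting $\Sigma_i$-direction and estimating the boundary layer $Q_{c_1n}\setminus Q_{c_2n}$ are delicate --- this is precisely where the technical machinery imported from \cite{SCZ-nil,CKSCWZ} does the work.
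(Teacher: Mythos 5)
Your reduction to a Nash inequality and a Faber--Krahn estimate follows the right template, but the set $Q_t$ you choose is genuinely the wrong object, and this breaks both halves of the argument. The product set
$Q_t=\{h_1\cdots h_k: h_i\in\langle\Sigma_i\rangle,\ |h_i|_{\Sigma_i}\le t^{1/\alpha_i}\}$
takes exactly one factor from each $\langle\Sigma_i\rangle$ in a fixed order, so it never sees the elements of $G$ that are reached efficiently through commutators. Take the discrete Heisenberg group $G=\langle a,b\rangle$ with $c=[a,b]$ central, $\Sigma_1=\{a\}$, $\Sigma_2=\{b\}$, $\alpha_1=\alpha_2=\alpha$. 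Then $Q_t=\{a^mb^n:|m|,|n|\le t^{1/\alpha}\}$ has cardinality $\asymp t^{2/\alpha}$, whereas the exponent produced by the weight system of the paper (Equation~\ref{eq:gamma}) is $\gamma=4/\alpha$, because $c^p$ with $|p|\lesssim t^{2/\alpha}$ can be written using only $\lesssim t^{1/\alpha}$ copies of $a$ and of $b$. So $\mathrm{card}(Q_t)\asymp t^{\gamma'}$ with $\gamma'<\gamma$ in general: in particular the claim ``$\mathrm{card}(Q_t)\asymp t^{\gamma}$ reads off the dilation structure of the graded Lie algebra'' is not correct, because $Q_t$ as you define it is a ``thin'' slice that does not scale under those dilations.

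This has two consequences. For the upper bound, your Nash argument is internally consistent but, with the deflated volume exponent $\gamma'$, it only yields $\mu^{(2n)}(e)\preccurlyeq n^{-\gamma'}$, which is weaker than the theorem asserts. For the lower bound the situation is worse: the tent function supported on $Q_{c_1n}$ is not an admissible test function because $Q_t$ is not approximately closed under small right-multiplications. In the Heisenberg example, right-multiplying $a^mb^n$ by $a$ produces $a^{m+1}b^nc^{\pm n}\notin Q_t$ whenever $n\ne 0$, so a single bounded-length jump of constant $\mu$-mass sends a positive fraction of $Q_{c_2n}$ out of $Q_{c_1n}$. The Dirichlet energy of the tent is then $\gtrsim\mathrm{card}(Q_{c_2n})$, not $\preccurlyeq n^{-1}\mathrm{card}(Q_{c_1n})$, so the Rayleigh quotient is $\gtrsim 1$ and no lower bound on $\mu^{(2n)}(e)$ comes out. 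The alternative comparison $\mathcal{E}_\mu\preccurlyeq\mathcal{E}_{\mu_0}$ you mention does not follow from $(\mathcal{U}_{\Sigma_i,\alpha_i})$ either, since that hypothesis only bounds tails and truncated second moments, not the pointwise size of $\nu_i$, so $\nu_i$ may be much larger than $\rho_i$ on a sparse set.

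The paper circumvents both problems by replacing $Q_t$ with the sublevel set of the weighted word-degree quasi-norm $\|\cdot\|_G$ of Definition~\ref{Gnorm}. This set has the correct cardinality $\asymp R^{\gamma}$ (Theorem~\ref{thm:vol} via Lemma~\ref{lem:norm_comp}), is approximately stable under bounded right-multiplications because $\|\cdot\|_G$ is a quasi-norm, and its elements admit the controlled factorization $h=\kappa_0\prod_i\xi_i^{x_i}\kappa_j$ of \cite[Corollary 3.23]{CKSWZ1}, which is what replaces your telescoping through $h_1\cdots h_k$ and allows the pseudo-Poincar\'e inequality of Theorem~\ref{thm: PPpoly} to reach all of $G$. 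The lower bound then follows from Theorem~\ref{thm:zeta}, where the tent is $\zeta_R=(R-\|g\|_G^{w^*})_+$ and the near-jump/far-jump split is controlled by $(\mathcal{U}_{\Sigma_i,\alpha_i})$ precisely because $\|\cdot\|_G$ varies slowly under jumps of bounded $|\cdot|_{\Sigma_i}$-length. In short, your scheme is repairable only by replacing $Q_t$ with the $\|\cdot\|_G$-ball, which is exactly the mechanism the paper imports from \cite{SCZ-nil,CKSCWZ}.
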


\begin{rem} The number $\gamma$ is described explicitly in Equation \ref{eq:gamma}. 
 \end{rem}

In addition to proving the main theorem above,  we will discuss  the coordinate-wise $\alpha$-stable-like measures of type (\ref{nupsi}) that do
not necessarily satisfy the hypothesis in Theorem \ref{thm: gamma} and prove matching upper and lower bounds for probability of returns using a comparison of Dirichlet forms in Section \ref{sec: polygrowth}. 

\subsection{Weight systems}
The study of the convolution powers 
of measures described in Theorem \ref{thm: gamma} is based on the introduction of an appropriate geometry on the group $G$. This is based on \cite{SCZ-nil,CKSCWZ}. When $G$ itself is nilpotent, the arguments
are detailed in \cite{SCZ-nil}. We first explain the construction in this simpler case.

\subsubsection*{The nilpotent case}
\label{nilcase} Assume $G$ is nilpotent and $\mu$ satisfies the hypotheses in Theorem \ref{thm: gamma} (1) and (2). 
 Note that the group  $\langle \Sigma_i\rangle$ is nilpotent with some growth degree $d_{\Sigma_i}$ and, the hypotheses on $\nu_i$ implies that it satisfies $\nu_i^{(n)}(e)\asymp n^{-d_{\Sigma_i}/\alpha_i}$. 

By hypothesis, $\Sigma := \bigcup_{i=1}^k \Sigma_i$ generates $G$. 
Here, each $\Sigma_i$ is regarded as an alphabet and $\Sigma$ is simply the concatenation of these alphabets where repetition is allowed. Each element $\sigma$ of $\Sigma$ is equipped with an associated $\alpha(\sigma)=\alpha_i\in (0,2)$ if $\sigma$ comes from $\Sigma_i$ or $\Sigma_i^{-1}$ and we set $w(\sigma)=1/\alpha(\sigma)$. This is the weight of $\sigma$ and we propagate this weight along the set of formal commutators of the finite alphabet $\Sigma$ as prescribed in \cite{SCZ-nil}. Informally, 
if $c=[c_1,c_2]$ is a formal commutator of two previously considered formal commutators, then $w(c)=w(c_1)+(c_2)$ (see \cite[Section 1.5]{SCZ-nil}).  Now, for any element $g\in G$, we let
$$
    \|g\|=  \inf\left\{\max_{\xi \in \Sigma} 
    \left\{\left(\mbox{deg}_\xi(\theta) \right)^\frac{1}{w(\xi)} \right\}\;:\; \theta \in \cup_{m=0}^\infty \Sigma
    ^m, g = \theta \in G\right\}
$$

In words, we write $g$ as a finite word $\theta$ over $\Sigma\cup\Sigma^{-1}$. For each $\xi\in \Sigma$, $\mbox{deg}_\xi(\theta)$
is the number of times $\xi$ is used in the word $\theta$. The ``norm'' of the word $\theta$ is the maximum over $\xi\in \Sigma$ of  $\mbox{deg}_\xi(\theta)^{1/w(\xi)}$. And we then minimize over the possible different ways to write $g$ as a word.  Using the key results of \cite{SCZ-nil}, especially \cite[Theorem 2.10]{SCZ-nil}, it follows from the hypotheses $(\mathcal{PP}_{\Sigma_i, \alpha_i})$ for all $\nu_i$ that this measure satisfies
\begin{equation} \label{PPnil} \sum_{g\in G}|f(gh)-f(g)|^2\le C_\mu\|h\|\mathcal E_{\mu}(f,f),  \;\;h\in G,\;f\in L^2(G).
\end{equation}
Moreover, \cite{SCZ-nil} provides a sharp estimate on
$\#\{g\in G: \|g\|\le R\}.$
Namely, there is a positive real $\gamma_\mu$ which is computed  explicitly using the nilpotent structure of $G$ and the weight system $w$ such that
$$\#\{g\in G: \|g\|\le R\}\asymp R^{\gamma_{\mu}}.$$
See \cite[Definition 1.7]{SCZ-nil} and set $\gamma_\mu=D(\Sigma,\mathfrak w)$
where $\mathfrak w$ denote the weight system associated with the weight $w$ defined above (see also Theorem \ref{thm:vol} below). 

 Together with (\ref{PPnil}), this volume estimate gives $\mu^{(2n)}(e)\preccurlyeq n^{-\gamma_\mu}$. A 
 matching lower bound can be derived from assumption $(\mathcal{U}_{\Sigma_i, \alpha_i})$ for each $\nu_i$, which  will be apparent below.

\subsubsection*{Groups of polynomial growth}
If applied directly on a group $G$  of polynomial volume growth, the construction described above for nilpotent groups may fail to capture the key properties of the measure $\mu$. We now follow \cite{CKSCWZ} to construct a proper norm  $\|\cdot\|_G$ in this case. 
 Gromov's theorem states that $G$ has a normal nilpotent subgroup with finite index. For the rest of this section, we assume that $G$ has a normal nilpotent subgroup $N$ with a finite set of right coset representatives denoted $X = \{x_0 = e, x_1, \ldots, x_p\}$ that $\mu$ is a symmetric probability measure satisfying the hypotheses in Theorem \ref{thm: gamma} (1) and  (2). 
\begin{defin} 
\label{Gnorm}
(Norms on $G$)
    Let $S_0$ be a finite  symmetric  generating set of $G$ and for $i=1,\ldots, k$, let $S_i$ be a symmetric finite  generating set of $N_i := N \cap \langle \Sigma_i \rangle$. Let $\Sigma_G$
     be the tuple obtained as the formal union of $S_0, \ldots, S_k$ where repetition is allowed. For $\xi \in \Sigma_G$, set
$$  w_G(\xi) = \begin{cases}
  \frac{1}{\alpha_i} &\text{ if }\xi \in S_i\\
  \frac{1}{2} &\text{ if }\xi\in S_0  
\end{cases}$$
Define the (quasi-)norm $\|\cdot\|_G$ on $G$
     as follows:
     $$\|g\|_G = \inf\left\{\max_{\xi \in \Sigma_G} 
    \left\{\left(\mbox{deg}_\xi(\theta) \right)^\frac{1}{w_G(\xi)} \right\}\;:\; \theta \in \cup_{m=0}^\infty \Sigma_G^m
, g = \theta \in G\right\}$$
     \end{defin}

\begin{defin}\label{Nnorm} (Norms on $N$)
Let $\Xi_0$ be a finite symmetric generating set of $N$ and $i = 1,\ldots, k$, define 
$$\Xi_i = \bigcup_{j=0}^p x_j S_i x_j^{-1}$$
Let $\Sigma_N$ be the tuple which is the formal union of $\Xi_0, \ldots, \Xi_k$ where repetition is allowed. For each $\xi \in \Sigma_N$, set 
$$  w_N(\xi) = \begin{cases}
  \frac{1}{\alpha_i} &\text{ if }\xi \in \Xi_i \\
  \frac{1}{2} &\text{ if }\xi\in \Xi_0  
\end{cases}$$
Define the (quasi-)norm $\|\cdot \|_N$
on $N$ as follows 
 $$\|n\|_N = \inf\left\{\max_{\xi \in \Sigma_N} 
    \left\{\left(\mbox{deg}_\xi(\theta) \right)^\frac{1}{w_N(\xi)} \right\}\;:\; \theta \in \cup_0^\infty (\Sigma_N \cup \Sigma_N^{-1})^m, n = \theta \in N\right\}$$
\end{defin}

The following lemma offers a comparison of volume estimates across different norms.
\begin{lem}[{\cite[Theorem 3.2.1]{CKSCWZ}}]
\label{lem:norm_comp}  For any $g\in N$, $\|g\|_N\asymp \|g\|_G$ and
for any $R \ge 1$, 
    $$\# \{g \in G: \|g\|_G\le R\} \asymp \#  \{g \in N: \|g\|_G\le R\}\asymp \#  \{g \in N: \|g\|_N\le R\} $$
\end{lem}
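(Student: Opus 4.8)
The plan is to prove the three asserted equivalences by moving along the chain
$$\#\{g\in G:\|g\|_G\le R\}\;\asymp\;\#\{g\in N:\|g\|_G\le R\}\;\asymp\;\#\{g\in N:\|g\|_N\le R\},$$
together with the pointwise statement $\|g\|_N\asymp\|g\|_G$ for $g\in N$, which in fact immediately gives the second $\asymp$ above. Since this is quoted from \cite[Theorem 3.2.1]{CKSCWZ}, the role of the proof here is to recall the structure of the argument and indicate why it goes through in the present setting.

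\medskip
\textbf{Step 1: the pointwise comparison $\|g\|_N\asymp\|g\|_G$ on $N$.} First I would observe that one inequality is essentially formal: any word $\theta$ over $\Sigma_N$ representing $n\in N$ can be rewritten as a word over $\Sigma_G$ of comparable weighted degree, because each generator $\xi=x_jsx_j^{-1}\in\Xi_i$ (with $s\in S_i$) is a product of a bounded number of letters from $S_0$ (to write $x_j^{\pm1}$) and one letter from $S_i$; conjugation by the fixed finite set $X$ only inflates $S_0$-degrees by a bounded factor, and $w_G$ on $S_0$ is $1/2$, which is the \emph{largest} weight appearing, so the $S_0$-contribution is harmless. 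Hence $\|n\|_G\preccurlyeq\|n\|_N$. For the reverse, given a word $\theta$ over $\Sigma_G$ representing $n\in N$, one uses the finite-index structure: reading $\theta$ from left to right one passes through cosets $Nx_{j}$, and one can reorganize $\theta$ so that each $S_i$-letter gets conjugated into $N$ by the current coset representative, producing an $\Xi_i$-letter, while the $S_0$-letters that were used to change cosets are absorbed into a bounded-length $\Xi_0$-word each time the coset returns to $N$. The key point is that the number of coset changes is at most the total length of $\theta$, but we only need to control \emph{weighted degrees}, and again the $S_0$/$\Xi_0$ weights being $1/2$ (the coarsest) means these reshufflings cost only a bounded multiplicative factor in each $\deg_\xi$. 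This yields $\|n\|_N\preccurlyeq\|n\|_G$.

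\medskip
\textbf{Step 2: volume of $G$-balls vs.\ volume of $N$-balls, both measured by $\|\cdot\|_G$.} Here I would use that $G=\bigsqcup_{j=0}^p Nx_j$ is a finite disjoint union. Clearly $\#\{g\in N:\|g\|_G\le R\}\le\#\{g\in G:\|g\|_G\le R\}$. For the other direction, for each coset representative $x_j$ one has $\|nx_j\|_G\asymp\|n\|_G$ up to an additive/multiplicative bounded error (again because $x_j$ is a bounded word in $S_0$ and $S_0$ carries the coarsest weight), so $\#\{g\in Nx_j:\|g\|_G\le R\}\preccurlyeq\#\{n\in N:\|n\|_G\le cR\}$ for a fixed constant $c$; summing over the $p+1$ cosets gives $\#\{g\in G:\|g\|_G\le R\}\preccurlyeq\#\{n\in N:\|n\|_G\le cR\}$. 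Combined with Step 1, which already shows $\#\{n\in N:\|n\|_G\le R\}\asymp\#\{n\in N:\|n\|_N\le R\}$ after adjusting constants, all three quantities are $\asymp$.

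\medskip
\textbf{Main obstacle.} The genuinely delicate part is Step 1, specifically the reverse inequality $\|n\|_N\preccurlyeq\|n\|_G$: one must reorganize an arbitrary $\Sigma_G$-word representing an element of $N$ into a $\Sigma_N$-word without blowing up the \emph{weighted} degree of any single generator by more than a constant factor. The subtlety is that an $S_0$-letter can occur an $R$-power number of times (its weight threshold is $R^{1/w_G}=R^2$, i.e.\ it may legitimately appear $\asymp R^2$ times), so the bookkeeping that converts $S_i$-letters into conjugated $\Xi_i$-letters must be done in a way that does not route those conversions through the potentially numerous $S_0$-letters in a degree-amplifying manner. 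The correct way to organize this — grouping the word by coset segments and using that only finitely many conjugators $x_j$ occur — is exactly the content of \cite[Theorem 3.2.1]{CKSCWZ} and the weight-system machinery of \cite{SCZ-nil}, and I would cite those for the technical core rather than reproduce the combinatorics.
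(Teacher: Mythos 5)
The paper offers no proof of this lemma; it is quoted verbatim from \cite[Theorem 3.2.1]{CKSCWZ}, so your ultimate deferral to that reference is the same approach the paper takes. However, the sketch you offer alongside the citation contains a genuine error in the weight bookkeeping, and it misidentifies which direction of the pointwise comparison is hard.

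You assert that $w_G$ on $S_0$, namely $1/2$, ``is the largest weight appearing,'' and therefore that the $S_0$-letters generated when rewriting a $\Sigma_N$-word over $\Sigma_G$ are harmless. This is backwards. Since each $\alpha_i \in (0,2)$, we have $1/\alpha_i > 1/2$, so $1/2$ is the \emph{smallest} weight. The admissible number of occurrences of a letter $\xi$ in a word of norm $\le R$ is $R^{w(\xi)}$, not $R^{1/w(\xi)}$: one needs $(\deg_\xi)^{1/w(\xi)} \le R$. So $S_0$-letters may appear only up to $R^{1/2}$ times (not $R^2$ as you wrote), while $S_i$-letters may appear up to $R^{1/\alpha_i} \gg R^{1/2}$ times. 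Thus $S_0$ is the \emph{most} constrained alphabet, not the most permissive. Consequently, your ``essentially formal'' direction $\|n\|_G \preccurlyeq \|n\|_N$ runs into exactly the same difficulty you correctly flag for the reverse: expanding each $\Xi_i$-letter $x_j s x_j^{-1}$ into $S_0$-letters plus an $S_i$-letter produces on the order of $R^{1/\alpha_i}$ occurrences of $S_0$-letters, which blows the $R^{1/2}$ budget; some reorganization or cancellation (exploiting normality of $N$ and grouping by coset, so that adjacent $x_j^{-1}x_j$ pairs cancel, together with the nilpotent commutator calculus of \cite{SCZ-nil}) is needed in \emph{both} directions. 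Your Step~2 (passing from $N$ to $G$ using the finite index and boundedness of the $x_j$'s) is fine, and your instinct to cite \cite{CKSCWZ} for the combinatorial core is exactly right, but the ``easy/hard'' split you draw in Step~1, and the specific weight arithmetic justifying it, do not hold up.
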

\begin{rem} The functions $\|\cdot\|_N,\|\cdot\|_G$ are quasi-norms (the triangle inequality holds with a multiplicative constant $C$ which may be different from $C=1$ and depends on the weights). The notation $\|\cdot\|_N,\|\cdot\|_G$, are abuses of notation in so far as the weight system used on $N$ and $G$ to define these quantities actually depend greatly on the measure $\mu$ since the sets $\Sigma_i$ and the positive coefficients $\alpha_i\in (0,2)$ that come from the hypothesis on each component $\nu_i$ of $\mu$
play a key role in the definition of $\|\cdot\|_N,\|\cdot\|_G$. 
\end{rem}

To obtain 
a sharp volume estimate for $\{g \in N: \|g\|_N\le R\}$, apply \cite[Theorem 3.2]{SCZ-nil}. For the convenience of the reader, we describe this key result in the present setting.  
Let $\mathcal{C}(\Sigma_N)$ be the collection of all formal commutators over $\Sigma_N \cup \Sigma_N^{-1}$. 
The weight system $w_N$ can be extended to $\mathcal{C}(\Sigma_N)$ by setting $w_N(\xi^{-1}) =w_N(\xi)$ and $w_N([\xi, \xi']) = w_N(\xi) + w_N(\xi')$. 
For $t \ge 0 $, define
$$N_t := \langle \pi(c) \;|\; c  \in \mathcal{C}(\Sigma_N), w_N(c)  \ge t\rangle \subseteq N $$
where $\pi$ is the evaluation of $c$ in $N$.  Observe that there is a greatest $t$ such that $N_t = N$, call it $w_1$. By induction on the quotients and nilpotency of $N$, there exists a finite sequence of numbers 
$w_1 < w_2 < \ldots < w_j <w_{j+1}$
such that 
$$N_{w_i} \subsetneq N_{w_{i-1}},\quad  N_t = N_{w_i} \text{ for } t \in [w_i, w_{i+1}), \quad N_t = \{e\} \text{ for }t > w_j $$
By construction, $[N, N_{w_i}] \subseteq N_{w_{i+1}}$. Let $A_i$ be the abelian group 
$$A_i := N_{w_i}/N_{w_{i+1}}$$ with torsion free rank $r_i$. Define
\begin{align}
    \label{eq:gamma}
    \gamma_\mu = \sum_{i=1}^j w_i r_i
\end{align}

\begin{theo}[{\cite[Th. 3.2 and Rm. 3.3]{SCZ-nil}}]
\label{thm:vol}
    Referring to the setting and notations above, for any $R >1$, 
    $\# \{g \in N\;:\; \|g\|_N \le R\} \asymp R^{\gamma_\mu}$. 
\end{theo}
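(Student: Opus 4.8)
The plan is to prove $\#\{g \in N : \|g\|_N \le R\} \asymp R^{\gamma_\mu}$ by a weighted version of the Bass--Guivarc'h volume computation, in which the weight filtration $(N_t)_{t>0}$ and its graded layers $A_i = N_{w_i}/N_{w_{i+1}}$ play the role that the lower central series plays in the classical argument. Since only multiplicative constants are at stake one may pass to a torsion‑free finite‑index subgroup; equivalently, all statements below are made modulo the finite torsion of each $A_i$. The first step is to fix a Mal'cev‑type coordinate system adapted to the filtration: for each $i$ choose basic commutators $c_{i,1},\dots,c_{i,r_i} \in \mathcal C(\Sigma_N)$ with $w_N(c_{i,\ell}) \ge w_i$ whose images contribute to the free rank $r_i$ of $A_i$, and order the family $(c_{i,\ell})$ by increasing $i$. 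Standard nilpotent‑group theory then gives a bijective parametrisation $\mathbb Z^{\sum_i r_i} \to N'$, $(n_{i,\ell}) \mapsto \prod_{i,\ell}\pi(c_{i,\ell})^{n_{i,\ell}}$, of a finite‑index subgroup $N' \le N$, and every element of $N$ is within bounded word‑distance of such a product.

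For the lower bound, consider all products $g = \prod_{i,\ell}\pi(c_{i,\ell})^{n_{i,\ell}}$ with $0 \le n_{i,\ell} \le \delta R^{w_i}$ for a small fixed $\delta>0$. By the coordinate property these are pairwise distinct, and there are $\asymp \prod_i (R^{w_i})^{r_i} = R^{\gamma_\mu}$ of them. Each such $g$ satisfies $\|g\|_N \preccurlyeq R$: by the power‑of‑a‑commutator estimate — the weighted analogue of $|[s_1,s_2]^n|_S \preccurlyeq \sqrt n$, $|[s_1,[s_1,s_2]]^n|_S \preccurlyeq n^{1/3}$, obtained from the same nested‑commutator construction, see \cite[Prop.~2.17]{SCZlow} — the factor $\pi(c_{i,\ell})^{n_{i,\ell}}$ can be written as a word over $\Sigma_N$ in which every letter $\xi$ occurs $\preccurlyeq n_{i,\ell}^{\,w_N(\xi)/w_N(c_{i,\ell})} \preccurlyeq R^{w_i\, w_N(\xi)/w_N(c_{i,\ell})} \le R^{w_N(\xi)}$ times (using $w_N(c_{i,\ell})\ge w_i$). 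Concatenating the finitely many such words yields a word for $g$ in which each $\xi$ occurs $\preccurlyeq R^{w_N(\xi)}$ times, whence $\|g\|_N \preccurlyeq R$ by the definition of the quasi‑norm. Hence $\#\{g\in N:\|g\|_N\le CR\}\succcurlyeq R^{\gamma_\mu}$.

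For the upper bound, take $g$ with $\|g\|_N \le R$ and a word $\theta$ over $\Sigma_N$ representing $g$ with $\deg_\xi(\theta) \le R^{w_N(\xi)}$ for every $\xi$. Running Hall's collection process on $\theta$ (and reducing modulo the relations defining $N$) rewrites it in the normal form $g \equiv \prod_{i,\ell}\pi(c_{i,\ell})^{n_{i,\ell}}$. The combinatorial heart is the bound on the resulting exponents: a basic commutator $c$ using each letter $\xi$ with multiplicity $m_\xi(c)$ acquires an exponent of size $\preccurlyeq \prod_\xi \deg_\xi(\theta)^{m_\xi(c)} \le \prod_\xi R^{w_N(\xi)m_\xi(c)} = R^{\sum_\xi w_N(\xi)m_\xi(c)} = R^{w_N(c)}$, the last equality being exactly the additivity $w_N([c_1,c_2]) = w_N(c_1)+w_N(c_2)$. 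Using the structure of the filtration — the relations $[N_s,N_t]\subseteq N_{s+t}$ and $[N,N_{w_i}]\subseteq N_{w_{i+1}}$, and the fact that $N_t$ is constant between consecutive jumps — one checks that the basic commutators contributing to the free rank of $A_i$ have weight $w_i$, so $|n_{i,\ell}| \preccurlyeq R^{w_i}$. Therefore the number of admissible normal forms, hence $\#\{g\in N:\|g\|_N\le R\}$, is $\preccurlyeq \prod_i (R^{w_i})^{r_i} = R^{\gamma_\mu}$, which together with the lower bound gives the claim.

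The main obstacle is the upper bound's combinatorial core: obtaining the exponent estimate $\preccurlyeq R^{w_N(c)}$ uniformly over all admissible words $\theta$, and then reconciling it with the invariant $\gamma_\mu = \sum_i w_i r_i$ — that is, verifying that the weight system has been set up so that each graded layer $A_i$ is reached with exponents of size exactly $R^{w_i}$ and no larger. Both points, along with the construction of the filtration $(N_t)$ itself, are carried out in \cite[\S 3]{SCZ-nil}; the statement here is precisely \cite[Th.~3.2 and Rm.~3.3]{SCZ-nil} once one identifies $\gamma_\mu$ with the invariant $D(\Sigma_N,\mathfrak w_N)$ of \cite[Def.~1.7]{SCZ-nil}.
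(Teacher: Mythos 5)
The paper states this result as a citation to \cite[Th.~3.2 and Rm.~3.3]{SCZ-nil} and gives no proof of its own, so there is no in-paper argument against which to compare. Your sketch is a faithful reconstruction of the weighted Bass--Guivarc'h argument of the cited reference: the Mal'cev parametrisation adapted to the weight filtration, the power-of-commutator estimate giving $\succcurlyeq R^{\gamma_\mu}$ distinct elements of norm $\preccurlyeq R$, and Hall's collection with the exponent bound $|n_{i,\ell}| \preccurlyeq \prod_\xi \deg_\xi(\theta)^{m_\xi(c_{i,\ell})} \preccurlyeq R^{w_N(c_{i,\ell})}$ are precisely the ingredients used there.

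One step deserves more than a ``one checks'': your setup allows $w_N(c_{i,\ell}) \ge w_i$, but your upper bound only closes to $R^{\gamma_\mu}$ with $\gamma_\mu = \sum_i w_i r_i$ if the chosen commutators have weight \emph{exactly} $w_i$, since the exponent control you derive is $|n_{i,\ell}| \preccurlyeq R^{w_N(c_{i,\ell})}$. This is true but load-bearing. The reason it holds is that $N_t$ is constant between consecutive jump values, so a commutator whose weight lies strictly inside the gap between $w_i$ and $w_{i+1}$ already evaluates into the next filtration subgroup $N_{w_{i+1}}$ and hence projects to zero in $A_i$; only commutators of weight $w_i$ can contribute to the free rank $r_i$, and a Mal'cev-adapted choice of representatives can therefore be made at weight exactly $w_i$ in every layer. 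That reconciliation between the jump values of the filtration and the graded ranks is exactly the piece of \cite[\S 3]{SCZ-nil} that your proof needs and correctly defers to.
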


\subsection{Proof of Theorem \ref{thm: gamma}}

The upper-bound $\mu^{(2n)}(e)\preccurlyeq n^{-\gamma_\mu}$ follows from the volume estimate of Theorem \ref{thm:vol}, Lemma \ref{lem:norm_comp}, and the pseudo-Poincar\'e inequality (\ref{PPpoly}) below. 
\begin{theo} 
\label{thm: PPpoly}
Take $\mu = \sum_{i=1}^k \nu_i$ where $\nu_i$ satisfies $(\mathcal{PP}_{\Sigma_i, \alpha_i})$ and $\bigcup_{i=1}^k \Sigma_i$ generates $G$. 
Let $\|\cdot\|_G$ be defined as in 
Definition \ref{Gnorm}. Then there exists a constant $C = C(G, \mu)$ such that for any finitely supported function $f$ and $h \in G$,  
\begin{equation}
\label{PPpoly}
  \sum_{g\in G}|f(gh)-f(g)|^2\le C \|h\|_G \mathcal E_\mu(f,f)
\end{equation}
\end{theo}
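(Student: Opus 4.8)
The plan is to deduce the inequality on $G$ from its counterpart on the normal nilpotent subgroup $N$, where the weight-system machinery of \cite{SCZ-nil} applies, and then to carry the estimate back to $G$ through the finite set of coset representatives $X=\{x_0=e,x_1,\dots,x_p\}$. Two elementary facts are used throughout. First, since $\operatorname{supp}(\mu)$ is symmetric and generates $G$, every fixed $y\in G$ is a product of elements of $\operatorname{supp}(\mu)$, so telescoping and Cauchy--Schwarz produce a constant $C_y$ with $\sum_{g\in G}|f(gy)-f(g)|^2\le C_y\,\mathcal E_\mu(f,f)$; in particular $\sum_g|f(g\xi^n)-f(g)|^2\le n^2\sum_g|f(g\xi)-f(g)|^2\le C_\xi\,n^2\,\mathcal E_\mu(f,f)$ for any fixed $\xi$. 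Second, $\mathcal E_{\nu_i}\le\mathcal E_\mu$ for each $i$, since $\mathcal E_\mu=\sum_i\mathcal E_{\nu_i}$ is a sum of nonnegative forms.

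\emph{Step 1: single-generator inequalities on $N$.} I would first check that for every letter $\xi$ of the alphabet $\Sigma_N$ of Definition \ref{Nnorm} and every $n\ge 1$,
\begin{equation*}
\sum_{g\in G}|f(g\xi^n)-f(g)|^2\le C\,n^{1/w_N(\xi)}\,\mathcal E_\mu(f,f).
\end{equation*}
If $\xi\in\Xi_0$ (so $1/w_N(\xi)=2$) this is the first fact above. If $\xi\in\Xi_i$ with $i\ge1$, write $\xi=x_j s x_j^{-1}$ with $s\in S_i$, so $\xi^n=x_j\,s^n\,x_j^{-1}$ is a product of three elements; Lemma \ref{lem: decomp} bounds $\sum_g|f(g\xi^n)-f(g)|^2$ by $3$ times $\sum_g|f(gx_j)-f(g)|^2+\sum_g|f(gs^n)-f(g)|^2+\sum_g|f(gx_j^{-1})-f(g)|^2$. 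The outer two terms are $\le C\,\mathcal E_\mu(f,f)$ by the first fact; for the middle term, $s\in S_i\subseteq N_i\subseteq\langle\Sigma_i\rangle$ so $|s|_{\Sigma_i}<\infty$, whence $(\mathcal{PP}_{\Sigma_i,\alpha_i})$ together with $\mathcal E_{\nu_i}\le\mathcal E_\mu$ gives $\sum_g|f(gs^n)-f(g)|^2\le C\,(n\,|s|_{\Sigma_i})^{\alpha_i}\mathcal E_\mu(f,f)$. Since $1+n^{\alpha_i}\le 2n^{\alpha_i}$ for $n\ge 1$ and $1/w_N(\xi)=\alpha_i$, the displayed estimate follows (with constants uniform over the finitely many letters).

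\emph{Step 2: from $N$ to $G$.} The group $N$ is nilpotent, $\Sigma_N$ generates it, and $w_N$ has been extended to the formal commutators of $\Sigma_N$ by $w_N([\xi,\xi'])=w_N(\xi)+w_N(\xi')$; Step 1 supplies exactly the single-generator hypotheses needed to invoke \cite[Theorem 2.10]{SCZ-nil} for the nilpotent group $N$ with alphabet $\Sigma_N$ and weight system $w_N$ (should that result be formulated to require the richer $(\mathcal{PP}_{\Xi_i,\alpha_i})$-type inputs rather than single-generator ones, these follow by first applying the same theorem inside each nilpotent subgroup $\langle\Xi_i\rangle$ with the uniform weight $1/\alpha_i$, using $\inf_\theta\max_\xi(\deg_\xi\theta)^{\alpha_i}\le|h|_{\Xi_i}^{\alpha_i}$). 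This yields $\sum_{g\in G}|f(gh)-f(g)|^2\le C\,\|h\|_N\,\mathcal E_\mu(f,f)$ for all $h\in N$, and Lemma \ref{lem:norm_comp} replaces $\|h\|_N$ by $\|h\|_G$ up to a constant for $h\in N$. Finally, for $h\in G\setminus\{e\}$ write $h=n_h x_j$ with $n_h\in N$, $x_j\in X$; then
\begin{equation*}
\sum_g|f(gh)-f(g)|^2\le 2\sum_g|f(gx_j)-f(g)|^2+2\sum_g|f(gn_h)-f(g)|^2\le C\,\mathcal E_\mu(f,f)+C\,\|n_h\|_G\,\mathcal E_\mu(f,f).
\end{equation*}
As $\|h\|_G\ge 1$, it remains to see $\|n_h\|_G\preccurlyeq\|h\|_G$: take a word $\theta$ over $\Sigma_G$ realizing $\|h\|_G$ up to a factor $2$ and append a fixed word over $\Sigma_G$ for $x_j^{-1}$ (possible since $S_0\subseteq\Sigma_G$ generates $G$); each letter count $\deg_\xi\theta$ then grows by at most a constant $C_0$, and $(a+b)^{1/w_G(\xi)}\preccurlyeq a^{1/w_G(\xi)}+b^{1/w_G(\xi)}$, so taking the maximum over $\xi\in\Sigma_G$ gives $\|n_h\|_G\preccurlyeq\|h\|_G+C_0'\preccurlyeq\|h\|_G$. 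Combining Steps 1 and 2 proves (\ref{PPpoly}).

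\emph{Where the difficulty lies.} The one genuinely hard ingredient --- turning control of a single generator's powers into control of an arbitrary word by absorbing commutator corrections at the prescribed weights --- is not reproved here but imported from \cite[Theorem 2.10]{SCZ-nil}, and its hypotheses require nilpotency; this is precisely why one descends to $N$ at the outset. What remains to be carried out carefully is the bookkeeping around the conjugated alphabet: checking that $\mathcal E_\mu$ controls the powers of each generator $\xi=x_j s x_j^{-1}\in\Xi_i$ with the correct exponent $\alpha_i$ --- which forces the argument through $|s^n|_{\Sigma_i}\le n|s|_{\Sigma_i}$ rather than through the norm directly, and relies on $\operatorname{supp}(\mu)$ generating $G$ to absorb the fixed factors $x_j^{\pm1}$ --- together with the comparison of the quasi-norms $\|\cdot\|_N$ and $\|\cdot\|_G$ on $N$ (Lemma \ref{lem:norm_comp}) and their behavior under multiplication by the finitely many coset representatives. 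Extra care is warranted because $\|\cdot\|_G,\|\cdot\|_N$ are only quasi-norms.
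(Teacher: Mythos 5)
Your proof is correct, but it takes a genuinely different route from the paper's. The paper invokes \cite[Corollary~3.23]{CKSWZ1} to write $h\in G$ directly as a product $\kappa_0\prod_i \xi_i^{x_i}\kappa_i$ with $\xi_i\in\Sigma_G\setminus S_0$, $|x_i|\le \|h\|_G^{w_G(\xi_i)}$, and short correction words $\kappa_i$ over $S_0$ with $|\kappa_i|_{S_0}^2\preccurlyeq\|h\|_G$; it then bounds each factor separately --- the $\xi_i^{x_i}$ via $(\mathcal{PP}_{\Sigma_j,\alpha_j})$, and the $\kappa_i$ by telescoping over $S_0$ after expressing each $s\in S_0$ as a word in $\bigcup_i\Sigma_i$ --- and combines with Lemma~\ref{lem: decomp}. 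That corollary packages, in one stroke, both the descent to the nilpotent normal subgroup $N$ and the normal-form machinery of \cite{SCZ-nil}. You instead make that descent explicit: you establish single-generator estimates over the conjugated alphabet $\Sigma_N$ (using the conjugation $\xi^n=x_j s^n x_j^{-1}$ and the normality of $N$), apply \cite[Theorem~2.10]{SCZ-nil} to $N$ to get the inequality for all $h\in N$ in terms of $\|h\|_N$, pass to $\|\cdot\|_G$ via Lemma~\ref{lem:norm_comp}, and finally absorb the coset representative $x_j$ and the quasi-norm perturbation $\|hx_j^{-1}\|_G\preccurlyeq\|h\|_G$ by hand. Both arguments lean on essentially the same black box; the paper's is shorter because Corollary~3.23 does the bookkeeping, while yours is more modular and makes the roles of the finite index step, the conjugated generators, and nilpotency of $N$ visible. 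Two small points you handle correctly but are worth keeping explicit: the single-generator inputs to \cite[Theorem~2.10]{SCZ-nil} involve sums over $g\in G$, not $g\in N$, which is fine because the commutator manipulations happen inside $N$ (normality ensures $\Xi_i\subseteq N$) while $f$ rides along; and the quasi-norm inequality $\|n_h\|_G\preccurlyeq\|h\|_G$ requires $\|h\|_G\ge 1$ to absorb the additive constant, which holds for $h\ne e$ by definition.
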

\begin{proof}

By \cite[Corollary 3.23]{CKSWZ1}, for $h \in G$ with $\|h\|_G = R$, we can write 
$$h = \kappa_0 \prod_{i=1}^q \xi_i^{x_i}\kappa_j$$
where $\xi_i \in \Sigma_G\backslash S_0$, $x_i \in \mathbb{Z}$ with $|x_i| \le R^{w_G(\xi_i)}$ for $1\le i \le q$ and $|\kappa_i|_{S_0}^2\preccurlyeq   R$ for $0\le i \le q$.

Consider the factor $\xi_1^{x_1}$. As $\xi_1 \in \Sigma_G\backslash S_0$, we can find
$j \in \{1,\ldots, k\}$ such that  $\xi_1 \in S_j$ and 
$ w_G(\xi_1) = \frac{1}{\alpha_j}$. Note that $\xi_1 \in S_j \subseteq  \langle \Sigma_j\rangle$ and by the hypothesis 
$(\mathcal{PP}_{\Sigma_j, \alpha_j})$, 
$$\sum_{g \in G}|f(g\xi_1^{x_1}) - f(g)|^2 \le |\xi_1^{x_1}|_{\Sigma_j}^{\alpha_j} \mathcal{E}_{\nu_j}(f,f) \preccurlyeq |x_1|^{\alpha_j} \mathcal{E}_{\nu_j}(f,f) \le R  \mathcal{E}_{\mu}(f,f).$$
In the last step, we also use the fact that $\mathcal{E}_{\nu_j}(f,f)  \le  \mathcal{E}_{\mu}(f,f)$. Such an inequality holds for every factor $\xi_i^{x_i}$ for $i=1,\ldots, q$.

By hypothesis, $\bigcup_{i=1}^k\Sigma_i$ generates $G$ and in particular $S_0$, so each $s\in S_0$ can be written as a product $h_{1}^{(s)} \ldots h_{i_s}^{(s)}$ of elements in $\bigcup_{i=1}^k \Sigma_i$.  Let $\eta_s$ be the minimum of $\mu(h_{i}^{(s)})$ over $i=1,\ldots, i_s$. By Lemma \ref{lem: decomp}, 
$$\sum_{g\in G}|f(gs) - f(g)|^2 \le 
\frac{i_s^2 \mathcal{E}_\mu(f,f)}{ \eta_s}.$$
Now consider the factor $\kappa_i$ where $i=0,1,\ldots, q$. It can be  decomposed over $S_0$ as  
$\kappa_i = s_{1}\ldots s_{n}$ where $n \preccurlyeq   R^{1/2}$. Set $\eta$ be the minimum of $\eta_s$ over all $s \in S_0$ and deduce
\begin{align*}
   \sum_{g\in G}|f(g \kappa_i) - f(g)|^2 \preccurlyeq  n \sum_{m=1}^n \sum_{g\in G} & |f(g s_{m}) - f(g)|^2 \preccurlyeq  \frac{n^2\mathcal{E}_{\mu}(f,f) }{\eta}\preccurlyeq   R\mathcal{E}_{\mu}(f,f)
\end{align*}
The desired result again follows from Lemma \ref{lem: decomp}.

\end{proof}

For the lower bound $\mu^{(2n)}(e) \succcurlyeq n^{-\gamma_\mu}$, the main 
argument is based on the notion of spectral profile. Given a measure $\mu$ on a group $G$, the spectral profile $\Lambda_{2,\mu}$ is the function defined over $[1,\infty)$ by 
$$\Lambda_{2,\mu}(v) = \min\left\{\frac{\mathcal{E}_{\mu}(f,f)}{\|f\|_2^2} \;:\; 1 \le \#\mbox{supp}(f) \le v \right\}$$
Here, 
$\|f\|_2^2 =\sum_{x\in G} |f(x)|^2$.
It's well-known that for any $\gamma > 0$, 
\begin{align}
   & \forall v \ge 1, \Lambda_{2,\mu}(v) \preceq v^{-1/\gamma} \Longleftrightarrow \forall n = 1,2,\ldots, \nu^{(2n)} \succeq n^{-\gamma} \label{eq:lowerbound}\\
   & \forall v \ge 1, \Lambda_{2,\mu}(v)  \succeq v^{-1/\gamma}\Longleftrightarrow \forall n = 1,2,\ldots, \nu^{(2n)} \preceq  n^{-\gamma}
\end{align}
See \cite{SCZlow} and the references given therein.

\begin{theo}
\label{thm:zeta}
Suppose $\mu = \sum_{i=1}^k \nu_i$ where each $\nu_i$ satisfies ($\mathcal{U}_{\Sigma_i, \alpha_i}$) for some  finite set $\Sigma_i$ and $\alpha_i \in (0,2)$. Referring to the setting and notations in Definition \ref{Gnorm}, take $w^* = \max \{w_G(\sigma):\sigma \in \Sigma_G\}$. For every component $\nu_i$ of $\mu$, the function
    $g \mapsto \zeta_R(g) = (R - \|g\|_G^{w^*})_+$ satisfies
    $$\frac{\mathcal{E}_{\nu_i}(\zeta_R, \zeta_R)}{\|\zeta_R\|_2^2} \preccurlyeq   R^{-\frac{1}{w^*}} $$
    Furthermore, for any $n\ge 1$,  $\mu^{(2n)}(e)\succcurlyeq  n^{-\gamma}$. 
\end{theo}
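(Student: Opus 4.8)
\emph{Proof strategy.} The statement combines a Rayleigh-quotient estimate for the test function $\zeta_R$ with the return bound $\mu^{(2n)}(e)\succcurlyeq n^{-\gamma}$, the latter being a formal consequence of the former. For the reduction, set $\rho=R^{1/w^*}$, so $\zeta_R$ is supported on the quasi-ball $B_\rho=\{g:\|g\|_G<\rho\}$ and $\zeta_R\ge R/2$ on $\{g:\|g\|_G^{w^*}\le R/2\}$. By Theorem \ref{thm:vol} and Lemma \ref{lem:norm_comp} both sets have cardinality $\asymp\rho^{\gamma_\mu}=R^{\gamma_\mu/w^*}$, hence $\|\zeta_R\|_2^2\asymp R^{2+\gamma_\mu/w^*}$ and $\#\mbox{supp}(\zeta_R)\asymp R^{\gamma_\mu/w^*}$. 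Since $\mathcal E_\mu=\sum_{i=1}^k\mathcal E_{\nu_i}$, the per-component bound gives $\mathcal E_\mu(\zeta_R,\zeta_R)/\|\zeta_R\|_2^2\preccurlyeq R^{-1/w^*}$; putting $v=R^{\gamma_\mu/w^*}$ and using that $\Lambda_{2,\mu}$ is nonincreasing to fill in intermediate $v$, this reads $\Lambda_{2,\mu}(v)\preccurlyeq v^{-1/\gamma_\mu}$ for all $v\ge1$, and $(\ref{eq:lowerbound})$ yields $\mu^{(2n)}(e)\succcurlyeq n^{-\gamma_\mu}$ with $\gamma_\mu$ the exponent of $(\ref{eq:gamma})$ --- the asserted $\gamma$.

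So the whole point is the bound $\mathcal E_{\nu_i}(\zeta_R,\zeta_R)\preccurlyeq R^{2-1/w^*+\gamma_\mu/w^*}$ for a fixed component. I would estimate $\mathcal E_{\nu_i}(\zeta_R,\zeta_R)=\tfrac12\sum_h\nu_i(h)\sum_g|\zeta_R(gh)-\zeta_R(g)|^2$ using three ingredients. (a) Height and support: $|\zeta_R(gh)-\zeta_R(g)|\le R$, and the inner sum in $g$ runs over $B_\rho\cup B_\rho h^{-1}$, which (being $B_\rho$ together with a right translate of itself) has cardinality $\preccurlyeq\rho^{\gamma_\mu}$. (b) A one-letter gradient estimate: since $u\mapsto(R-u^{w^*})_+$ is $1$-Lipschitz and appending one letter $\sigma\in\Sigma_G$ of weight $v=w_G(\sigma)\le w^*$ changes $\|\cdot\|_G$ by a controlled amount, $|\zeta_R(g\sigma)-\zeta_R(g)|\preccurlyeq R^{1-v/w^*}$ uniformly in $g$. (c) An economical spelling of a jump $h\in\langle\Sigma_i\rangle$ with $m:=|h|_{\Sigma_i}$: write $h=nc$ with $n\in N_i=N\cap\langle\Sigma_i\rangle=\langle S_i\rangle$ and $c$ a coset representative of bounded $S_0$-length; since $N_i$ has finite index in $\langle\Sigma_i\rangle$ it is undistorted, so $|n|_{S_i}\asymp m$ and $h$ is spelled over $\Sigma_G$ with $\asymp m$ letters of weight $1/\alpha_i$ and $O(1)$ letters of weight $1/2$. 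Telescoping (b) along this spelling gives
$$|\zeta_R(gh)-\zeta_R(g)|\preccurlyeq\min\bigl\{\,mR^{\,1-1/(\alpha_iw^*)}+R^{\,1-1/(2w^*)},\ R\,\bigr\}.$$

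Granting (a)--(c), the rest is bookkeeping, split at $m=M:=R^{1/(\alpha_iw^*)}$ (note $\alpha_iw^*\ge1$ since $w^*\ge1/\alpha_i$, so the minimum equals $R$ exactly for $m\ge M$). For short jumps, squaring the first term produces a second-moment term $m^2R^{2-2/(\alpha_iw^*)}$, a cross term $mR^{2-1/(\alpha_iw^*)-1/(2w^*)}$ and a constant term $R^{2-1/w^*}$; multiplying by $\rho^{\gamma_\mu}$ and summing against $\nu_i$, the truncated-second-moment and tail halves of $(\mathcal{U}_{\Sigma_i,\alpha_i})$ give $\sum_{m\le M}m^2\nu_i(\{|h|_{\Sigma_i}=m\})\preccurlyeq M^{2-\alpha_i}$ and $\sum_{m\le M}m\,\nu_i(\{|h|_{\Sigma_i}=m\})\preccurlyeq M^{(1-\alpha_i)^+}$, and a short exponent count (using $0<\alpha_i<2$) shows each term is $\preccurlyeq R^{2-1/w^*}\rho^{\gamma_\mu}$. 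For long jumps one uses $|\zeta_R(gh)-\zeta_R(g)|\le R$ and $R^2\rho^{\gamma_\mu}\sum_{m>M}\nu_i(\{|h|_{\Sigma_i}=m\})\preccurlyeq R^2\rho^{\gamma_\mu}M^{-\alpha_i}=R^{2-1/w^*}\rho^{\gamma_\mu}$. Adding, $\mathcal E_{\nu_i}(\zeta_R,\zeta_R)\preccurlyeq R^{2-1/w^*+\gamma_\mu/w^*}\asymp R^{-1/w^*}\|\zeta_R\|_2^2$. (A useful check: on $G=\mathbb Z$ with $\nu_\alpha(z)\asymp|z|^{-1-\alpha}$ one has $w^*=1/\alpha$, $\|z\|_G=|z|^\alpha$, and $\zeta_R=(R-|z|)_+$ is a width-$R$ tent with $|\zeta_R(z+h)-\zeta_R(z)|\le\min\{|h|,R\}$; the split reproduces $\mathcal E_{\nu_\alpha}(\zeta_R,\zeta_R)\asymp R^{3-\alpha}=R^{-\alpha}\|\zeta_R\|_2^2$.)

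The main obstacle is ingredient (b)--(c): because $\|\cdot\|_G$ obeys only a quasi-triangle inequality, one cannot naively differentiate $(R-u^{w^*})_+$ along an arbitrary path, and must instead use that the jump stays in a single $\langle\Sigma_i\rangle$, spell it economically over the weighted alphabet $\Sigma_G$ of Definition \ref{Gnorm} through the finite-index undistorted subgroup $N_i=\langle S_i\rangle$, and run the telescoping sum letter by letter with the weight arithmetic under control. This is precisely the kind of estimate developed in \cite{SCZ-nil} (notably around its Theorem 3.2) and in \cite{CKSCWZ}, which I would invoke; the volume input $\#\{g:\|g\|_G\le R\}\asymp R^{\gamma_\mu}$ used throughout is Theorem \ref{thm:vol} together with Lemma \ref{lem:norm_comp}.
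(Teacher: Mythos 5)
Your overall strategy matches the paper's: reduce the return bound to the Rayleigh-quotient estimate via the spectral profile characterization (\ref{eq:lowerbound}) together with the volume input of Theorem \ref{thm:vol} and Lemma \ref{lem:norm_comp}; compute $\|\zeta_R\|_2^2\asymp R^2 W(R)$ and $\#\mathrm{supp}(\zeta_R)\asymp W(R)$; split the Dirichlet sum at $|h|_{\Sigma_i}=R^{1/(\alpha_i w^*)}$; handle long jumps with $|\zeta_R|\le R$ and the tail half of ($\mathcal{U}_{\Sigma_i,\alpha_i}$); handle short jumps by bounding $|\zeta_R(gh)-\zeta_R(g)|\le|\|gh\|_G^{w^*}-\|g\|_G^{w^*}|$ and applying the truncated-second-moment half; and your exponent bookkeeping is correct. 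The paper uses exactly this split and arrives at the same two dominant terms you do.

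Where you diverge is in how the short-jump increment $\|gh\|_G^{w^*}-\|g\|_G^{w^*}$ is estimated. You propose a one-letter gradient bound followed by telescoping along an economical spelling of $h$, and you rightly flag that the quasi-triangle inequality makes a naive letter-by-letter telescope delicate — but you leave that obstacle unresolved, deferring to the references. The paper avoids the telescope entirely: writing $h=nx$ with $n\in N_i$, $x$ a bounded coset representative, it takes optimal words $\theta_g,\theta_n,\theta_x$, observes that $\theta_g\theta_n\theta_x$ spells $gh$, and then bounds the difference of $w^*$-th powers by a max over $\sigma\in\Sigma_G$ of $\bigl(\deg_\sigma(\theta_g)+\deg_\sigma(\theta_n)+\deg_\sigma(\theta_x)\bigr)^{w^*/w_G(\sigma)}-\deg_\sigma(\theta_g)^{w^*/w_G(\sigma)}$, followed by the elementary inequality $(a+b)^p-a^p\le(a+b)^{p-1}b$. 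This only compares degree counts of the two endpoint words, so no control of intermediate group elements is needed. Two small inaccuracies in your writeup: the assertion that ``$u\mapsto(R-u^{w^*})_+$ is $1$-Lipschitz'' is not what you use (it is $u\mapsto(R-u)_+$ applied to $u=\|g\|_G^{w^*}$ that is $1$-Lipschitz), and the one-letter bound $|\zeta_R(g\sigma)-\zeta_R(g)|\preccurlyeq R^{1-w_G(\sigma)/w^*}$ is not uniform in $g$ — it holds only when $\|g\|_G\preccurlyeq R^{1/w^*}$, which is in fact the point of restricting to $(g,h)\in\Omega$ and is exactly what makes the telescope nontrivial to run through intermediate points. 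The paper's word-concatenation argument is what you should substitute for your ingredient (b)--(c) to close the gap.
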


\begin{proof}

Let $W(R) =\# \{g: \|g\|_G\le R^\frac{1}{w^*}\}$ be the volume of the support of $\zeta_R$. 
 By Lemma \ref{lem:norm_comp} and Theorem \ref{thm:vol}, $W(R)\asymp R^\frac{\gamma}{w^*}$.  
If 
$\frac{\mathcal{E}_{\nu_i}(\zeta_R, \zeta_R)}{\|\zeta_R\|_2^2} \preccurlyeq R^{-\frac{1}{w^*}}$ for any component $\nu_i$ of $\mu$, 
$$\Lambda_{2,\mu}(R^\frac{\gamma}{w^*})
\le \sum_{i=1}^k \frac{\mathcal{E}_{\nu_i}(\zeta_R, \zeta_R)}{\|\zeta_R\|_2^2}  
\preceq R^{-\frac{1}{w^*}} = (R^\frac{\gamma}{w^*})^{-\frac{1}{\gamma}} $$
and the lower bound follows from \ref{eq:lowerbound}. 

It remains to show the first statement. Consider the component $\nu_i$ of $\mu$ where $i = 1,\ldots, k$. Since $\zeta_R$ is at least $R/2$ over $\{g: \|g\|_G \le R^\frac{1}{w^*}/2\}$, $\|\zeta_R\|_2^2 \asymp R^2W(R)$. Let $$\Omega  = \{(g,h) \in G\times \langle \Sigma_i\rangle: \zeta_R(gh) + \zeta_R(g) > 0\}$$
By hypothesis $supp(\nu_i) \subseteq \langle \Sigma_i \rangle$ and hence
$$\mathcal{E}_{\nu_i}(\zeta_R,\zeta_R) = \sum_{(g,h) \in \Omega} |f(gh) - f(g)| \nu_i(h)$$
For every fixed $h$, 
    $\# \{g: (g,h)\in \Omega\} \le 2W(R)$, so Condition ($\mathcal{U}_{\Sigma_i, \alpha_i}$) gives 
    \begin{align*}
        \sum_{\substack{ (g,h) \in \Omega\\
        |h|_{\Sigma_i}\ge R^\frac{1}{\alpha_i w^*}}} |\zeta_R(gh) - \zeta_R(g)|^2 \nu_i(h) \le 2R^2W(R) \sum_{|h|_{\Sigma_i} \ge R^\frac{1}{\alpha_i w^*}} \nu_i(h)  \preccurlyeq   R^{2- \frac{1}{w^*}}W(R)
    \end{align*}
We now consider the above sum over $(g,h) \in \Omega$ and $|h|_{\Sigma_i} \le R^\frac{1}{\alpha_i w^*}$. For $(g,h) \in \Omega$, 
$$|\zeta_R(gh) - \zeta_R(g)| \le |\|gh\|_G^{w^*} - \|g\|_G^{w^*} |$$
    Indeed, if $\zeta_R(gh) > 0$ and $  \zeta_R(g) >  0$, the two expressions above are equal;
if only $\zeta_R(gh)$ is positive, $\|g\| \ge R^\frac{1}{w^*}$ and 
$|\zeta_R(gh) - \zeta_R(g)|  = R- \|gh\|_G^{w^*} \le |\|gh\|_G^{w^*} - \|g\|_G^{w^*} | $; the same argument applies if only $\zeta_R(g) $ is positive. 

Let $N_i = \langle S_i \rangle = N \cap \langle \Sigma_i\rangle$ be defined as in  Definition \ref{Gnorm}; it's a normal subgroup of $\langle \Sigma_i\rangle$ with a finite set of right representatives, say $X_i$. Write $h \in \langle \Sigma_i\rangle$ as $h = nx $ where $n \in N_i$ and $x \in X_i$ and  It follows $$|n|_{S_i} \asymp |n|_{\Sigma_i} \asymp |h|_{\Sigma_i}  \le  R^\frac{1}{\alpha_i w^*}. $$ 
Take a word $\theta_n \in \cup_{j=0}^\infty (S_i\cup S_i^{-1})^j$ such that $\theta_n = n$ in $G$ and $ \mbox{deg}_s(\theta_n)  \preccurlyeq R^\frac{1}{\alpha_i w^*}$ for all $s \in S_i$. In particular, this means
$$\|n\|_G \le \max_{s\in S_i} \left\{\mbox{deg}_s(\theta_n)^\frac{1}{w_G(s)} \right\}   = \max_{s\in S_i} \left\{\mbox{deg}_s(\theta_n)^{\alpha_i} \right\}   \preccurlyeq  R^\frac{1}{w^*} $$

Let $M$ be the integer such that  
every element in the representative set $X_i$ can be decomposed as a word over $\Sigma_G$ of length no more than $M$. There exists a word $\theta_x$ over $\Sigma_G$ such that $\theta_x = x$ in $G$ and  
$\mbox{deg}_\sigma(\theta_x) \le M$ for any $\sigma \in \Sigma_G$, as a result of which $\|x\|_G \le M^\frac{1}{2}$. 

It follows for $(g,h) \in \Omega$ and $|h|_{\Sigma_i} \le R^\frac{1}{\alpha_i w^* }$, we must have 
$\|g\|_G \preccurlyeq   R^\frac{1}{w^*} $ and 
$ \|gh\|_G \preccurlyeq   R^\frac{1}{w^*}$. Indeed, since $(g,h) \in \Omega$, either $\|g\|_G \le  R^\frac{1}{w^*}$ or $\|gh\|_G \le  R^\frac{1}{w^*}$; if $\|g\|_G \le R^\frac{1}{w^*}$, 
$$\|gh\|_G\preccurlyeq\|g\|_G + \|n\|_G + \|x\|_G \preccurlyeq  R^\frac{1}{w^*} $$
and if  $\|gh\|_G \le R^\frac{1}{w^*}$,
one can prove $\|g\|_G \le R^\frac{1}{w^*}$ using a similar argument. Take
$\theta_g\in \cup_{j=0}^\infty (\Sigma_G \cup \Sigma_G^{-1})^j$ with $$\|g\|_G = \max_{\sigma \in \Sigma_G} 
    \left\{\left(\mbox{deg}_\sigma(\theta_g) \right)^\frac{1}{w_G(\sigma)} \right\}$$
    and observe
$\mbox{deg}_\sigma(\theta_g)  \preccurlyeq  R^\frac{w_G(\sigma)}{w^*}$ for all $ \sigma \in \Sigma_G$. Combining all the inequalities above, we deduce for 
$(g,h) \in \Omega$ and $|h|_{\Sigma} \le R^{\frac{1}{\alpha_i w^*}}$, if $\|gh\|_G \ge \|g\|_G$. 
\begin{align*}
    & \|gh\|_G^{w^*} - \|g\|_G^{w^*} \\  
    \le &  \max_{\sigma \in\Sigma_G}\left\{ \left(\mbox{deg}_\sigma(\theta_g) +  \mbox{deg}_\sigma(\theta_n) + \mbox{deg}_\sigma(\theta_x)  \right)^\frac{w^*}{w_G(\sigma)}  - \left(\mbox{deg}_\sigma(\theta_g)\right)^\frac{w^*}{w_G(\sigma)}
    \right\} \\  
    \le & \max_{\sigma \in\Sigma_G} \left\{ \left(\mbox{deg}_\sigma(\theta_g) +   \mbox{deg}_\sigma(\theta_n) + \mbox{deg}_\sigma(\theta_x)  \right)^{\frac{w^*}{w_G(\sigma)} -1}  (    \mbox{deg}_\sigma(\theta_n) + \mbox{deg}_\sigma(\theta_x)  )
    \right\} \\
         =&  C_M
\max_{
\substack{s\in S_i\\ \sigma \in \Sigma_G \backslash S_i}}\left\{ \left(\mbox{deg}_\sigma(\theta_g) \right)^{\frac{w^*}{w_G(\sigma)} -1}, \left(\mbox{deg}_s(\theta_g) + 
    \mbox{deg}_s(\theta_n) \right)^{\frac{w^*}{w_G(s)} -1}      \mbox{deg}_s(\theta_n) 
\right\}\\
         \preccurlyeq &
\max_{
\substack{s\in S_i\\ \sigma \in \Sigma_G \backslash S_i}}\left\{ 
R^{1 - \frac{w_G(\sigma)}{w^*}},  R^{1 - \frac{w_G(s)}{w^*}} |n|_{\Sigma_i}
\right\} 
 \preccurlyeq   \max \left\{ 
R^{1 - \frac{1}{2w^*}},  R^{1 - \frac{1}{\alpha_i w^*}} |n|_{\Sigma_i} 
\right\} := I
\end{align*}
In the last inequality, we use the fact that 
$ \frac{1/2}{w^*}\le \frac{w_G(\sigma)}{w^*} \le 1$ for any $\sigma\in \Sigma_G \backslash S_i$.  
If $\|gh\|_G < \|g\|_G$, we can run the same argument with $g' = gh$ and $h' = h^{-1}$ and obtain the same bound up to a constant.  If $ I  = R^{1 - \frac{1}{2w^*}} $, by 
\begin{align*}
      \sum_{
     \substack{ (g,h) \in \Omega\\
     |h|_{\Sigma_i}\le R^\frac{1}{\alpha_i w^*} } } |\zeta_R(gh) - \zeta_R(g)|^2 \nu_i(h) & \le  2W(R) R^{2 - \frac{1}{w^*}}\sum_{|h|_{\Sigma_i} 
      \le R^\frac{1}{\alpha_i w^*}} \nu_i(h)\\ & \le  2W(R) R^{2 - \frac{1}{w^*}}
\end{align*}
If $ I  =  R^{1 - \frac{1}{\alpha_i w^*}} |n|_{\Sigma_i} $, by the hypothesis ($\mathcal{U}_{\Sigma_i, \alpha_i}$) 
\begin{align*}
     \sum_{
     \substack{ (g,h) \in \Omega\\
     |h|_{\Sigma_i}\le R^\frac{1}{\alpha_i w^*} } } |\zeta_R(gh) - \zeta_R(g)|^2 \nu_i(h)
    & \le   2W(R)R^{2 - \frac{2}{\alpha_i w^*}}\sum_{|h|_{\Sigma_i} \le R^\frac{1}{\alpha_i w^*}}|n|_{\Sigma_i}^2 \nu_i(h) \\
    & \preccurlyeq  2W(R)R^{2 - \frac{2}{\alpha_i w^*}}\sum_{|h|_{\Sigma_i} \le R^\frac{1}{\alpha_i w^*}}|h|_{\Sigma_i}^2 \nu_i(h) \\
     & \preccurlyeq  W(R)R^{2 - \frac{2}{\alpha_i w^*}} R^\frac{2-\alpha_i}{\alpha_i w^*} = W(R)R^{2- \frac{1}{w^*}}
\end{align*}
completing the proof of the first statement.

\end{proof}

    Having proved the pseudo-Poincar\'e inequality (\ref{PPpoly}) and the two-sided bound, $\mu^{(2n)}(e)\asymp n^{-\gamma_\mu}$ of Theorem \ref{thm: gamma} with $\gamma_\mu$ related to the volume of $\{g: \|g\|_G\le R\}$ by Theorem \ref{thm:vol} and Lemma \ref{lem:norm_comp}, we 
can follow the argument in \cite[Theorem 5.5]{CKSWZ1} and obtain the following analog of Theorem \ref{thm:1b}. 

\begin{theo}  Let $G$ be a finitely generated group with $V(r) \asymp r^d$, $\mu$ be a symmetric probability measure satisfying the hypotheses (1) and (2) of Theorem \ref{thm: gamma}, and 
$\gamma$ be as in Theorem \ref{thm: gamma}. 
Denote by $\|\cdot\|_G$ the induced quasi-norm defined as in Definition \ref{Gnorm}.  
 \begin{itemize}
     \item There exists a constant $C$ such that, for all integers $n,m$ and group elements $x,y\in G$,
     \begin{equation*}|\mu^{(n+m)}(xy)-\mu^{(n)}(x)|\le C \left(\frac{m}{n}+ \sqrt{\frac{\|y\|_G}{n}}\right)\mu^{(2\lceil n/2\rceil)}(e).\end{equation*}
     \item There exists $\eta$ and $n_0$ such that for all $n\ge n_0$ and $x\in G$ such that $\|g\|_G\le \eta n$, it holds that
    \begin{equation*}
        \mu^{(n)}(g)\asymp n^{-\gamma}
    \end{equation*}
    \item Let $(X_n)_{n=0}^\infty$ be random walk on $G$ driven by $\mu$. 
    For all $\epsilon>0$ there exists $\beta>0$ such that
    for all $n$,
    \begin{equation*}
        \mathbf P_e\left(\sup_{k\le n}\{\|X_k\|_G\}\ge \beta n\right)
\le \epsilon.    \end{equation*}

  \item
There exist $\epsilon, \beta_1 \in (0,\infty)$ and $\beta_2 \ge 1$ such that for all $n, \tau$ with $\frac{1}{2} \tau /\beta_1 \le n \le \tau/\beta_1$, 
$$\inf_{x: \|x\|_G \le \tau } P_x\left( \sup_{0\le k \le n} \{\|X_k\|_G\} \le \gamma_2 \tau, \|X_n\|_G \le \tau \right)\ge \epsilon.$$
 \end{itemize}
 \end{theo}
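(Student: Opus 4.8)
The plan is to deduce all four bullets of the final theorem from the diagonal estimate $\mu^{(2n)}(e)\asymp n^{-\gamma}$ (available from Theorems~\ref{thm: gamma}, \ref{thm:vol} and Lemma~\ref{lem:norm_comp}), together with the pseudo-Poincar\'e inequality (\ref{PPpoly}) and the weak-$\alpha$-moment-type bounds that are packaged in the hypotheses $(\mathcal{U}_{\Sigma_i,\alpha_i})$. The key observation is that, up to the substitution $|h|^\alpha \rightsquigarrow \|h\|_G$ (i.e. the exponent $1/\alpha$ in the single-strength case becomes the single exponent $1$ when distance is measured by the quasi-norm $\|\cdot\|_G$), the present setting is formally identical to the single-strength situation of Theorem~\ref{thm:1b}. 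So the strategy is to follow the corresponding arguments in \cite[Theorem 5.5]{CKSWZ1} essentially verbatim, checking that each input is available here in the required form.

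\smallskip

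The first bullet (space-time regularity) is obtained by the standard iteration/telescoping argument: one writes $\mu^{(n+m)}(xy)-\mu^{(n)}(x)$ as a sum of two differences, one controlled by $|\mu^{(n+m)}-\mu^{(n)}|$ in time (which costs a factor $m/n$ against the on-diagonal value, using $\mu^{(2n)}(e)\asymp n^{-\gamma}$ and that $n\mapsto \mu^{(2n)}(e)$ varies regularly), and one controlled in space by the pseudo-Poincar\'e inequality (\ref{PPpoly}): applying (\ref{PPpoly}) to $f=\mu^{(\lceil n/2\rceil)}(x\,\cdot\,)$ gives a term of size $\sqrt{\|y\|_G/n}\cdot \mu^{(2\lceil n/2\rceil)}(e)$ after using $\mathcal E_\mu(f,f)\le (1/n)\|f\|_2^2\asymp (1/n)\mu^{(2n)}(e)$ together with the symmetry and semigroup properties. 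The second bullet (near-diagonal lower bound matching the on-diagonal upper bound $\mu^{(n)}(g)\preccurlyeq n^{-\gamma}$, which follows from $\mu^{(2n)}(e)\asymp n^{-\gamma}$ and monotonicity) is an immediate corollary: in the first bullet take $m=0$, $x=e$, $y=g$, so that $|\mu^{(n)}(g)-\mu^{(n)}(e)|\le C\sqrt{\|g\|_G/n}\,\mu^{(n)}(e)$, which is $\le \tfrac12\mu^{(n)}(e)$ once $\|g\|_G\le \eta n$ with $\eta$ small, forcing $\mu^{(n)}(g)\asymp n^{-\gamma}$. (One replaces ``$n\ge 1$'' by ``$n\ge n_0$'' if $\mu(e)=0$, as in the earlier remark.)

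\smallskip

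The last two bullets (control and strong control of the exit time from $\|\cdot\|_G$-balls) follow from the near-diagonal estimate by the same device as in \cite[Proposition 1.4]{SCZlow} and \cite[Section 5]{CKSWZ1}. For the upper bound on $\mathbf P_e(\sup_{k\le n}\|X_k\|_G\ge \beta n)$ one combines a maximal-inequality/optional-stopping argument with the diagonal bound: writing $\tau_\beta$ for the first exit time of the ball of $\|\cdot\|_G$-radius $\beta n$, at time $2n$ the walk is at $\|\cdot\|_G$-distance at most $C n$ from the exit point (using the weak-$\alpha$-moment control $(\mathcal{U}_{\Sigma_i,\alpha_i})$ to bound the overshoot across the ``boundary layer'' of $\|\cdot\|_G$-width $\asymp n$), and the total mass of $\mu^{(2n)}$ outside the ball of $\|\cdot\|_G$-radius $\asymp n$ is small because $\sum_{\|g\|_G\ge c n}\mu^{(2n)}(g)$ is controlled by the volume $\asymp R^\gamma$ and the near-diagonal estimate; optimizing gives the claim. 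For the strong-control lower bound one uses the matching near-diagonal \emph{lower} bound $\mu^{(n)}(g)\succcurlyeq n^{-\gamma}$ on $\{\|g\|_G\le \eta n\}$ to produce a uniform-in-starting-point lower bound on the probability that the walk stays inside a $\|\cdot\|_G$-ball of radius $\asymp \tau$ up to time $n\asymp \tau$ and ends up inside the ball of radius $\tau$ — a standard chaining/Markov-property argument from near-diagonal estimates. (Note the typo-level point that the displayed inequality should read $\gamma_2\tau$ with $\gamma_2=\beta_2$.)

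\smallskip

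\textbf{Main obstacle.} None of the individual steps is deep, but the delicate point is the role of the weak-moment hypothesis in controlling the \emph{overshoot} (the jump that carries the walk across the boundary of a $\|\cdot\|_G$-ball): because jumps can be long, one must quantify, via $(\mathcal{U}_{\Sigma_i,\alpha_i})$ for each component $\nu_i$ separately and then sum, that with high probability the walk does not leap far beyond radius $\beta n$ in a single step — and crucially this must be done with the quasi-norm $\|\cdot\|_G$ rather than the word metric, so one needs the quasi-triangle inequality for $\|\cdot\|_G$ with its multiplicative constant, and the comparison $\|h\|_G \asymp \|h\|_{\Sigma_i}$-type estimates used inside the proof of Theorem~\ref{thm:zeta}, to translate the $|h|_{\Sigma_i}$-tail bound of $(\mathcal{U}_{\Sigma_i,\alpha_i})$ into a $\|h\|_G$-tail bound. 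Once this translation is in place, the exit-time estimates run exactly as in \cite{SCZlow,CKSWZ1}.
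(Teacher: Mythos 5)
Your proposal is correct and takes essentially the same route as the paper: the paper itself does not spell out the four bullets but simply notes that, once the pseudo-Poincar\'e inequality (\ref{PPpoly}) and the two-sided diagonal estimate $\mu^{(2n)}(e)\asymp n^{-\gamma}$ are in hand (via Theorem \ref{thm: gamma}, Theorem \ref{thm:vol} and Lemma \ref{lem:norm_comp}), one can follow the argument of \cite[Theorem 5.5]{CKSWZ1}, which is exactly the plan you describe. Your additional remarks on how the weak-moment hypotheses $(\mathcal{U}_{\Sigma_i,\alpha_i})$ control the overshoot in the $\|\cdot\|_G$-quasi-metric are the real content of the deferred \cite{CKSWZ1}-argument and are consistent with it.
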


\subsection{Coordinate-wise stable-like measures revisited}

\label{sec: polygrowth}
Let $G$ be a finitely generated group with $V(r) \asymp r^d$. 
Fix a $k$-tuple $S = (s_1,\ldots, s_k)$ of elements in $G$ such that $\langle S \rangle = G$ and a map $\psi: \mathbb{Z}^k \to \mathbb{R}$ given by $\psi(\bar{a}) = (1+ \|a\|)^{-k-\alpha}$. In this section, we'll revisit the coordinate-wise $\alpha$-stable-like measure $\nu_{\psi, S}$ given by \ref{nupsi}: 
\begin{align*}
 \nu_{\psi, S}(h)=  
    \begin{cases}
     \frac{1}{2}\sum\limits_{ \bar{a}: 
      \pi(\bar{a})\in \{h,h^{-1}\} } \psi(\bar{a}) &\text{if }\{h,h^{-1}\}\cap \pi(\mathbb Z^k)\neq \emptyset \\
         0 &\text{otherwise }\\
\end{cases}
\end{align*}
where 
$$\pi:\mathbb Z^k\to G, \bar{a}=(a_1,\dots,a_k)\mapsto \pi(\bar{a})=s_1^{a_1}\dots s_k^{a_k}$$
and derive results analogous 
to those in Theorem \ref{thm:2}. As discussed in Remark \ref{rem: polypp}, since the support of $\nu_{\psi, S}$ is no longer restricted to a nilpotent group now, the techniques employed in the proof of Theorem \ref{thm:2} are no longer applicable. In fact, in this case, the entire argument relies on the following theorem, the proof of which is postponed until the end of the section.
\begin{theo}
\label{thm: diri-comp}
Define a measure $\mu_{S,\alpha}$ by 
\begin{align}
    \label{eq: mu_S_alpha}
    \mu_{S,\alpha}(g)\asymp \sum_{i=1}^k \sum_{a \in \mathbb{Z}} \frac{\textbf{1}_{s_i^a} (g) }{(1+|a|)^{1+\alpha}}
\end{align}
    The Dirichlet forms of  $\mu_{S,\alpha}$ and $ \nu_{\psi, S}$ are comparable. 
\end{theo}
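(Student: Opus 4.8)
The plan is to prove the two one‑sided estimates $\mathcal{E}_{\nu_{\psi,S}}\preccurlyeq \mathcal{E}_{\mu_{S,\alpha}}$ and $\mathcal{E}_{\mu_{S,\alpha}}\preccurlyeq \mathcal{E}_{\nu_{\psi,S}}$ separately. Fix a finitely supported $f$ and abbreviate $E(h)=\sum_{g\in G}|f(gh)-f(g)|^2$, so that $E(e)=0$, $E(h)=E(h^{-1})$ by Lemma~\ref{lem: decomp}, and $E(h_1\cdots h_m)\le m\sum_{i=1}^m E(h_i)$. Unwinding the definitions, $\mathcal{E}_{\mu_{S,\alpha}}(f,f)\asymp \sum_{i=1}^k\sum_{a\in\mathbb{Z}}(1+|a|)^{-1-\alpha}E(s_i^a)$, and, using the symmetrization built into (\ref{nupsi}) together with $E(h)=E(h^{-1})$, one obtains the two‑sided bound $\mathcal{E}_{\nu_{\psi,S}}(f,f)\asymp \sum_{\bar a\in\mathbb{Z}^k}\psi(\bar a)\,E(\pi_S(\bar a))$. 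Write $e_i$ for the $i$-th standard basis vector of $\mathbb{Z}^k$ and record the marginal estimate $\sum_{\bar a:\,a_i=a}\psi(\bar a)\asymp \sum_{\bar a:\,a_i=a}(1+\|\bar a\|)^{-k-\alpha}\asymp (1+|a|)^{-1-\alpha}$, obtained by integrating out the $k-1$ free coordinates; this will be used repeatedly.

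The inequality $\mathcal{E}_{\nu_{\psi,S}}\preccurlyeq \mathcal{E}_{\mu_{S,\alpha}}$ is immediate: telescoping $\pi_S(\bar a)=s_1^{a_1}\cdots s_k^{a_k}$ with Lemma~\ref{lem: decomp} gives $E(\pi_S(\bar a))\le k\sum_{i=1}^k E(s_i^{a_i})$, hence $\mathcal{E}_{\nu_{\psi,S}}(f,f)\asymp \sum_{\bar a}\psi(\bar a)E(\pi_S(\bar a))\le k\sum_{i=1}^k\sum_{a\in\mathbb Z}E(s_i^a)\sum_{\bar a:\,a_i=a}\psi(\bar a)\asymp \sum_{i=1}^k\sum_{a}(1+|a|)^{-1-\alpha}E(s_i^a)\asymp \mathcal{E}_{\mu_{S,\alpha}}(f,f)$.

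The substantial direction is $\mathcal{E}_{\mu_{S,\alpha}}\preccurlyeq \mathcal{E}_{\nu_{\psi,S}}$, for which it suffices to show $\sum_{a}(1+|a|)^{-1-\alpha}E(s_i^a)\preccurlyeq \mathcal{E}_{\nu_{\psi,S}}(f,f)$ for each $i$, and this I would prove by induction on $i$. The algebraic device is the identity
$$s_i^a = \pi_S(\bar d)^{-1}\cdot \pi_S(\bar b+ae_i)\cdot \pi_S(\bar b)^{-1}\cdot \pi_S(\bar d),$$
valid for every $\bar b=(b_1,\dots,b_k)\in\mathbb{Z}^k$, where $\bar d=(b_1,\dots,b_{i-1},0,\dots,0)$ (for $i=1$ it degenerates to $s_1^a=\pi_S(\bar b+ae_1)\pi_S(\bar b)^{-1}$); it yields $E(s_i^a)\le C\big(E(\pi_S(\bar d))+E(\pi_S(\bar b+ae_i))+E(\pi_S(\bar b))\big)$, and a further application of Lemma~\ref{lem: decomp} gives $E(\pi_S(\bar d))\le (i-1)\sum_{l<i}E(s_l^{b_l})$. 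The key choice is the ``fattened corner'' $M_a=\{\bar b\in\mathbb{Z}^k:\ 2|a|\le |b_l|\le 4|a|\ \text{for all }l\}$, which satisfies $\sum_{\bar b\in M_a}\psi(\bar b)\asymp (1+|a|)^{-\alpha}$ for $a\ne 0$ (and the $a=0$ term of the target sum vanishes since $E(e)=0$). Averaging the previous bound against $\psi(\bar b)/\sum_{M_a}\psi$ over $\bar b\in M_a$, multiplying by $(1+|a|)^{-1-\alpha}$, and summing over $a$ produces, using $(1+|a|)^{-1-\alpha}/\sum_{M_a}\psi\asymp (1+|a|)^{-1}$, three sums of the form $\sum_a(1+|a|)^{-1}\sum_{\bar b\in M_a}\psi(\bar b)(\cdots)$, in each of which I would interchange the order of summation. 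Since every $\bar b\in M_a$, and every $\bar b+ae_i$ with $\bar b\in M_a$, has all of its $k$ coordinates of size comparable to $|a|$, for a fixed such tuple $\bar c$ the set $\{a:\,\bar c\in M_a\}$ is an interval of $a$'s whose endpoints lie in a bounded ratio, so $\sum_{a:\,\bar c\in M_a}(1+|a|)^{-1}\le C$; the two terms carrying $E(\pi_S(\bar b))$ and $E(\pi_S(\bar b+ae_i))$ then collapse — after the substitution $\bar c=\bar b+ae_i$, under which $\psi$ changes only by a bounded factor — to $\preccurlyeq \sum_{\bar c}\psi(\bar c)E(\pi_S(\bar c))\asymp \mathcal{E}_{\nu_{\psi,S}}(f,f)$. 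For the term carrying $\sum_{l<i}E(s_l^{b_l})$, one first sums out the $k-1$ coordinates other than $b_l$ (a factor $\asymp |a|^{k-1}(1+|a|)^{-k-\alpha}=(1+|a|)^{-1-\alpha}$), then swaps to get $\preccurlyeq \sum_{l<i}\sum_{b}(1+|b|)^{-1-\alpha}E(s_l^b)$, which is $\preccurlyeq \mathcal{E}_{\nu_{\psi,S}}(f,f)$ by the induction hypothesis; this closes the induction.

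The main obstacle is precisely this last direction, and within it the necessity of averaging against the \emph{restricted} weight set $M_a$: averaging over all $\bar b$ with $|b_l|\ge 2|a|$ — as in the proof of Theorem~\ref{thm:2} — would produce an extra logarithmic factor upon interchanging summations, which the target estimate has no room to absorb. Equally essential is to keep the whole weighted sum $\sum_a(1+|a|)^{-1-\alpha}(\cdots)$ intact rather than first extracting the pointwise pseudo-Poincar\'e bound $E(s_i^a)\preccurlyeq |a|^{\alpha}\mathcal{E}_{\nu_{\psi,S}}(f,f)$, which is true but diverges when summed against $(1+|a|)^{-1-\alpha}$. Once Theorem~\ref{thm: diri-comp} is in hand, matching upper and lower bounds for $\nu_{\psi,S}^{(2n)}(e)$, as well as the analogues of Theorem~\ref{thm:1b}, follow by comparison from the corresponding statements for $\mu_{S,\alpha}$, which is of the multi-strength type covered by Theorem~\ref{thm: gamma}.
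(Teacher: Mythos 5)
Your proof is correct, and the broad strategy — prove the easy direction by telescoping, then prove the hard direction $\mathcal{E}_{\mu_{S,\alpha}}\preccurlyeq\mathcal{E}_{\nu_{\psi,S}}$ by induction on the coordinate index using a four-factor conjugation of $s_i^a$ and averaging against $\psi$ — is the same as the paper's. The interesting difference is in how the averaging is parametrized. Where you write $s_i^a=\pi_S(\bar d)^{-1}\cdot\pi_S(\bar b+ae_i)\cdot\pi_S(\bar b)^{-1}\cdot\pi_S(\bar d)$ with $\bar b$ ranging over the full $k$-dimensional fattened annulus $M_a=\{2|a|\le|b_l|\le 4|a|\}$, the paper writes $s_p^a = \pi(\bar v)^{-1}\cdot\pi(\bar v + 2ae_p+\bar w)\cdot\pi(\bar v+ae_p+\bar w)^{-1}\cdot\pi(\bar v)$ with $\bar v\in\mathbb{Z}^{p-1}\times\{0\}^{k-p+1}$, $\bar w\in\{0\}^p\times\mathbb{Z}^{k-p}$, and \emph{no} size restriction; this is your decomposition with the constraint $b_i=a$ built in. The effect is that the paper has exactly $k$ free variables ($a$ together with the $k-1$ components of $(\bar v,\bar w)$), so that the reparametrization $(a,\bar v,\bar w)\mapsto\bar v+ae_p+\bar w$ is a bijection onto $\mathbb{Z}^k$ and the weight $(1+|a|+|\bar v|+|\bar w|)^{-k-\alpha}$ is literally $\psi$ of the image — there is no change-of-variables Jacobian or overcounting to control, and Lemma~\ref{lem: sum} does all the bookkeeping. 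Your version carries one redundant degree of freedom, which you pay for with the harmonic weight $(1+|a|)^{-1}$ coming from $(1+|a|)^{-1-\alpha}/\sum_{M_a}\psi$; the annular restriction $M_a$ is then exactly what makes $\sum_{a:\bar c\in M_a}(1+|a|)^{-1}$ bounded (the endpoints of the interval of admissible $a$ are in a fixed ratio). Both arguments close, and your diagnostic remarks are correct and to the point: replacing $M_a$ by the full outer corner $\{|b_l|\ge 2|a|\}$ would indeed produce a $\log$ upon interchanging sums, and passing first through the pointwise bound $E(s_i^a)\preccurlyeq|a|^\alpha\mathcal{E}_{\nu_{\psi,S}}(f,f)$ loses the estimate entirely since $\sum_a(1+|a|)^{-1-\alpha}|a|^\alpha$ diverges. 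The paper's subspace parametrization is cleaner in that it sidesteps the overcounting issue altogether, but your fattened-annulus version is equally valid.
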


The measure $\mu_{S,\alpha}$ is straightforward to handle as it satisfies the hypothesis in Theorem \ref{thm: gamma}
trivially with $\Sigma_i = \langle s_i\rangle$ and $\alpha_i = \alpha$ for $i=1,\ldots, k$. 
The following corollary is a easy consequence of Theorem \ref{thm: gamma}, Lemma \ref{lem:norm_comp}, 
Theorem \ref{thm:vol}, Theorem \ref{thm: PPpoly}, and 
Theorem \ref{thm:zeta}.
\begin{cor}
    \label{cor: poly_mu_S_alpha}
    Let $\|\cdot\|_G$ be the quasi-norm on $G$ induced by $\mu_{S,\alpha}$ by choosing $\Sigma_i$, $i=1,\ldots, k$, in Definition \ref{Gnorm} to be $\langle s_i\rangle$. There exists $$\gamma := \gamma(G, \langle s_1\rangle, \ldots \langle s_k\rangle, \alpha, \ldots,\alpha)$$ given as in Theorem \ref{thm: gamma} such that 
$\# \{g\in G: \|g\|_G \le R\} \asymp R^\gamma$ and 
           $$\sum_{g\in G}|f(gh) - f(g)|^2 \le C\|h\|_G \,\mathcal{E}_{\mu_{S,\alpha}}(f,f) $$
For every $v \ge 1$,  $\Lambda_{2,\mu_{S,\alpha}}(v) \preccurlyeq v^{-1/\gamma}$. Furthermore, $\mu_{S,\alpha}^{(2n)}(e) \asymp n^{-\gamma}$. 
\end{cor}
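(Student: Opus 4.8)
\emph{Plan.} The idea is to recognize $\mu_{S,\alpha}$ as an instance of the measures handled by Theorem~\ref{thm: gamma}, with data $\Sigma_i=\{s_i\}$ and $\alpha_i=\alpha$ for $1\le i\le k$ (so that $\langle\Sigma_i\rangle=\langle s_i\rangle$, as intended in the statement), and then to read off, for this particular measure, each of the four outputs assembled earlier in the section. After the normalization hidden in the ``$\asymp$'' of~\eqref{eq: mu_S_alpha}, write $\mu_{S,\alpha}=\sum_{i=1}^{k}\nu_i$, where $\nu_i$ is a symmetric sub-probability measure supported on the cyclic subgroup $\langle s_i\rangle$ with $\nu_i(s_i^a)\asymp(1+|a|)^{-1-\alpha}$ for $a\in\mathbb Z$. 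The standing assumption $\langle S\rangle=G$ is exactly the requirement that $\bigcup_{i=1}^k\Sigma_i=\{s_1,\dots,s_k\}$ generates $G$, so everything reduces to checking, for each $i$, that $\nu_i$ satisfies both ($\mathcal U_{\Sigma_i,\alpha_i}$) and ($\mathcal{PP}_{\Sigma_i,\alpha_i}$).

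\emph{Verifying the two hypotheses on $\nu_i$.} Condition ($\mathcal U_{\Sigma_i,\alpha_i}$) is elementary. If $s_i$ has finite order, $\nu_i$ is finitely supported, $|\cdot|_{\Sigma_i}$ is bounded on its support, and ($\mathcal U$) holds trivially (using $\alpha<2$ to absorb the leftover powers of $t\ge 1$). If $s_i$ has infinite order, then $|s_i^a|_{\Sigma_i}=|a|$ and one needs only the scalar estimates $\sum_{|a|\ge t}(1+|a|)^{-1-\alpha}\asymp t^{-\alpha}$ and $t^{-2}\sum_{|a|\le t}|a|^2(1+|a|)^{-1-\alpha}\asymp t^{-\alpha}$, the latter again using $\alpha<2$. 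For ($\mathcal{PP}_{\Sigma_i,\alpha_i}$) I would invoke the one-dimensional $\alpha$-stable pseudo-Poincar\'e inequality $\sum_{g\in G}|f(gs_i^a)-f(g)|^2\le C|a|^\alpha\,\mathcal E_{\nu_i}(f,f)$; since $\mathcal E_{\nu_i}$ involves only the shifts $g\mapsto gs_i^b$, this is precisely the classical pseudo-Poincar\'e inequality for $\alpha$-stable random walk on $\mathbb Z$ ($\alpha\in(0,2)$) applied on each left coset of $\langle s_i\rangle$ and then summed, i.e.\ the $k=1$ instance of inequality~\eqref{PPa} already proved inside the proof of Theorem~\ref{thm:2}.

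\emph{Assembling the conclusions.} With both hypotheses of Theorem~\ref{thm: gamma} verified for $\mu_{S,\alpha}$, all four assertions are immediate. The volume estimate $\#\{g\in G:\|g\|_G\le R\}\asymp R^{\gamma}$, with $\gamma=\gamma(G,\langle s_1\rangle,\dots,\langle s_k\rangle,\alpha,\dots,\alpha)$ as in~\eqref{eq:gamma}, is Theorem~\ref{thm:vol} combined with Lemma~\ref{lem:norm_comp}. The pseudo-Poincar\'e inequality $\sum_g|f(gh)-f(g)|^2\le C\|h\|_G\,\mathcal E_{\mu_{S,\alpha}}(f,f)$ is Theorem~\ref{thm: PPpoly} applied to $\mu_{S,\alpha}$. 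The spectral-profile bound $\Lambda_{2,\mu_{S,\alpha}}(v)\preccurlyeq v^{-1/\gamma}$ comes from the test function $\zeta_R$ of Theorem~\ref{thm:zeta} together with the volume estimate and, through the equivalence~\eqref{eq:lowerbound}, gives $\mu_{S,\alpha}^{(2n)}(e)\succcurlyeq n^{-\gamma}$; the matching upper bound $\mu_{S,\alpha}^{(2n)}(e)\preccurlyeq n^{-\gamma}$ is the Nash-inequality consequence of the pseudo-Poincar\'e inequality and the volume estimate recorded in the proof of Theorem~\ref{thm: gamma}. Hence $\mu_{S,\alpha}^{(2n)}(e)\asymp n^{-\gamma}$.

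\emph{Where the work is.} There is essentially no obstacle here, as the authors note: the only input not already displayed in this section is the one-dimensional pseudo-Poincar\'e inequality for the elements $s_i^a$, and that is available from the proof of Theorem~\ref{thm:2}; the remainder is bookkeeping, the only care being to keep the finite-order and infinite-order generators $s_i$ separate in the verification of ($\mathcal U_{\Sigma_i,\alpha_i}$). One could alternatively derive the corollary from Theorem~\ref{thm: diri-comp} by transporting the estimates for $\nu_{\psi,S}$, but the direct route through Theorem~\ref{thm: gamma} is cleaner, since $\mu_{S,\alpha}$ visibly has the product-of-cyclic form that Theorem~\ref{thm: gamma} was designed to treat.
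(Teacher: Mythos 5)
Your approach matches the paper's exactly: both recognize that $\mu_{S,\alpha}$ satisfies the hypotheses of Theorem \ref{thm: gamma} with $\Sigma_i=\{s_i\}$ (so that $\langle\Sigma_i\rangle=\langle s_i\rangle$) and $\alpha_i=\alpha$, and then assemble the conclusions from Theorem \ref{thm: gamma}, Lemma \ref{lem:norm_comp}, Theorem \ref{thm:vol}, Theorem \ref{thm: PPpoly}, and Theorem \ref{thm:zeta}. The paper dismisses the hypothesis verification as trivial; you correctly supply the missing scalar estimates for $(\mathcal{U}_{\Sigma_i,\alpha_i})$ (using $\alpha<2$) and the $k=1$ case of inequality~(\ref{PPa}) from the proof of Theorem \ref{thm:2} for $(\mathcal{PP}_{\Sigma_i,\alpha_i})$, which is exactly the right way to fill that gap.
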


The following corollary following directly from Theorem \ref{thm: diri-comp} and Corollary  \ref{cor: poly_mu_S_alpha} shows the same statements are also true for $\mu$. 
\begin{cor}
 \label{cor: poly_mu_psi_S}
Let $\gamma$ be defined as in Corollary  \ref{cor: poly_mu_S_alpha} and $\|\cdot\|_G$ be the quasi-norm induced by $\mu_{S,\alpha}$. Then the pseudo-Poincar\'e of $\nu_{\psi, S}$ is controlled by $\|\cdot\|_G$, i.e. 
$$\sum_{g\in G}|f(gh) - f(g)|^2 \le C\|h\|_G \,\mathcal{E}_{ \nu_{\psi, S}}(f,f) $$
For every $v \ge 1$, $\Lambda_{2,\nu_{\psi, S}}(v)   \preccurlyeq v^{-1/\gamma}$. Furthermore, $ \nu_{\psi, S}^{(2n)}(e) \asymp n^{-\gamma}$. 
\end{cor}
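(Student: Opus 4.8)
The plan is to derive Corollary~\ref{cor: poly_mu_psi_S} from Corollary~\ref{cor: poly_mu_S_alpha} through Theorem~\ref{thm: diri-comp}: once the Dirichlet forms $\mathcal E_{\nu_{\psi,S}}$ and $\mathcal E_{\mu_{S,\alpha}}$ are comparable (over the same $\ell^2$ space), the spectral profiles $\Lambda_{2,\nu_{\psi,S}}$ and $\Lambda_{2,\mu_{S,\alpha}}$ are comparable, and hence the pseudo-Poincar\'e inequality controlled by $\|\cdot\|_G$, the bound $\Lambda_{2,\nu_{\psi,S}}(v)\preccurlyeq v^{-1/\gamma}$, and the two-sided estimate $\nu_{\psi,S}^{(2n)}(e)\asymp n^{-\gamma}$ all transfer from $\mu_{S,\alpha}$, the last one via the equivalences between spectral-profile decay and return-probability decay recorded in $(\ref{eq:lowerbound})$. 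So the entire content sits in Theorem~\ref{thm: diri-comp}, and that is where I would concentrate.

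It is convenient to record that $\mathcal E_{\nu_{\psi,S}}(f,f)\asymp\sum_{\bar a\in\mathbb Z^k}\psi(\bar a)\sum_{g\in G}|f(g\pi(\bar a))-f(g)|^2$, the symmetrization in $(\ref{nupsi})$ costing only a constant since $\sum_g|f(g\pi(\bar a)^{-1})-f(g)|^2=\sum_g|f(g\pi(\bar a))-f(g)|^2$. The easy inequality $\mathcal E_{\nu_{\psi,S}}\preccurlyeq\mathcal E_{\mu_{S,\alpha}}$ then follows by writing $\pi(\bar a)=s_1^{a_1}\cdots s_k^{a_k}$, using the telescoping bound $\sum_g|f(g\pi(\bar a))-f(g)|^2\le k\sum_{i=1}^k\sum_g|f(gs_i^{a_i})-f(g)|^2$ from Lemma~\ref{lem: decomp}, and then integrating out the $k-1$ coordinates other than $a_i$: this turns $\psi(\bar a)\asymp(1+\|\bar a\|)^{-k-\alpha}$ into the weight $(1+|a_i|)^{-1-\alpha}$ that defines $\mathcal E_{\mu_{S,\alpha}}$ in $(\ref{eq: mu_S_alpha})$.

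The real content is the reverse inequality $\mathcal E_{\mu_{S,\alpha}}\preccurlyeq\mathcal E_{\nu_{\psi,S}}$, i.e.\ absorbing $\sum_{a\in\mathbb Z}(1+|a|)^{-1-\alpha}\sum_g|f(gs_i^a)-f(g)|^2$ into $\mathcal E_{\nu_{\psi,S}}$ for each $i$; I would do this by induction on $i=1,\dots,k$. The key algebraic identity, valid for every $\bar a\in\mathbb Z^k$, is $s_i^a=\pi(a_1,\dots,a_{i-1},\mathbf 0)^{-1}\,\pi(\bar a+a e_i)\,\pi(\bar a)^{-1}\,\pi(a_1,\dots,a_{i-1},\mathbf 0)$, obtained because the $s_{i+1},\dots,s_k$ blocks telescope and $\pi(\bar a+ae_i)\pi(\bar a)^{-1}$ is the conjugate of $s_i^a$ by $\pi(a_1,\dots,a_{i-1},\mathbf 0)=s_1^{a_1}\cdots s_{i-1}^{a_{i-1}}$. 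By Lemma~\ref{lem: decomp} this bounds $\sum_g|f(gs_i^a)-f(g)|^2$ by a constant times the sum of $\sum_g|f(g\pi(\bar a))-f(g)|^2$, $\sum_g|f(g\pi(\bar a+ae_i))-f(g)|^2$, and $\sum_g|f(g\pi(a_1,\dots,a_{i-1},\mathbf 0))-f(g)|^2$. Now average this over $\bar a$ in the box $W^{(i)}_a=\{\bar a:\ |a_i|\asymp|a|,\ |a_j|\le|a|\ \text{for}\ j\neq i\}$, weighted by $\psi$; since $\|\bar a\|\asymp|a|$ on $W^{(i)}_a$ one has $\sum_{\bar a\in W^{(i)}_a}\psi(\bar a)\asymp|a|^{-\alpha}$, so dividing by this mass costs only the factor $|a|^\alpha$ that turns $(1+|a|)^{-1-\alpha}$ into $(1+|a|)^{-1}$. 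After summing in $a$, the two ``full'' terms are treated by Fubini: for a fixed target $\bar c$ (namely $\bar a$, resp.\ $\bar a+ae_i$) the number of admissible $a$ is $\asymp|c_i|\asymp\|\bar c\|$, which is exactly what cancels the leftover $(1+|a|)^{-1}$ and produces $\sum_{\bar c}\psi(\bar c)\sum_g|f(g\pi(\bar c))-f(g)|^2\asymp\mathcal E_{\nu_{\psi,S}}(f,f)$. The conjugation term $\pi(a_1,\dots,a_{i-1},\mathbf 0)$ is where the induction enters: one sums out $a_i,\dots,a_k$ (replacing the weight by a summable weight $\asymp(1+\|(a_1,\dots,a_{i-1})\|)^{1-i-\alpha}$ on $\mathbb Z^{i-1}$), splits $s_1^{a_1}\cdots s_{i-1}^{a_{i-1}}$ into its $i-1$ one-generator factors by Lemma~\ref{lem: decomp}, integrates out the remaining coordinates to recover a weight $\asymp(1+|a_j|)^{-1-\alpha}$, and applies the induction hypothesis for each $j<i$. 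The base case $i=1$ is exactly the two ``full'' terms, with no conjugation term.

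The main obstacle is this reverse inequality, and within it the delicate bookkeeping of weights. A naive averaging region — for instance requiring \emph{all} coordinates of $\bar a$ to be $\gtrsim|a|$, as in the proof of $(\ref{PPa})$ — loses a logarithmic factor; taking the off-index coordinates of $W^{(i)}_a$ to be bounded \emph{above} by $\asymp|a|$ is precisely what makes the Fubini count $\#\{a\}\asymp\|\bar c\|$ come out exactly and eliminates the logarithm. Likewise, the conjugator $\pi(a_1,\dots,a_{i-1},\mathbf 0)$ present when $i>1$ need not be comparable to any single step of $\nu_{\psi,S}$, which forces the argument to be organized as an induction on $i$ rather than a one-shot estimate; verifying that the weight $\asymp(1+\|(a_1,\dots,a_{i-1})\|)^{1-i-\alpha}$ produced after summing out the high coordinates is indeed summable and feeds cleanly into the inductive hypothesis is the step I expect to require the most care.
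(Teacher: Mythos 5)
Your reduction to Theorem~\ref{thm: diri-comp} and Corollary~\ref{cor: poly_mu_S_alpha} is exactly what the paper does, and you correctly locate all of the content in the Dirichlet-form comparison $\mathcal E_{\nu_{\psi,S}}\asymp\mathcal E_{\mu_{S,\alpha}}$; the easy direction and the inductive structure of the hard direction also match the paper. The one place where you diverge technically is the averaging scheme in the hard direction, and it is worth comparing. The paper conjugates by $\pi(\bar v)$ with $\bar v\in\mathbb Z^{p-1}\times\{0\}^{k-p+1}$ and $\bar w\in\{0\}^p\times\mathbb Z^{k-p}$, fixes the $p$-th coordinate of the two middle $\pi$-factors to be $2a$ and $a$, and rewrites $(1+|a|)^{-1-\alpha}\asymp\sum_{\bar v,\bar w}(1+|a|+|\bar v|+|\bar w|)^{-k-\alpha}$ via Lemma~\ref{lem: sum}; the resulting change of variables $\bar b=\bar v+2ae_p+\bar w$ (and likewise with $a$) is a bijection, so the two ``full'' terms collapse onto $\mathcal E_\nu$ with no Fubini count and no possibility of a logarithmic loss. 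Your variant lets the $i$-th coordinate of $\bar a$ float inside the box $W^{(i)}_a$ and then divides out $\sum_{W^{(i)}_a}\psi\asymp|a|^{-\alpha}$; the subsequent Fubini count $\#\{a:\bar a\in W^{(i)}_a\}\asymp|a_i|$ does cancel the leftover $(1+|a|)^{-1}$ as you say, and your observation that upper bounds on the off-index coordinates are what kill the logarithm (as opposed to the one-sided condition $\bar a\ge 2a$ used in the proof of $(\ref{PPa})$, which is only applied there for a single fixed $a$) is correct and a genuine insight. One small point you should tighten: for the second full term $\bar c=\bar a+ae_i$ you need $\psi(\bar a)\asymp\psi(\bar c)$, i.e.\ $|a_i+a|\asymp|a_i|\asymp|a|$, which fails if $a_i$ and $a$ have opposite signs and nearly cancel; so $W^{(i)}_a$ should be defined with $a_i$ constrained to have the same sign as $a$ (e.g.\ $a>0$, $a_i\in[a,2a]$), exactly the effect the paper achieves for free by pinning the $p$-th coordinate to $2a$ and $a$. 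Your treatment of the conjugation term (sum out the high coordinates to get the weight $(1+\max_{j<i}|a_j|)^{-(i-1+\alpha)}$, split by Lemma~\ref{lem: decomp}, sum out the remaining $j<i$ coordinates to recover $(1+|a_j|)^{-1-\alpha}$, then invoke the induction hypothesis) is sound and mirrors the paper's handling of the $\sum_{i<p}s_i^{\bar v_i}$ factor.
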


With the following illustrative example, we highlights how the lack of nilpotency affects the behaviors of a coordinate-wise $\alpha$-stable-like measure, more specifically, the exponent $\gamma$ in Corollary  \ref{cor: poly_mu_S_alpha} and Corollary  \ref{cor: poly_mu_psi_S}.        
\begin{exa}
\label{exa: nil}
Consider $G = \mathbf{D}\times \mathbb{Z}$ where $\mathbf{D} = \langle u, v : u^2 = v^2\rangle$ is the infinite Dihedral group. It is not nilpotent because $[(uv)^n, u] = (uv)^{2n}$ and contains $N:= \langle uv \rangle \times \mathbb{Z}$ as the nilpotent subgroup with quotient $\langle u\rangle\simeq \mathbb{Z}_2$. Let 
$$S= \{s_1 = (u,0), s_2 = (v,0), s_3 = (0, 1)\}$$
Let $\nu_{\psi, S}$ be a coordinate-wise measure of type \ref{nupsi} where $\psi: \mathbb{Z}^3 \to \mathbb{R}, \bar{a} \mapsto (1+\|a\|)^{-3-\alpha}$. The associated $\mu_{S,\alpha}$ is
given by 
$$\mu_{S,\alpha}(g) \asymp \sum_{i=1}^3 \sum_{a\in \mathbb{Z}} \frac{\mathbf{1}_{s_i^a}(g)}{(1+|a|)^{1+\alpha}}$$
Construct $\|\cdot\|_G$ induced by $\mu_{S,\alpha}$ and $\gamma_{\mu_{S,\alpha}}$  
as in Definition \ref{Gnorm} and Equation \ref{eq:gamma}. The $\gamma$ in Corollary  \ref{cor: poly_mu_S_alpha} is  exactly $\gamma_{\mu_{S,\alpha}} = \frac{1}{2} + \frac{1}{\alpha}$. 

If $G$ were a nilpotent group, Theorem \ref{thm:2} would give $\gamma = \frac{2}{\alpha} = \frac{1}{\alpha} + \frac{1}{\alpha}$ as the volume function of $G$ is $V(r) = r^2$. In the correct estimate, one of the $\frac{1}{\alpha}$ terms is replaced with $\frac{1}{2}$ precisely because $\langle s_1\rangle$ and $\langle s_2\rangle $ intersect $N$ trivially, causing the generator $s_1s_2$ of $N$ to have a weight of $\frac{1}{2}$ instead of $\frac{1}{\alpha}$. 
\end{exa}

Building on Theorem \ref{thm: diri-comp}, the preceding argument readily generalizes to the case of finite convex combinations of coordinate-wise stable-like measures of the form $\nu_{\psi, S}$. 
Let $\bm{\alpha} = (\alpha_1,\ldots, \alpha_m) \in (0,2)^m$ and 
$\bm{S} = (S_1,\ldots, S_m)$ be a collection of finite-length tuples of elements in $G$. For $i=1,\ldots, m$, define $\psi_i: \mathbb{Z}^{|S_i|} \to \mathbb{R}$ by $\psi_i(\bar{a}) = (1+\|a\|)^{-|S_i| + \alpha_i}$. Define 
$$\nu_{\bm{S}, \bm{\alpha}} = \sum_{i=1}^m \nu_{\psi_i, \alpha_i} $$
Denote by $\Tilde{\bm{S}}$ the formal concatenation $S_1 \sqcup \ldots \sqcup S_m$ and 
$$\Tilde{\bm{\alpha}} = (\underbrace{\alpha_1, \ldots ,\alpha_1}_{|S_1|-times},  \ldots, \underbrace{\alpha_m, \ldots ,\alpha_m}_{|S_m|-times} ) $$
Set $z:= \sum_{j=1}^m |S_j|$. Define $\mu_{\Tilde{\bm{S}} , \Tilde{\bm{\alpha}} }$ by 
$$\mu_{\Tilde{\bm{S}} , \Tilde{\bm{\alpha}} }(g) = \sum_{i=1}^z \sum_{a \in \mathbb{Z}} \frac{\textbf{1}_{\Tilde{\bm{S}} _i^a} (g) }{(1+|a|)^{1+\Tilde{\bm{\alpha}}_i}}$$
where $\Tilde{\bm{S}}_i$ and $\Tilde{\bm{\alpha}}_i$ denote the $i$-th entry in $\Tilde{\bm{S}}$ and $\Tilde{\bm{\alpha}}$ respectively. By Theorem \ref{thm: diri-comp}, we have
$$\mathcal{E}_{\nu_{\bm{S}, \bm{\alpha}}} (f,f) \asymp  \mathcal{E}_{\mu_{\Tilde{\bm{S}} , \Tilde{\bm{\alpha}} }} (f,f) $$
The same argument as in Corollary \ref{cor: poly_mu_S_alpha} and Corollary \ref{cor: poly_mu_psi_S} shows 
 \begin{cor}
    Let $\|\cdot\|_G$ be the quasi-norm on $G$ induced by $\mu_{\Tilde{\bm{S}} , \Tilde{\bm{\alpha}} }$ by choosing $\Sigma_i$, $i=1,\ldots z$, in Definition \ref{Gnorm} to be the subgroup generated by $\Tilde{\bm{S}}_i$. Construct $$\gamma := \gamma(G, \langle \Tilde{\bm{S}}_1 \rangle, \ldots \langle \Tilde{\bm{S}}_z\rangle, \Tilde{\bm{\alpha}}_1, \ldots,\Tilde{\bm{\alpha}}_z)$$ given as in Theorem \ref{thm: gamma} 
    Then the pseudo-Poincar\'e of $\nu_{\bm{S}, \bm{\alpha}}$ is controlled by $\|\cdot\|_G$, i.e. 
$$\sum_{g\in G}|f(gh) - f(g)|^2 \le C\|h\|_G \,\mathcal{E}_{ \nu_{\bm{S}, \bm{\alpha}}}(f,f) $$
For every $v \ge 1$, $\Lambda_{2,\nu_{\bm{S}, \bm{\alpha}} }(v)   \preccurlyeq v^{-1/\gamma}$. Furthermore, $ \nu_{\bm{S}, \bm{\alpha}}^{(2n)}(e) \asymp n^{-\gamma}$. 
\end{cor}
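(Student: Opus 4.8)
The plan is to reduce the statement to the already-established machinery (Theorem \ref{thm: gamma}, Lemma \ref{lem:norm_comp}, Theorem \ref{thm:vol}, Theorem \ref{thm: PPpoly}, Theorem \ref{thm:zeta}) applied to the single ``model'' measure $\mu_{\Tilde{\bm S},\Tilde{\bm\alpha}}$, transferring the conclusions to $\nu_{\bm S,\bm\alpha}$ via the Dirichlet form comparison of Theorem \ref{thm: diri-comp}. First I would verify that $\mu_{\Tilde{\bm S},\Tilde{\bm\alpha}}$ satisfies both hypotheses of Theorem \ref{thm: gamma} for the choice $\Sigma_i=\langle\Tilde{\bm S}_i\rangle$ and $\alpha_i=\Tilde{\bm\alpha}_i$, $1\le i\le z$: each component $\mu_{\Tilde{\bm S},\Tilde{\bm\alpha}}$ splits as a convex combination of the one-dimensional measures $a\mapsto c(1+|a|)^{-1-\Tilde{\bm\alpha}_i}$ pushed onto $\langle\Tilde{\bm S}_i\rangle$, and for such a measure the tail and truncated second moment along $|\cdot|_{\Sigma_i}$ are of order $\Tilde{\bm\alpha}_i$ (exactly as in the $\mathbb Z$-computation already used repeatedly), so $(\mathcal U_{\Sigma_i,\Tilde{\bm\alpha}_i})$ holds; and $(\mathcal{PP}_{\Sigma_i,\Tilde{\bm\alpha}_i})$ holds because on the subgroup $\langle\Tilde{\bm S}_i\rangle\cong\mathbb Z$ (or a finite-index-related group) the pseudo-Poincar\'e inequality with exponent $\Tilde{\bm\alpha}_i$ is the classical one-dimensional stable-like pseudo-Poincar\'e inequality, which is contained in the $k=1$ case of Theorem \ref{thm:2} / the results of \cite{SCZlow}. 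Since $\Tilde{\bm S}=S_1\sqcup\dots\sqcup S_m$ generates $G$ (the hypothesis $\langle S\rangle=G$ in the $m=1$ case, generalized), Theorem \ref{thm: gamma} applies and produces the exponent $\gamma=\gamma(G,\langle\Tilde{\bm S}_1\rangle,\dots,\langle\Tilde{\bm S}_z\rangle,\Tilde{\bm\alpha}_1,\dots,\Tilde{\bm\alpha}_z)$ of Equation \ref{eq:gamma}, together with the volume estimate $\#\{g:\|g\|_G\le R\}\asymp R^\gamma$ (Lemma \ref{lem:norm_comp}, Theorem \ref{thm:vol}), the pseudo-Poincar\'e inequality $\sum_g|f(gh)-f(g)|^2\le C\|h\|_G\,\mathcal E_{\mu_{\Tilde{\bm S},\Tilde{\bm\alpha}}}(f,f)$ (Theorem \ref{thm: PPpoly}), and the two-sided bound $\mu_{\Tilde{\bm S},\Tilde{\bm\alpha}}^{(2n)}(e)\asymp n^{-\gamma}$.

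Next I would invoke Theorem \ref{thm: diri-comp}, which gives $\mathcal E_{\nu_{\bm S,\bm\alpha}}(f,f)\asymp\mathcal E_{\mu_{\Tilde{\bm S},\Tilde{\bm\alpha}}}(f,f)$ for all finitely supported $f$. The comparability of Dirichlet forms immediately transfers: (i) the pseudo-Poincar\'e inequality, since replacing $\mathcal E_{\mu_{\Tilde{\bm S},\Tilde{\bm\alpha}}}$ by $C\mathcal E_{\nu_{\bm S,\bm\alpha}}$ on the right-hand side only changes the constant; (ii) the spectral profile bound, because $\Lambda_{2,\nu_{\bm S,\bm\alpha}}(v)$ and $\Lambda_{2,\mu_{\Tilde{\bm S},\Tilde{\bm\alpha}}}(v)$ are comparable as ratios $\mathcal E/\|f\|_2^2$ over the same admissible class, hence $\Lambda_{2,\nu_{\bm S,\bm\alpha}}(v)\preccurlyeq v^{-1/\gamma}$ follows from Corollary-type bound $\Lambda_{2,\mu_{\Tilde{\bm S},\Tilde{\bm\alpha}}}(v)\preccurlyeq v^{-1/\gamma}$ (which in turn comes from Theorem \ref{thm:zeta}); and (iii) the return probability estimate: the upper bound $\nu_{\bm S,\bm\alpha}^{(2n)}(e)\preccurlyeq n^{-\gamma}$ follows from the pseudo-Poincar\'e inequality plus the volume estimate via the Nash inequality argument (as in the proof of Theorem \ref{thm:1}, or equivalently via the spectral profile equivalence \ref{eq:lowerbound}), while the matching lower bound $\nu_{\bm S,\bm\alpha}^{(2n)}(e)\succcurlyeq n^{-\gamma}$ follows from the spectral profile lower bound $\Lambda_{2,\nu_{\bm S,\bm\alpha}}(v)\preccurlyeq v^{-1/\gamma}$ through the equivalence \ref{eq:lowerbound}.

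The only genuinely substantive input is Theorem \ref{thm: diri-comp} itself (whose proof is deferred in the paper); granting it, the corollary is a formal consequence of the comparison principle for Dirichlet forms and the already-proved Theorems \ref{thm: gamma}--\ref{thm:zeta}. Thus the main obstacle is not in this corollary but upstream, in establishing the Dirichlet-form comparison $\mathcal E_{\nu_{\bm S,\bm\alpha}}\asymp\mathcal E_{\mu_{\Tilde{\bm S},\Tilde{\bm\alpha}}}$: one direction is elementary (each coordinate move $s_i^{a_i}$ appearing in $\pi_{S_j}$ is controlled by $\mu_{\Tilde{\bm S},\Tilde{\bm\alpha}}$ via telescoping and Lemma \ref{lem: decomp}, with the tail exponents matching), while the reverse direction requires showing that the spread-out measure $\nu_{\psi_i,S_i}$, which moves by whole tuples $\pi_{S_i}(\bar a)$, still dominates each single-generator move $s^a$ in Dirichlet form — this is the analogue, at the level of Dirichlet forms rather than pointwise, of the induction carried out in the proof of Theorem \ref{thm:2}, and it is where the precise polynomial tail $\psi_i(\bar a)\asymp(1+\|\bar a\|)^{-|S_i|-\alpha_i}$ is used. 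Within the present corollary, however, no new estimate is needed; I would simply state that all four conclusions are inherited from $\mu_{\Tilde{\bm S},\Tilde{\bm\alpha}}$ through Theorem \ref{thm: diri-comp}.
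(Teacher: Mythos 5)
Your proposal is correct and matches the paper's approach: Theorem \ref{thm: diri-comp} gives $\mathcal{E}_{\nu_{\bm{S},\bm{\alpha}}}(f,f)\asymp\mathcal{E}_{\mu_{\Tilde{\bm{S}},\Tilde{\bm{\alpha}}}}(f,f)$, and the paper then invokes the argument of Corollaries \ref{cor: poly_mu_S_alpha} and \ref{cor: poly_mu_psi_S}, which is exactly the chain (Theorem \ref{thm: gamma}, Lemma \ref{lem:norm_comp}, Theorem \ref{thm:vol}, Theorem \ref{thm: PPpoly}, Theorem \ref{thm:zeta}) applied to the model measure $\mu_{\Tilde{\bm{S}},\Tilde{\bm{\alpha}}}$, followed by a transfer via the Dirichlet-form comparison, as you describe. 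Your added remarks on verifying the $(\mathcal{U}_{\Sigma_i,\alpha_i})$ and $(\mathcal{PP}_{\Sigma_i,\alpha_i})$ hypotheses for $\mu_{\Tilde{\bm{S}},\Tilde{\bm{\alpha}}}$ and on the structure of the proof of Theorem \ref{thm: diri-comp} are consistent with the paper's treatment.
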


The rest of this subsection is dedicated to proving Theorem \ref{thm: diri-comp}. To this end, we begin with a computational lemma, which can be proved using a simple integral approximation argument. 

\begin{lem}
For $m \le k$ and any fixed $(v_{m+1}, \ldots, v_k) \in \mathbb{Z}^{m-k}$
    \label{lem: sum}
    $$\sum_{(v_1, \ldots, v_m) \in \mathbb{Z}^{m}} \left(1+ \sum_{i=1}^k |v_i| \right)^{-(\alpha +k)} \asymp \left(1 + \sum_{i=m+1}^k |v_i|\right)^{-(\alpha + k -m)} $$
\end{lem}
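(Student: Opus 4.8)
The plan is to reduce the multi-dimensional sum to an iterated one-dimensional computation and then to a comparison with an integral. First I would set $w := \sum_{i=m+1}^k |v_i|$, which is the quantity held fixed, so the claim becomes $\sum_{(v_1,\dots,v_m)\in\mathbb Z^m}(1+w+\sum_{i=1}^m|v_i|)^{-(\alpha+k)}\asymp (1+w)^{-(\alpha+k-m)}$. The key structural observation is that the summand depends on $(v_1,\dots,v_m)$ only through $t:=\sum_{i=1}^m|v_i|$, so I would group terms by the value of $t$: the number of integer points $(v_1,\dots,v_m)\in\mathbb Z^m$ with $\sum|v_i|=t$ is $\asymp (1+t)^{m-1}$ (this is the surface-count of the $\ell^1$-sphere of radius $t$ in $\mathbb Z^m$; an easy induction on $m$, or a standard stars-and-bars estimate). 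Hence the sum is $\asymp \sum_{t\ge 0}(1+t)^{m-1}(1+w+t)^{-(\alpha+k)}$.

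Next I would estimate the single sum $\sum_{t\ge 0}(1+t)^{m-1}(1+w+t)^{-(\alpha+k)}$ by comparison with $\int_0^\infty (1+s)^{m-1}(1+w+s)^{-(\alpha+k)}\,ds$ (monotonicity of the relevant pieces makes the sum and integral comparable up to constants depending only on $\alpha,k,m$). Splitting the integral at $s=1+w$: on $0\le s\le 1+w$ one has $(1+s)^{m-1}\le (1+w+s)^{m-1}$ crudely but more usefully $(1+w+s)^{-(\alpha+k)}\asymp (1+w)^{-(\alpha+k)}$, giving a contribution $\asymp (1+w)^{-(\alpha+k)}\int_0^{1+w}(1+s)^{m-1}ds\asymp (1+w)^{-(\alpha+k)}(1+w)^m=(1+w)^{m-\alpha-k}$; on $s\ge 1+w$ one has $(1+s)\asymp(1+w+s)$ so the integrand is $\asymp (1+w+s)^{m-1-\alpha-k}$, and since $m-1-\alpha-k<-1$ (as $\alpha+k>m$, indeed $\alpha>0$ and $k\ge m$), this integrates to $\asymp (1+w)^{m-\alpha-k}$ as well. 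Adding the two pieces yields $\asymp (1+w)^{-(\alpha+k-m)}$, which is exactly the claim.

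The only mild subtlety — and the step I would be most careful about — is the sphere-count $\#\{(v_1,\dots,v_m)\in\mathbb Z^m:\sum_{i=1}^m|v_i|=t\}\asymp (1+t)^{m-1}$, uniformly in $t\ge 0$, together with the passage from the sum over $t$ to the integral. Neither is deep: the count is a polynomial in $t$ of degree $m-1$ with positive leading coefficient for $t\ge 1$ (and equals $1$ at $t=0$, absorbed by the constant), and the sum-to-integral comparison follows because for $t$ in a fixed range the function $t\mapsto (1+t)^{m-1}(1+w+t)^{-(\alpha+k)}$ changes by at most a bounded factor over unit steps. I do not anticipate any genuine obstacle; the result is purely a bookkeeping estimate, which is why the paper describes it as provable "using a simple integral approximation argument."
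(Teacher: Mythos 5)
Your proof is correct and follows exactly the "simple integral approximation argument" the paper alludes to without writing out (the paper gives no proof of this lemma, only the hint). Grouping by the $\ell^1$-radius $t=\sum_1^m|v_i|$, using the sphere count $\asymp(1+t)^{m-1}$, passing to the integral, and splitting at $s\asymp 1+w$ is precisely the standard route; your check that $m-1-\alpha-k<-1$ (from $\alpha>0$, $m\le k$) is the one convergence condition that matters and you handle it.

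One small remark: the paper's statement has a typo, $(v_{m+1},\ldots,v_k)\in\mathbb Z^{m-k}$ should read $\mathbb Z^{k-m}$; your argument implicitly uses the corrected version, which is the intended one.
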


\begin{proof} (of Theorem \ref{thm: diri-comp}) 
In this proof, for a $k$-dimensional vector $\bar{v}$, $\bar{v}_i$ denotes the $i$-th entry of $\bar{v}$ and $|\bar{v}| := \sum_{i=1}^k |\bar{v}_i|$ denotes the 1-norm. For simplicity, we write $\nu: =\nu_{\psi, S}$ and $\mu:= \mu_{S,\alpha}$. 

 By Lemma \ref{lem: decomp} and Lemma \ref{lem: sum}, 
    \begin{align*}
         &\mathcal{E}_{\nu}(f,f) =   \sum_{\bar{a}\in \mathbb{Z}^k} \frac{|f(g \pi(\bar{a})) - f(g)|^2 }{(1 + |\bar{a}|)^{\alpha + k}} \preccurlyeq   \sum_{j=1}^k  \sum_{\bar{a}\in \mathbb{Z}^k}
        \frac{|f(g s_j^{\bar{a}_j}) - f(g)|^2 }{(1 + |\bar{a}|)^{\alpha + k}} \\
         = & \sum_{j=1}^k  \sum_{
        \substack{ \bar{v} \in \mathbb{Z}^{k-1}\\a \in \mathbb{Z} }
     }
        \frac{|f(g s_j^{a}) - f(g)|^2 }{(1 + |a| + |\bar{v}|)^{\alpha + k}}  \asymp \sum_{j=1}^k  \sum_{
       a \in \mathbb{Z} }
        \frac{|f(g s_j^{a}) - f(g)|^2 }{(1 + |a|)^{\alpha + 1}}  = \mathcal{E}_{\mu}  (f,f)  
    \end{align*}

For the other direction of inequality, we'll prove $\mathcal{E}_{\mu_{\{s_i\},\alpha}} (f,f)  \le \mathcal{E}_{\nu } (f,f)$ by induction on $i$. Consider the base case $\mathcal{E}_{\mu_{\{s_1\},\alpha}}(f,f)  $. 
Take some $a \in \mathbb{Z}^1$ and $\bar{v} \in \{0\} \times \mathbb{Z}^{k-1}$, i.e. the collection of $k$-dimensional integer-valued vector with the first entry 0. We can write
$s_1^{a } = s_1^{2a}  \pi(\bar{v}) \cdot \pi(\bar{v})^{-1} s_1^{-a}$. Again, by Lemma \ref{lem: decomp} and Lemma \ref{lem: sum}, 
\begin{align*}
&\mathcal{E}_{\mu_{\{s_1\},\alpha}} (f,f)  
 =  \sum_{a \in \mathbb{Z}}\sum_{g\in G} |f(gs_1^{a}) - f(g)|^2  \frac{1}{(1+ |a|)^{1+\alpha}} \\
    &\asymp \sum_{a \in \mathbb{Z}}\sum_{g\in G} |f(gs_1^{a}) - f(g)|^2  \sum_{
\bar{v}  \in 0\times \mathbb{Z}^{k-1}} \frac{1}{(1 + |a| + |\bar{v}|)^{k+\alpha}} \\
   & \preccurlyeq 
    \sum_{ 
\substack{ a \in \mathbb{Z}\\\bar{v} \in 0\times \mathbb{Z}^{k-1}}}  
   \frac{\sum_{g\in G} |f(g) - f(gs_1^{2a_1}  \pi(\bar{v}))|^2}{(1 + |a| + |\bar{v}|)^{k+\alpha}} +   \sum_{ 
\substack{ a \in \mathbb{Z}\\\bar{v} \in 0\times \mathbb{Z}^{k-1}}}  
   \frac{\sum_{g\in G} |f(gs_1^{a_1}  \pi(\bar{v})) - f(g)|^2 }{(1 + |a| + |\bar{v}|)^{k+\alpha}}\\
   & \preccurlyeq \mathcal{E}_{\nu}(f,f)
\end{align*}
Now consider $1 < p \le k$. 
For any $a\in \mathbb{Z}$, and $k$-dimensional vectors $\bar{v}$ and $\bar{w}$, we can decompose 
\begin{align*}
    s_p^{a} = 
  \left( \prod_{i=p-1}^{1} s_i^{-\bar{v}_i}\right) \cdot \left(
    \pi(\bar{v}) s_p^{2a}   \pi(\bar{w}) \right) \cdot \left(\pi(\bar{w})^{-1}s_p^{-a}  \pi(\bar{v})^{-1}\right) \cdot \left(\prod_{i=1}^{p-1} s_i^{\bar{v}_i} \right)
\end{align*}

Let $V$ and $W$ denote the subspaces of $\mathbb{Z}^k$ consisting, respectively, of vectors whose last \(k-p+1\) entries and whose first \(p\) entries are zero, i.e. 
$$V:= \mathbb{Z}^{p-1} \times \{0\}^{k-p+1} \text{ and }W:= \{0\}^{p} \times \mathbb{Z}^{k-p}$$
The decomposition above and repetitive application of  Lemma \ref{lem: decomp} and Lemma \ref{lem: sum} give: 
\begin{align*}
 &    \mathcal{E}_{\mu_{\{s_p\},\alpha}}(f,f)  = \sum_{a\in \mathbb{Z}} \sum_{g\in G}   \frac{|f(gs_p^{a}) - f(g)|^2}{(1+ |a|)^{1+\alpha}}\asymp  \sum_{\substack{ a \in \mathbb{Z}\\\bar{v} \in V\\ \bar{w} \in W} } \sum_{g\in G}    
\frac{|f(gs_p^{a}) - f(g)|^2 }{ (1+ |a| + |\bar{v}| + |\bar{w}|)^{k+\alpha}}\\
&\preccurlyeq    \sum_{\substack{ a \in \mathbb{Z}\\\bar{v} \in V\\ \bar{w} \in W} }  \sum_{g\in G}    \frac{ | f(g  \pi(\bar{v}) s_p^{2a}   \pi(\bar{w}))- f(g )|^2 }{(1+ |a| + |\bar{v}| + |\bar{w}|)^{k+\alpha}}  + \sum_{\substack{ a \in \mathbb{Z}\\\bar{v} \in V\\ \bar{w} \in W} }  \sum_{g\in G}    \frac{ |f(g \pi(\bar{v}) s_p^{a}   \pi(\bar{w})) - f(g) |^2 }{(1+ |a| + |\bar{v}| + |\bar{w}|)^{k+\alpha}}\\
 &\quad  \quad +  2 \sum_{i=1}^{p-1} \sum_{\substack{ a \in \mathbb{Z}\\\bar{v} \in V\\ \bar{w} \in W} }  \sum_{g\in G}    \frac{  |f(g) - f(gs_i^{\bar{v}_i})|^2 }{(1+ |a| + |\bar{v}| + |\bar{w}|)^{k+\alpha}} \\
 & 
 \preccurlyeq 
2 \mathcal{E}_{\nu}(f,f)  + 
  2 \sum_{i=1}^{p-1} \sum_{\bar{u}\in \mathbb{Z}^k }  \sum_{g\in G}    \frac{  |f(g) - f(gs_i^{\bar{u}_i})|^2 }{(1+  |\bar{u}|)^{1+\alpha}} \preccurlyeq \mathcal{E}_{\nu}(f,f) + 2\sum_{i=1}^{p-1} \mathcal{E}_{\mu_{\{s_i\},\alpha}}(f,f) 
     \end{align*}
In the second last step, when simplifying the first two sums, we use the fact that any
$ \pi(\bar{v}) s_p^{2a}   \pi(\bar{w})$ or $ \pi(\bar{v}) s_p^{a}   \pi(\bar{w})$ appearing  in the sum is assigned by the measure $\nu$ mass comparable to $(1+ |a| + |\bar{v}| + |\bar{w}|)^{-(k+\alpha)}$. 
Finally, by the induction hypothesis, the last expression is bounded by aconstant multiple of $\mathcal{E}_{\nu}(f,f)$ as desired. 
\end{proof}

\appendix

\section{Mal'cev Basis}
\label{app: Malcev}
 Let $G$ be a finitely generated torsion-free nilpotent group. It has a descending central series 
\begin{align}
\label{series}
    G = G_1 \rhd G_2 \rhd \ldots \rhd G_{n+1} = 1
\end{align}
that is poly-infinite cyclic, i.e. $G_i/G_{i+1}$ is infinite cyclic for $1\le i\le n$. The number of infinite cyclic factors $n$ is an invariant of $G$ called the \textit{torsion-free rank} of $G$.  For each $1\le i\le n$, let $u_i$ be an element satisfying $G_i = gp(G_{i+1}, u_i)$. Then every element of $G$ can be uniquely expressed in the normal form $u_1^{\alpha_1} \ldots u_n^{\alpha_n}$ or
$u_n^{\beta_n} \ldots u_1^{\beta_1}$ where the exponents are integers. The tuple $(u_1,\ldots, u_n)$ is called a Mal'cev basis adpated to the series \ref{series}. 
\begin{exa}
\label{exa:free}
    Let $F_m/\gamma_{c+1}F_m$ be a free nilpotent group of class $c$, generated by the alphabet $X=\{x_1,\ldots,x_m\}$. A basic commutator $b_j$ of length $l$ is defined inductively as follows (see for instance \cite{CMZ})
    \begin{enumerate}
        \item The elements of $X$ are the basic commutators of length 1. We impose an arbitrary ordering on these and relabel them as $b_1,\ldots,b_k$ where $b_i < b_j$ if $i < j$.
        \item Suppose that we have defined and ordered the basic commutators of length less than $l > 1$. The basic commutators of length $l$ are $[b_i,b_j]$ where
        \begin{enumerate}
            \item $b_i$ and $b_j$ are basic commutators and $len(b_i) + len(b_j) = l$,
            \item $b_i > b_j$ and 
            \item if $b_i  = [b_s,b_t]$, $b_j \ge b_t$
        \end{enumerate}
        \item Basic commutators of length $l$ come after all basic commutators of length less than $l$ and are ordered arbitrarily with respect to one another. 
    \end{enumerate}
For instance, if we impose and ordering $x_1<x_2<x_3$ on the generators of $F_3/\gamma_4 F_3$, one of the basic sequences of basic commutators (of weight at most 3) is the following (ordered) sequence
    \begin{align*}
        &x_1,x_2,x_3,[x_2,x_1],[x_3,x_1],[x_3,x_2], \\&[[x_2,x_1],x_1],[[x_2,x_1],x_2],[[x_2,x_1],x_3],[[x_3,x_1],x_1],\\
        & [[x_3,x_1],x_2],[[x_3,x_1],x_3],[[x_3,x_2],x_2],[[x_3,x_2],x_3]. 
    \end{align*}
Any (ordered) sequence of basic commutators of length at most $c$ is a set of Mal'cev adapted to some refinement of the lower central series of $F_k/\gamma_{c+1}F_k$.
\end{exa}

\begin{exa}
\label{exa: matrix_coord}
    Take $G$ to be the dimension-4 unipotent matrix group. Let $M_{ij}$ be the matrix with all entries set to zero except for a 1 in the $(i,j)$ position and along the diagonal. Consider the descending series 
    \begin{align*}
        G     \rhd   &\langle M_{14}, M_{24}, M_{34}, M_{13},M_{23}\rangle \rhd \langle M_{14}, M_{24}, M_{34}, M_{13}\rangle \\
        & \rhd \langle M_{14}, M_{24}, M_{34}, M_{13}\rangle 
         \rhd \langle M_{14}, M_{24}, M_{34}\rangle \rhd \langle M_{14}, M_{24}\rangle \rhd \langle M_{14}\rangle \rhd \{e\}
    \end{align*}
    Simple calculation shows that it's central and is clearly polycyclic. 
The adapted Mal'cev basis 
$(M_{12}, M_{23} ,M_{13} , M_{34} , M_{24} ,M_{14} )$
gives the matrix-coordinate system \begin{align*}
    \pi_S(a_{14}, a_{24}, a_{34}, a_{13},a_{23},a_{12}) & = 
    M_{14}^{a_{14}}M_{24}^{a_{24}} M_{34}^{a_{34}} M_{13}^{a_{13}}M_{23}^{a_{23}}M_{12}^{a_{12}}\\
    &= \begin{pmatrix}
    1 & a_{12} & a_{13} & a_{14}\\
    0 & 1 & a_{23} & a_{24}\\
    0 & 0 & 1& a_{34} \\
    0 & 0 & 0 & 1
\end{pmatrix} 
\end{align*}
\end{exa}

\begin{exa}
A more canonical way to construct the central polycyclic  descending series is via the lower central series. 
Take $G:= G_n$ to be the  $n$-dimensional unipotent matrix group. 
Consider the lower central series of $G$
$$G  = \gamma_1G \ge \gamma_2 G \ge \ldots \ge \gamma_{n-1}G \ge \gamma_{n}G = \{e\}$$
For each $i=1,\ldots, n-1$, $\gamma_k G$ is exactly the subgroup generated by 
$$\{ M_{i,i+k}: i=1,\ldots, n-k\}$$
Here, $M_{i,i+k}$ is the matrix
where all entries are zero except for a 1 at the position $(i, i+k)$ and along the diagonals. Between each pair subgroups $\gamma_k G $ and $\gamma_{k+1}G$,
we construct a refinement where the factor group of any two consecutive terms is infinite-cyclic:   
\begin{align*}
    \gamma_k G  > \gamma_k G/\langle M_{n-k,n}\rangle >  &\gamma_k G/\left\langle \bigcup_{i=n-k-1}^{n-k} M_{i,i+k} \right\rangle 
  \\> & \ldots 
> G/\langle \bigcup_{i=2}^{n-k} M_{i,i+k} \rangle > \gamma_{k+1}G
\end{align*}
The (ordered) set of generators of the factor groups are taken to be 
$$\mathcal{M}_k :=( M_{1,1+k},M_{2, 2+k},\ldots, M_{n-k,n})$$
The concatenation 
\begin{align*}
    \mathcal{M} &:= \mathcal{M}_1 \sqcup \ldots \sqcup \mathcal{M}_{n-1} \\
    & = ( M_{1,2}, M_{2,3}, \ldots, M_{n-1,n}, M_{1,3},\ldots, M_{1,n-1}, M_{2,n},M_{1,n}) 
\end{align*}
preserving the specified ordering of each $\mathcal{M}_\bullet$, is a Mal'cev basis.
In the case $n = 4$, the Mal'cev basis is 
$$(M_{12}, M_{23}, M_{34}, M_{13}, M_{24}, M_{34})$$
It endows $G$ with a coordinate system distinct from the matrix-based coordinates introduced in the earlier example. 

\end{exa}

\begin{exa} Consider the group $G=\langle s_1,\dots, s_n| [s_i,s_j]=1, 1<i<j\le n, [s_1,s_j]=s_{j+1}, 1<j<n-1, [s_1,s_n]=1 \rangle$. This group is generated by $s_1,s_2$ and the tuple $(s_1,\dots,s_n)$
 is a Mal'cev basis for $G$ (associated with the obvious descending series). Any element $g$ of the roup can be written uniquely as $g=s_1^{x_1}\dots s_n^{x_n}$.
\end{exa}

Given any finitely generated torsion free nilpotent group $G$ equipped with a Mal'cev basis $(u_1,\dots,u_n)$. Pick any generating subset $S=\{u_{i_1},\dots,u_{i_k}\}$ containing $k$ elements from this basis and an $\alpha\in (0,2)$. Set \[\pi(\bar{x})=u_{i_1}^{x_1}\dots u_{i_k}^{x_k},\;
\psi(\bar{x})= \frac{c_{k,\alpha}}{(1+\|\bar{x}\|_2^2)^{(k+\alpha)/2}}, \;\;\bar{x}=(x_1,\dots x_k)\in \mathbb Z^k,\]  where $\|\bar{x}\|_2^2=\sum_{i-1}^k|x_i|^2$ and  $c_{k,\alpha}$ is the normalizing constant making $\psi$ a probability distribution on $\mathbb Z^k$.  This and (\ref{nupsi}) describe a collection of coordinate-wise stable like measure on $G$ to which the results of this paper apply.


\begin{thebibliography}{99}


\bibitem{BSCsub}
A. Bendikov and L. Saloff-Coste,
\textit{Random walks on groups and discrete subordination},
Math. Nachr. \textbf{285} (2012), no. 5-6, 580–605. MR 2902834

\bibitem{BSClow}
Alexander Bendikov and Laurent Saloff-Coste,
\textit{Random walks driven by low moment measures},
Ann. Probab. \textbf{40} (2012), no. 6, 2539–2588. MR 3050511



\bibitem{CKSWZ1}
Zhen-Qing Chen, Takashi Kumagai, Laurent Saloff-Coste, Jian Wang and Tianyi Zheng,
\textit{Long range random walks and associated geometries on
groups of polynomial growth}, 2018

\bibitem{CKSCWZ}
Zhen-Qing Chen, Takashi Kumagai, Laurent Saloff-Coste, Jian Wang and Tianyi Zheng,
\textit{Long range random walks and associated geometries on
groups of polynomial growth,},
Ann. Inst. Fourier (Grenoble) \textbf{72} (2022), no. 3,
1249–1304. MR 4485825

\bibitem{CKSCWZbook}
Zhen-Qing Chen, Takashi Kumagai, Laurent Saloff-Coste, Jian Wang and Tianyi Zheng,
\textit{Limit theorems for some long range random walks on torsion-free nilpotent groups},
SpringerBriefs in Mathematics, Springer, Cham, [2023]
\copyright 2023.

\bibitem{CMZ}
Anthony E. Clement, Stephen Majewicz and Marcos Zyman,
\textit{The theory of nilpotent groups},
Birkhäuser/Springer, Cham, 2017, MR 3729243

\bibitem{CNash}
Thierry Coulhon,
\textit{Ultracontractivity and Nash type inequalities},
. Funct.
Anal. \textbf{141} (1996), no. 2, 510–539. MR 1418518 (97j:47055)

\bibitem{Griffin1986}
P. Griffin,
\textit{Matrix Normalized Sums of Independent Identically Distributed Random Vectors},
Ann. Probab. \textbf{14, no.1} (1986), 224–246.

\bibitem{GJP}
Philip S. Griffin, Naresh C. Jain and William E. Pruitt,
\textit{Approximate local limit theorems for laws outside domains of attraction},
Ann. Probab. \textbf{12}
(1984), no. 1, 45–63. MR 723729

\bibitem{Lalley}
Steven P. Lalley, 
\textit{Random walks on infinite groups},
Graduate Texts in Math-
ematics, vol. 297, Springer, Cham, [2023] \copyright 2023. MR 4628024


\bibitem{MM}
Ross A. Maller and David M. Mason,
\textit{Matrix normalised stochastic compactness for a Lévy process at zero},
Electron. J. Probab. \textbf{23} (2018), Paper
No. 69, 37,. MR 3835475

\bibitem{MS1}
Mark M. Meerschaert and Hans-Peter Scheffler,
\textit{Limit distributions for sums of independent random vectors},
, Wiley Series in Probability and Statistics: Probability and Statistics, John Wiley \& Sons, Inc., New York, 2001,
Heavy tails in theory and practice. MR 1840531

\bibitem{PSCstab}
Ch.~Pittet and L.~Saloff-Coste,
\textit{On the stability of the behavior of random walks on groups},
J. Geom. Anal. \textbf{10} (2000), no. 4, 713–737. MR 1817783
(2002m:60012)

\bibitem{SCIso}
L.~Saloff-Coste,
\textit{Isoperimetric inequalities and decay of iterated kernels for almost-transitive Markov chains},
Combin. Probab. Comput. \textbf{4} (1995), no. 4,
419–442. MR 1377559

\bibitem{SCnotices}
Laurent Saloff-Coste,
\textit{Probability on groups: random walks and invariant diffusions},
 Notices Amer. Math. Soc. \textbf{48} (2001), no. 9, 968–977. MR
1854532 (2003g:60011)

\bibitem{SCZ-nil}
Laurent Saloff-Coste and Tianyi Zheng,
\textit{Random walks on nilpotent groups driven by measures supported on powers of generators},
 Groups Geom. Dyn.
\textbf{9} (2015), no. 4, 1047–1129. MR 3428409

\bibitem{SCZlow}
Laurent Saloff-Coste and Tianyi Zheng,
\textit{Random walks under slowly varying moment conditions on groups of polynomial volume growth},
 Ann. Fac. Sci. Toulouse Math. (6) \textbf{24} (2015),
no. 4, 837–855. MR 3434259

\bibitem{VSCC}
Nicholas Th. Varopoulos, Laurent Saloff-Coste and Thierry Coulhon,
\textit{Analysis and geometry on groups},
Cambridge Tracts in Mathematics, vol. 100,
Cambridge University Press, Cambridge, 1992. MR 1218884 (95f:43008)

\bibitem{Woess}
Wolfgang Woess,
\textit{Random walks on infinite graphs and groups},
Cambridge
Tracts in Mathematics, vol. 138, Cambridge University Press, Cambridge,
2000. MR 1743100 (2001k:60006)

\end{thebibliography}
\end{document}